\newcommand{\herm}{\mathrm{Herm}}
\newcommand{\hern}{\herm_n}
\newcommand{\Herm}{\herm}
\newcommand{\ci}{C^{\infty}}
\newcommand{\IC}{\mathbb{C}}
\newcommand{\Disc}{\mathrm{Disc}}
\newcommand{\hdrone}{H_{dR}^1}
\newcommand{\IQ}{\mathbb{Q}}
\newcommand{\OK}{\calO_{\cmfield}}
\newcommand{\calO}{\mathcal{O}}
\newcommand{\ZZ}{\mathbb{Z}}
\newcommand{\uA}{\underline{A}}
\newcommand{\uo}{\underline{\omega}}
\newcommand{\uO}{\underline{\Omega}}
\newcommand{\adeles}{\mathbb{A}}
\newcommand{\isomto}{\overset{\sim}{\rightarrow}}
\newcommand{\Isom}{\mathrm{Isom}}
\newcommand{\Gal}{\mathrm{Gal}}
\newcommand{\GL}{GL}
\newcommand{\CO}{\mathcal{O}}
\newcommand{\ord}{\mathrm{ord}}
\newcommand{\Ind}{\mathrm{Ind}}
\newcommand{\cmfield}{K}
\newcommand{\realfield}{K^+}
\newcommand{\diag}{\mathrm{diag}}
\newcommand{\IR}{\mathbb{R}}
\newcommand{\GSp}{GSp}
\newcommand{\Frob}{\mathrm{Frob}}
\newcommand{\ram}{\mathrm{ram}}
\newcommand{\Real}{\mathrm{Re}}
\newcommand{\cpct}{\mathcal{K}}
\newcommand{\Sp}{\mathrm{Sp}}
\newcommand{\GU}{GU}
\newcommand{\End}{\mathrm{End}}
\newcommand{\Res}{\mathrm{Res}}
\newcommand{\tr}{\mathrm{trace}}
\newcommand{\reflex}{E}
\newcommand{\moduli}{M}
\newcommand{\moduliint}{\mathcal{\moduli}}
\newcommand{\Spec}{\mathrm{Spec}}
\newcommand{\Sh}{\mathrm{Sh}}
\newcommand{\Mat}{\mathrm{Mat}}
\newcommand{\im}{\mathrm{Im}}
\newcommand{\re}{\mathrm{Re}}
\newcommand{\CA}{\mathcal{A}}
\newcommand{\CE}{\mathcal{E}}
\newcommand{\CH}{\mathcal{H}}
\newcommand{\Torus}{T}
\newcommand{\Borel}{B}
\newcommand{\Nilp}{N_\Borel}
\newcommand{\Levi}{H}
\renewcommand{\MR}[1]{ }
\theoremstyle{plain}
\newtheorem{thm}{Theorem}[subsection]
\newtheorem{prop}[thm]{Proposition}
\newtheorem{cor}[thm]{Corollary}
\newtheorem{lem}[thm]{Lemma}
\theoremstyle{definition}
\newtheorem{defi}[thm]{Definition}
\newtheorem{example}[thm]{Example}
\theoremstyle{remark}
\newtheorem{rmk}[thm]{Remark}
\newcommand*{\TitleFont}{%
      \usefont{T1}{cmr}{bx}{sc}%
      \fontsize{11}{15}%
      \selectfont}
\title[Automorphic forms on unitary groups]{\TitleFont \uppercase{\bf 
 Automorphic forms on unitary groups}}
\author{E. E. Eischen}\thanks{The author is grateful for support from NSF Grant DMS-1751281.
}
\address{E. Eischen\\
Department of Mathematics\\
University of Oregon\\
Fenton Hall\\
Eugene, OR 97403-1222\\
USA}
\email{eeischen@uoregon.edu}
\begin{document}
\bibliographystyle{amsalpha}

\maketitle

\vspace{-0.4in}
\begin{abstract}
This manuscript provides a more detailed treatment of the material from my lecture series at the 2022 Arizona Winter School on Automorphic Forms Beyond $\GL_2$.  The main focus of this manuscript is automorphic forms on unitary groups, with a view toward algebraicity results.  I also discuss related aspects of automorphic $L$-functions in the setting of unitary groups.
\end{abstract}

\setcounter{tocdepth}{2}

\tableofcontents

\section{Introduction}\label{sec:intro}
This manuscript 
has 
two main purposes:
\begin{enumerate}
\item{Introduce {\em automorphic forms on unitary groups} from several perspectives.}
\item{Illustrate some strategies in number theory (especially concerning algebraicity of values of automorphic $L$-functions), exploiting the fact that structures associated to unitary groups provide a convenient setting in which to work.}
\end{enumerate}

We start with some familiar examples.  Since it can be easy to get bogged down in sophisticated machinery, it is prudent to keep some familiar, yet relevant, examples in mind from the beginning.  In the mid-1700s, Leonhard Euler  proved that the values of the Riemann zeta series $\zeta(s) = \sum_{n\geq 1}n^{-s}$ at positive even integers are algebraic (rational, in fact) up to a well-defined transcendental factor.  More precisely, he proved that for each positive integer $k$,
\begin{align*}
\zeta(2k) = (-1)^{k+1}(2\pi)^{2k}\frac{B_{2k}}{2 (2k)!},
\end{align*}
where $B_{2k}\in\IQ$ is the $2k$th Bernoulli number (the $2k$th coefficient in the Taylor series expansion $
\frac{te^t}{e^t-1} = \sum_{n=0}^\infty B_n\frac{t^n}{n!}$).  
A century later, Ernst Kummer proved that far beyond merely being rational, the numbers $-\frac{B_{2k}}{2k}$ arising in the values of $\zeta(2k)$, which turn out to be the values at $1-2k$ of the analytic continuation of $\zeta(s)$, satisfy striking congruences mod powers of a prime number $p$  \cite{kummer}.  Rather than viewing these properties as a cute curiosity, Kummer was interested in information they encoded about cyclotomic fields.  Indeed, he showed that $p$ does not divide the class number of the cyclotomic field $\IQ\left(e^{2\pi i/p}\right)$ if and only if $p$ does not divide the numerators of the Bernoulli numbers $B_2, B_4, \ldots, B_{p-3}$, in which case he could prove special cases of Fermat's Last Theorem \cite{kummer36, kummer34}.   

The Riemann zeta function is the Dirichlet $L$-function attached to the trivial character, but one could also ask what happens if we replace the trivial character by, say, some nontrivial algebraic character.  In this case, once again, we obtain algebraic values at certain integer inputs, and the values have algebraic meaning.  Furthermore, in the twentieth century, these $L$-functions began to play a significant role in Galois-theoretic statements.  For example, picking up where Kummer had left off a century earlier, Kenkichi Iwasawa linked the behavior of Galois modules over towers of cyclotomic fields to $p$-adic zeta functions ($p$-adic analytic functions encoding the congruences first observed by Kummer).  It turned out that the congruences observed by Kummer encoded not only information about the sizes of class groups but also about structures of collections of class groups, viewed as Galois modules \cite{iw, iw2}.

So far, our discussion has only concerned $L$-functions attached to characters, but automorphic forms on unitary groups are already lurking nearby.  From class field theory, we have a correspondence between Hecke characters and representations of abelian Galois groups.  In fact, a Hecke character of $\adeles_L^\times$,\index{$\adeles_L^\times$ ideles of a number field $L$} the ideles of a number field $L$, is an automorphic form on $\GL_1(\adeles_L)$ or, equivalently when $L$ is a CM field, on idelic points of the {\em general unitary group $GU(1)$ of rank one}.  In other words, we have a correspondence between Galois representations of abelian extensions and automorphic forms (and of their $L$-functions), at least in this simple case.  The values of such $L$-functions can be shown to be algebraic, up to a well-determined transcendental factor.  

One of the most powerful techniques we have for proving such algebraicity results (as well as for proving analogues of Kummer's congruences) is to express the values of these $L$-functions in terms of modular forms, i.e.\ automorphic forms on $\GL_2$.  These are closely related to automorphic forms on the {\em general unitary group $GU(1, 1)$ of signature $(1, 1)$}, thanks to Isomorphism \ref{equ:GL2U11} below.
It turns out that structural aspects of modular forms are useful for proving results about these $L$-functions, including about the rationality and congruences exhibited by their values at certain points.  In the simplest example of this phenomenon, the rationality of $\zeta(1-2k)$ follows from the rationality of the Fourier coefficients in the nonconstant terms in the Fourier expansion of the Eisenstein series of weight $2k$ and level $1$,
\begin{align}\label{equ:G2k}
G_{2k}(q) = \zeta(1-2k)+2\sum_{n\geq 1}\sigma_{2k-1}(n)q^n,
\end{align}
where $q=e^{2\pi i z}$, $z$ is a point in the upper half plane, and $\sigma_{2k-1}(n) =\sum_{d\divides n}d^{2k-1}$.  This is the simplest implementation of an idea of Erich Hecke (that was later fleshed out by Helmut Klingen and Carl Siegel,
as explained in \cite[Section 1.3]{BCG} and \cite{IO}) to study algebraicity of values of zeta functions by exploiting properties of Fourier coefficients of modular forms \cite{klingen, siegel1, siegel2}.  

Moving a step further, we could investigate properties of the values of a {\em Rankin--Selberg convolution} of a weight $k$ holomorphic cusp form $f(q) = \sum_{n\geq 1}a_n q^n$ that we assume to be primitive (i.e.\ $a_1=1$ and $f$ is a common eigenfunction of the Hecke operators of level $N$) and a weight $\ell< k$ holomorphic modular form $g(q) = \sum_{n\geq 0}b_n q^n$ with algebraic Fourier coefficients $a_n$ and $b_n$ for all $n$.  That is, we consider the zeta series
\begin{align*}
D(s, f, g) = \sum_{n=1}^\infty \frac{a_n b_n}{n^s}.
\end{align*}
Goro Shimura proved that for all integers $k, \ell, m$ satisfying $\ell<k$ and $\frac{k+\ell-2}{2}<m<k$,
 \begin{align}\label{equ:Dmfgalg}
 \frac{D(m, f, g)}{\langle f, f\rangle}\in \pi^{k}\bar{\IQ},
 \end{align}
 where $\bar{\IQ}$ denotes an algebraic closure of $\IQ$ and $\langle, \rangle$ denotes the Petersson inner product \cite[Theorem 3]{shimura-RS}. 
 A key step in proving Expression \eqref{equ:Dmfgalg} is to express the values of $D(s, f, g)$ as a Petersson inner product involving an Eisenstein series.  More precisely, Shimura and Robert Rankin proved in \cite{shimura-RS, rankin} that 
\begin{align}\label{equ:RSconvmf}
D(k-1-r, f, g) = c\pi^k\langle\tilde{f}, g\delta_\lambda^{(r)} E\rangle,
\end{align}
 where $c=\frac{\Gamma(k-\ell-2r)}{\Gamma(k-1-r)\Gamma(k-\ell-r)}\frac{(-1)^r 4^{k-1}N}{3}\prod_{p\divides N}\left(1+p^{-1}\right)$ (with $N$ the level of the modular forms), $E$ denotes a particular holomorphic Eisenstein series of weight $\lambda:=k-\ell-2r$ and level $N$, $\tilde{f}(z):=\overline{f\left(-\bar{z}\right)},$ and $\delta_\lambda^{(r)}$ is a {\em Maass--Shimura} differential operator that raises the weight of a modular form of weight $\lambda$ by $2r$.  In fact, $\delta_\lambda^{(r)} E$ is also an Eisenstein series but, unlike the Eisenstein series we have encountered so far in this manuscript, is not holomorphic (although it is {\em nearly holomorphic}).  Expression \eqref{equ:Dmfgalg} then follows from the algebraicity of the Fourier coefficients of the Eisenstein series $E$, the fact that the Maass--Shimura operator preserves certain properties of algebraicity, and the decomposition over $\bar{\IQ}$ of the space of level $N$ modular forms of weight $k$ into an orthogonal basis of cusp forms (so the pairing in Equation \eqref{equ:RSconvmf} becomes a scalar multiple of $\langle f,  f\rangle$).  In Section \ref{autLfcnssection}, we will see a vast extension of this idea of expressing $L$-functions in terms of Eisenstein series (and other automorphic forms) to glean information about rationality properties of the values of $L$-functions.

Much more broadly, one might ask about analogous behavior for $L$-functions associated to other arithmetic data.  In the 1970s, Pierre Deligne formulated vast conjectures about certain values of $L$-functions at integer points \cite{deligne}.  His conjectures concern $L$-functions attached to {\em motives} $M$, which include $L$-functions attached to algebraic Hecke characters (i.e.\ the example mentioned above) and to holomorphic modular forms (the next natural example to consider, given their connection with $2$-dimensional Galois representations).  Roughly speaking, he conjectured that if an integer $m$ is {\em critical} for the motive $M$, then $L(m, M)$ is a rational multiple of a {\em period} 
associated to $M$.  The details of how to formulate our aforementioned results about Artin $L$-functions in this setting are the subject of \cite[Section 6]{deligne}.  More generally, though, this conjecture extends well beyond the low-dimensional examples discussed so far.  In parallel with the case mentioned above, Ralph Greenberg also extended Iwasawa's conjectures (concerning $p$-adic aspects) to this more general setting \cite{green3, green2, green1}.  Meanwhile, the Bloch--Kato conjectures predict the meaning of the order of vanishing at $s=0$ of $L(s, M)$ \cite{BK}.

Given that we just named three significant sets of conjectures about $L$-functions, it is natural to seek tools to prove them.  At present, our main route is through automorphic forms and automorphic representations (generated by automorphic forms).  That is, rather than dive in and prove such conjectures directly for the data to which the $L$-functions are attached, the most fruitful approach to date has been to work with automorphic forms associated to that data.  In our familiar examples above, we saw that automorphic forms on $\GL_1$ and $\GL_2$ played key roles.  We also noted, though, that we could instead view these automorphic forms as being defined on unitary groups.  
In fact, it turns out that the relative simplicity of the above cases (relative to aspects of the {\em higher-dimensional situation}, which is absolutely not a suggestion that the already sophisticated $1$- and $2$-dimensional cases should be considered simple!) obscures the fact that unitary groups can be more effective than $\GL_n$ (the group with which most people would probably be inclined to start) for extending certain techniques from dimensions $1$ and $2$ to higher-dimensional settings.

\subsection{Why work with unitary groups?}
Unitary groups form a particularly convenient class of groups with which to work, due to certain algebraic and geometric properties.  In particular, in analogue with the case of modular curves in the setting of modular forms, unitary groups have associated moduli spaces with integral models.  This enables us to study algebraic aspects of automorphic forms (which arise as sections of vector bundles over Shimura varieties, in analogue with modular forms that arise as sections of a line bundle over modular curves).  While the locally symmetric spaces for $\GL_n$ for $n\geq 3$ lack the structure of Shimura varieties, systems of Hecke eigenvalues for $\GL_n$ can be realized in the cohomology of unitary Shimura varieties \cite{clozel, scholze, HLTT, sugwoo}.  Related to this, we also have substantial additional results about Galois representations in this setting (e.g.\ \cite{skinnerGalois, ch, ch2, chenevier-harris, harris-takagi}), which enable us to study $L$-functions of Galois representations by instead studying $L$-functions of certain {\em cuspidal} automorphic representations.

In addition, thanks to representation-theoretic properties of unitary groups, we have convenient models for the $L$-functions associated to certain automorphic representations of unitary groups.  These models are useful both for proving analytic properties and for extracting algebraic information (and even $p$-adic properties, as seen in \cite{HELS}).  In fact, additional automorphic forms ({\em Eisenstein series}, of which the function $G_{2k}$ from Equation \eqref{equ:G2k} is the simplest case) come into play in the study of these $L$-functions, so that we can eventually turn questions about $L$-functions into questions about properties of automorphic forms. 

Working with unitary groups has enabled major developments (which go beyond the scope of this manuscript but several of which are mentioned here as motivation for studying automorphic forms on unitary groups), including a proof of the main conjecture of Iwasawa Theory for $\GL_2$ \cite{SkUr} and the rationality of certain values of automorphic $L$-functions (including \cite{shar, harriscrelle, harrisbirkhauser, harrisannals, guerberoff, guerberofflin}), as well as progress toward cases of the Bloch--Kato conjecture (including \cite{SkiUrb, klosin1, klosin2, wan2019iwasawa}), and the Gan--Gross--Prasad conjecture (many recent developments, including \cite{xue1, xue2, weizhang, yifengliu, zhiweiyun, jiangzhang, hongyuhe, bp1, bp2}). 

Because of their well-established power but also because many challenges remain, automorphic forms on unitary groups continue to play a significant role in research in number theory.  Current research spans a variety of topics.  In addition to applications to sophisticated problems like those mentioned above, there are also fundamental challenges associated with studying unitary Shimura varieties themselves (which arise independently of the automorphic theory but substantially impact the automorphic theory and associated $L$-functions).  While unitary groups are related to symplectic groups and there is overlap in approaches to the two classes of groups, unitary groups generally present more challenges (including in terms of geometry, e.g.\ as documented in \cite{RSZ} and \cite{EiMa}) than one sees in the symplectic setting.  At a more basic-sounding level, there is still much progress to be made even for computing examples on low-rank unitary groups (although some progress has been made under certain conditions, e.g.\ in \cite{loeffler, bwilliams}).

Given all this, this manuscript focuses
 on the following topics:
\begin{itemize}
\item{Introduction to automorphic forms on unitary groups from several perspectives (including analytic, algebraic, and geometric), as well as connections between those perspectives}
\item{Aspects of $L$-functions associated to certain automorphic representations of unitary groups, with a view toward algebraicity results}
\item{Constructions of examples of automorphic forms on unitary groups}
\end{itemize}

\subsubsection{Why present this particular set of topics (and not others) here?}\label{sec:whythesetopics}
One could fill hundreds of pages discussing automorphic forms and their roles (even if one restricts to unitary groups) and still have a lot of ground left to cover.  For this manuscript, the author has aimed to select a cohesive and relevant collection of topics that give readers a taste of some fundamental parts of this area while also meeting three key criteria: 
\begin{enumerate}
\item{These topics have arisen repeatedly in recent research developments, including those carried out by this manuscript's author.}
\item{These topics are closely tied to questions that the author gets asked.}
\item{These topics are appropriate for graduate students and others getting started in this area.}
\end{enumerate}
To assist readers who hope to explore further, this manuscript cites many papers that go into further details about specific topics.  There are also a number of excellent, more extensive resources (such as books) that cover a lot of related material on automorphic forms, and readers are also encouraged to delve into those resources, for example \cite{getz, bellaiche, bumpbook, automorphicproject, shar, parisbookproject}.

Given the three key criteria here, those familiar with the author's research might wonder why the manuscript barely mentions $p$-adic automorphic forms and $p$-adic methods.  While $p$-adic methods meet (at least) the first two key criteria, it is imperative to understand the basics of analytic and algebraic aspects of automorphic forms on unitary groups before moving on to the $p$-adic setting.  An additional prerequisite for successfully working with $p$-adic automorphic forms is a strong understanding of $p$-adic modular forms (the $\GL_2$ setting), which is at odds with the focus of this workshop (namely, the {\em beyond} $GL_2$ setting).  In fact, as discussed in \cite[Section 1]{bordeaux}, all known constructions of $p$-adic $L$-functions appear to be adaptations of the specific techniques employed in the proof of the algebraicity of the values of the corresponding $L$-function.  (For example, Serre's approach to proving Kummer's congruences and constructing $p$-adic zeta functions in \cite{serre} builds directly on the ideas of Hecke, Klingen, and Siegel employing the constant terms of Eisenstein series summarized above.  For additional examples, see Remark \ref{rmk:padicrecipe}.) So this manuscript covers a subset of the prerequisites necessary for embarking on a study of $p$-adic methods for automorphic forms on unitary groups.  

One advantage of several of the topics presented here is that the broad strategies or ideas here might be transferrable to other groups, even if the technical details of how to carry out those strategies are specific to the group at hand.  For example, the recipe for proving algebraicity of critical values of $L$-functions employed by Michael Harris in the setting of unitary groups (including in \cite{harriscrelle, harrisbirkhauser}) has also been successfully adapted to certain other cases.  Indeed, his strategy is an extension of the one introduced by Shimura for proving algebraicity of critical values of Rankin--Selberg convolutions of modular forms (on $\GL_2$) mentioned above.  Technical ingredients (like Shimura varieties) that play a crucial role in Harris's work are not necessarily available for other groups, but nevertheless, the recipe can be inspiring for how to proceed in other settings.  To give a specific example, note that the overall strategy for proving algebraicity of critical values of Spin $L$-functions for $\GSp_6$ in \cite{ERS} is an (admittedly substantial, when it comes to certain technical issues) adaptation of Harris's (and Shimura's) approach, even though this setting lacks what appear at first to be crucial ingredients from the unitary setting. 
Thus, while a primary goal is to prepare readers to work with automorphic forms on unitary groups, a significant secondary objective is to help readers gain some intuition that might apply more broadly.

\subsection{Who is this manuscript for?}
This manuscript is for graduate students and others who are getting starting started doing research involving automorphic forms on unitary groups.  It introduces aspects of some core topics that can serve as a launchpad for mathematicians hoping to explore further.  The manuscript is especially aimed at those looking to understand connections between algebraic, geometric, and analytic aspects of automorphic forms on unitary groups.

Deservedly, there has been much recent attention on the setting of unitary groups.  Questions I have received suggest, though, that there is substantial demand for an accessible entry point, as it can be challenging to enter such a developed and dynamic field.  A key aim of this manuscript is to provide a welcoming entrance to some of the extensive work in this area.  Exercises related to the material presented here can be found in the problem sets prepared by Lynnelle Ye.

\subsubsection{Recommended material to learn first}
This manuscript is written for readers who are already familiar with automorphic forms on $\GL_2$, i.e.\ modular (and Hilbert modular) forms.  You will get the most out of this manuscript if you already know about modular forms and some of their uses in various settings.  This includes the classical analytic perspective (e.g.\ as in \cite[Chapter VII]{serrebook} and \cite[Chapter III]{koblitzbook}), the algebraic geometric perspective (e.g.\ as in \cite[Section 1]{katzmodular} and \cite[Chapters 1 and 2]{gorenbook}), and the automorphic or representation-theoretic perspective (e.g.\ as in \cite{gelbartbook} and \cite{bumpbook}).  Not knowing {\em some} of this background material will not be problematic, but not knowing {\em most} of it will make it difficult to have the intuition necessary to follow portions of the material presented here.  If you find that you need to develop your understanding of the $\GL_2$ setting further before proceeding, there are also other excellent resources for learning the fundamentals of modular forms, including \cite{miyakebook, DSbook, diamondimbook, 123book, steinbook, ribetsteinbook} and the online resources from the {\em Arizona Winter Semester 2021: Virtual School in Number Theory}.\footnote{\url{https://www.math.arizona.edu/~swc/aws/2021/index.html}}  

If you know the $\GL_2$ case well, then you will have the intuition necessary to pause periodically and think about how the material presented here specializes to the $n=1$ case.  Especially when you are stuck, it can be useful to specialize to the setting of modular forms to see what insight that more familiar setting offers.  Sometimes this will be particularly helpful (for instance, when studying algebraicity of values of $L$-functions, which follows from an extension of Shimura's approach in the setting of modular forms).  Even in cases where the $n=1$ case is too simple to offer much insight into the case of higher rank groups (e.g.\ singular forms are just constant functions in the $n=1$ case), it can still offer a helpful reality check along the way.  What might at first look like abstract formalities or a mess of heavy notation will often instead become a natural extension of what you already know from the setting of modular forms for $\GL_2$.

\subsection{Acknowledgements}
This manuscript would not be possible without the expertise and perspective I gained while working with my collaborators on research projects concerning automorphic forms on unitary groups.  In particular, I would like to thank Ana Caraiani, Jessica Fintzen, Maria Fox, Alex Ghitza, Michael Harris, Jian-Shu Li, Zheng Liu, Elena Mantovan, Angus McAndrew, Chris Skinner, Ila Varma, and Xin Wan.  Conversations I had with each of these collaborators substantially shaped my understanding of automorphic forms on unitary groups.  I am especially grateful to Mantovan for her insightful answers to my questions about geometry, which have clarified my understanding of some of the geometric aspects of the material presented here.  I am also especially grateful to Skinner and Harris, as well as my postdoctoral mentor Matthew Emerton, for helping me get started in this area and patiently answering my questions.  I hope that this manuscript serves as a resource for some of their (and others') future students (and perhaps saves them from some of the tedious sort of questioning to which I subjected them when I did not have access to such a written resource).

I am grateful to the people who provided feedback on portions
of earlier versions of this manuscript, including Utkarsh Agrawal, Francis Dunn, Bence Forr\'as, Maria Fox (who also was the discussion leader), Sean Haight, Andy Huchala, Angus McAndrew, Phil Moore, Yogesh More, Sam Mundy (who also was the project assistant), Samantha Platt, Wojtek Wawr\'ow, Pan Yan, Lynnelle Ye (who also wrote accompanying problem sets), and the anonymous referees.  
I am also grateful to the organizers of the Arizona Winter School (Alina Bucur, Bryden Cais, Brandon Levin, Mirela Ciperiani, Hang Xue, and  David Zureick-Brown) for organizing the excellent workshop and inviting me to be a lecturer.

\section{Unitary groups and PEL data}
Before introducing automorphic forms on unitary groups, it is prudent to establish some basic information about unitary groups and PEL data.  This section includes properties of associated moduli spaces and certain representations that will occur in our definitions of automorphic forms in Section \ref{autformsunitarygpssection}.

\subsection{A first glance at unitary groups}\label{unitarygpsbasicsection}
Readers who are already familiar with unitary groups are encouraged to skip this short introduction to unitary groups and start with Section \ref{PELdatasection}, where we associate a unitary group to PEL data.  This section briefly introduces unitary groups to make sure all readers, including beginners, start off having seen at least basic definitions of the groups with which we work.

Let $\cmfield$ be a quadratic imaginary extension of a totally real field $\realfield$.  Let $V$ be an $n$-dimensional vector space over $\cmfield$, and let $\langle, \rangle$ be a nondegenerate $\cmfield$-valued Hermitian pairing on $V$, i.e.\ $\langle, \rangle: V\times V\rightarrow \cmfield$ is linear in the first variable and conjugate-linear in the second variable, and
\begin{align*}
\langle v, w\rangle = \overline{\langle w, v\rangle}
\end{align*}
for all $v$ and $w$ in $V$.  Note that we can linearly extend any such Hermitian pairing to a $\realfield$-algebra $S$ and the $S$-module $V\otimes_{\realfield}S$.

\begin{defi}\label{unitarygpdefn}
The {\em unitary group} associated to $\left(V, \langle, \rangle\right)$ is the algebraic group $U:=U(V, \langle, \rangle)$\index{$U(V, \langle, \rangle)$}\index{$U$} whose $S$-points, for each $\realfield$-algebra $S$, are given by
\begin{align*}
U(S) = \left\{g\in \GL_{\cmfield\otimes_{\realfield}S}(V\otimes_{\realfield} S) \mid \langle gv, gw\rangle = \langle v, w\rangle\right\}.
\end{align*}
 \end{defi}
 This is a group scheme defined over $\realfield$.
 Given $g\in U(S)$, we define $g^\dag$
 to be the unique element of $U(S)$ such that
 \begin{align*}
 \langle gv, w\rangle = \langle v, g^\dag w\rangle
 \end{align*}
 for all $v, w\in V.$
Note that since $\langle, \rangle$ is nondegenerate, we also have that $gg^\dag$ is the identity element in $U$, and
\begin{align*}
g^{\dag\dag} = g.
\end{align*}
(We have $\langle v, g^{\dag\dag}w\rangle = \langle g^\dag v, w\rangle = \overline{\langle w, g^\dag v\rangle} = \overline{\langle gw, v\rangle} = \langle v, gw\rangle$.  The fact that $\langle, \rangle$ is nondegenerate then shows $g = g^{\dag\dag}$.)

\begin{defi}
The subgroup $SU:=SU(V, \langle, \rangle)$\index{$SU(V, \langle, \rangle)$} of the unitary group $U$ with determinant $1$ is called a {\em special unitary group}.  (It is a subgroup of the special linear group.)
\end{defi}

The unitary group $U$ is a subgroup the group of unitary similitudes defined as follows.
\begin{defi}
The {\em general unitary group} or {\em unitary similitude group} or {\em group of unitary similitudes} associated to $\left(V, \langle, \rangle\right)$ is the algebraic group $\GU:=GU(V, \langle, \rangle)$\index{$GU(V, \langle, \rangle)$} whose $S$-points, for each $\realfield$-algebra $S$, are given by
\begin{align*}
 \GU(S) = \left\{g\in \GL_{\cmfield\otimes_{\realfield}S}(V\otimes_{\realfield} S) \mid \langle gv, gw\rangle = \nu(g) \langle v, w\rangle, \nu(g)\in \GL_1(S)\right\}.
 \end{align*}
 The homomorphism $\nu: \GU\rightarrow \GL_1$ is called a {\em similitude factor}.  
\end{defi}

We have an exact sequence
\begin{align*}
1\rightarrow U\rightarrow \GU\xrightarrow{g\mapsto \nu(g)} \GL_1 \rightarrow 1.
\end{align*}
Sometimes (e.g.\ when realizing $GU$ as an algebraic group), it is convenient to identify $GU$ with the group of tuples
\begin{align*}
\left\{(g, \nu(g))\in \GL_{\cmfield\otimes_{\realfield}S}(V\otimes_{\realfield} S)\times GL_1(S) \mid \langle gv, gw\rangle = \nu(g) \langle v, w\rangle\right\}.
\end{align*}

If we choose an ordered basis $v_1, \ldots, v_n$ for $V$, then we may identify $\langle, \rangle$ with an $n\times n$-matrix $A$ with coefficients in $S$ and $V$ with $S^n$ (via $v_i\mapsto e_i$ with $e_i$ the $i$-th standard basis vector, viewed as a column vector for the moment), via
\begin{align*}
\langle  v_i,  v_j\rangle = {}^t e_i A e_j.
\end{align*}
Note that our convention in this manuscript is always to write the superscript $t$ on the left side of the matrix of which we are taking the transpose.\index{${ }^t$ superscript on lefthand side of matrix denotes transpose of that matrix, i.e.\ ${ }^tA$ denotes the transpose of a matrix $A$}

\begin{rmk}\label{rmk:preservemx}
If $S=\IR$, then an ordered choice of basis identifies $V\otimes_{\realfield}\IR$ with $\IC^n$, viewed for the moment as row vectors.  If $\langle, \rangle$ is a Hermitian pairing on $V$, then there is a Hermitian matrix $A$ (i.e.\ $A = A^\ast:={ }^t\bar{A}$, where the lefthand superscript ${ }^t$ denotes the transpose and $\bar{ }$ denotes the complex conjugate\index{$^\ast$ in the righthand superscript of a matrix denotes the transpose conjugate of that matrix, i.e.\ $A^\ast = { }^t\bar{A}$ for a matrix $A$}) such that
\begin{align*}
\langle v, w\rangle =v A w^\ast
\end{align*}
for all $v, w\in V$.  
 After a change of basis, the Hermitian matrix corresponding to the nondegenerate pairing $\langle, \rangle$ can be written in the form
\begin{align*}\index{$I_{a, b}$}
I_{a, b}:=\begin{pmatrix}
1_a & 0\\
0 & -1_b
\end{pmatrix},
\end{align*}
with $a+b = n$.  The tuple $(a, b)$ is called the {\em signature} of $\langle, \rangle$.  If $ab=0$, we say the unitary group preserving $\langle, \rangle$ is {\em definite}.

When $a=b$, we can also choose a basis with respect to which the matrix corresponding to $\langle, \rangle$ is $i\eta$, where
\begin{align*}\index{$\eta=\eta_a$}
\eta:=\eta_a:=\begin{pmatrix}0&-1_a\\ 1_a & 0\end{pmatrix}.
\end{align*}

When the group under consideration has signature $(a, b)$, it is conventional to write $SU(a, b)$,\index{$SU(a, b)$} $U(a, b)$,\index{$U(a, b)$} or $\GU(a, b)$.\index{$GU(a, b)$}  Sometimes, this notation is reserved for the group of matrices preserving $I_{a, b}$ if $a\neq b$ and $\eta_n$ if $a=b$, and this is the convention we will employ going forward.  It is also conventional to write $U(A)$\index{$U(A)$ where $A$ denotes a matrix} for the matrix group preserving a Hermitian matrix $A$ (and $SU(A)$ for the subgroup of matrices of determinant $1$ and $GU(A)$ for the corresponding similitude group).

In Section \ref{PELdatasection}, we will introduce groups $G$ and $G_1$ that are defined over $\IQ$ and are closely related to $\Res_{\realfield/\IQ}GU$ and $\Res_{\realfield/\IQ}U$, respectively.  (Given a group $H$ defined over $\realfield$, $\Res_{\realfield/\IQ}$\index{$\Res_{\realfield/\IQ}$} denotes the restriction of scalars functor, i.e.\ $\left(\Res_{\realfield/\IQ}H\right)(S) := H(S\otimes_\IQ\realfield)$ for each $\IQ$-algebra $S$.)  Note that 
\begin{align*}
\left(\Res_{\realfield/\IQ}GU\right)(\IR) &\cong \prod_{\tau: \realfield\hookrightarrow \IC}GU\left(a_\tau, b_\tau\right)\\
\left(\Res_{\realfield/\IQ}U\right)(\IR) &\cong \prod_{\tau: \realfield\hookrightarrow \IC}U\left(a_\tau, b_\tau\right).
\end{align*} 
In this case, the signature is the tuple $(a_\tau, b_\tau)_\tau$.  Given such a tuple $(a, b)$, we write $U_{a, b}(\IR)$\index{$U_{a, b}(\IR)$} for $\prod_{\tau}U(a_\tau, b_\tau)$ (and similarly for $SU_{a, b}$\index{$SU_{a, b}(\IR)$} and $GU_{a, b}$\index{$GU_{a, b}(\IR)$}).  If the unitary group is definite of signature $(n, 0)$ or $(0, n)$, we often write $n$ in place of the pair $(n,0)$ or $(0, n)$.
\end{rmk}

\subsubsection{A close relationship between $GL_2$ and $GU(1,1)$}\label{sec:GL2U11}
In Section \ref{sec:intro}, we noted that automorphic forms on $GU(1, 1)$ and $GL_2$ are closely related.  Later, we will see that the symmetric space for $GU(1, 1)$ is a finite set of copies of the upper half plane (the symmetric space for $GL_2$).  There is also a close relationship between automorphic forms on $GU(1, 1)$ and modular forms.  The relationship stems from the isomorphism
\begin{align}\label{equ:GL2U11}
GU(1, 1)\cong \left(\GL_2\times\Res_{\cmfield/\realfield}\mathbb{G}_m\right)/\mathbb{G}_m,
\end{align}
where $\mathbb{G}_m$ is embedded as $\alpha\mapsto (\diag(\alpha, \alpha), \alpha^{-1})$ and, following the usual convention, $\mathbb{G}_m$\index{$\mathbb{G}_m$} denotes the multiplicative group.

\subsubsection{Unitary groups over local fields}
Above, we defined the signature of a unitary group associated to a Hermitian pairing $\langle, \rangle$ on a $\IC$-vector space.  More generally, we will encounter $V_v:=V\otimes_{\realfield}\realfield_v$, where $v$ is a finite place of $\realfield$.  If $v$ splits as $w\bar{w}$ in $\cmfield$, then the decomposition
\begin{align*}
\cmfield\otimes_{\realfield}\realfield_v\cong \cmfield_w\oplus\cmfield_{\bar{w}}.
\end{align*}
induces a decomposition $V_v = V_w\oplus V_{\bar{w}}$.  Note that the nontrivial element of $\Gal(\cmfield/\realfield)$ swaps the two summands in the direct sum here, and $U(\realfield_v)$ fixes each summand.  Furthermore, still assuming that $v$ splits as $w\bar{w}$ in $\cmfield$, we get isomorphisms
\begin{align}\label{UGLniso}
U(\realfield_v)\cong \GL_n(\cmfield_w)\cong \GL_n(\realfield_v).
\end{align}
On the other hand, in the case where $v$ is inert, $\cmfield_w/\realfield_v$ is a quadratic extension of $p$-adic fields (for $v$ a prime over $(p)$), in which case the structure is described in, e.g.\, \cite[Section 1]{harris-unitary}.

\subsection{PEL data}\label{PELdatasection}

In this section, we introduce {\em data of PEL type}, along with corresponding moduli spaces.  In analogue with the setting of modular forms, which can be viewed as sections of a line bundle over a modular curve (a moduli space of elliptic curves with additional structure), we will later define automorphic forms as sections of a vector bundle over a higher-dimensional generalization of the modular curve (namely Shimura varieties, which serve as moduli spaces for certain abelian varieties with additional structure).  As we shall see in Section \ref{sec:moduliproblem}, the data of PEL type introduced below correspond to a moduli problem.  

In analogue with the moduli problem of classifying pairs consisting of an elliptic curve and a level structure, our moduli problem concerns tuples of abelian varieties with not only a level structure but also additional structures (polarization and endomorphism).  Similarly to the case of modular forms, realization of our automorphic forms in terms of algebraic geometry will enable us to work over base rings (and schemes) beyond just $\IC$.  This is essential for considering questions about algebraicity or rationality.

For an excellent introduction to Shimura varieties, the reader is encouraged to consult \cite{lan}.  Note that because we are concerned with unitary groups in this manuscript, we have specialized from the beginning to PEL data that will correspond to unitary Shimura varieties.
For detailed references specific to the case of unitary groups, 
the reader is encouraged to consult \cite[Section 5]{kottwitz} and \cite[Section 5.1]{lan}.

We consider tuples $\mathfrak{D}:=(D, \ast, \CO_D, V, \langle, \rangle, L, h)$\index{$\mathfrak{D}$} consisting of:
\begin{itemize}
\item{A finite-dimensional semisimple $\IQ$-algebra $D$\index{$D$}, each of whose simple factors has center CM field $\cmfield$\index{$\cmfield$}}
\item{A positive involution $\ast$\index{$\ast$} on $D$ over $\IQ$, by which we mean an anti-automorphism (i.e.\ reverses the order of multiplication in $D$) of order $2$ such that $\tr_{D\otimes_\IQ \IR}(x x^\ast)>0$ for all nonzero $x\in D\otimes_\IQ\IR$}
\item{A $\ast$-stable $\ZZ$-order $\CO_D$\index{$\CO_D$} in $D$}
\item{A nonzero finitely generated left $D$-module $V$\index{$V$}}
\item{A nondegenerate $\IQ$-valued alternating form $\langle, \rangle$\index{$\langle, \rangle$} on $V$ such that $\langle bv, w\rangle = \langle v, b^\ast w \rangle$ for all $b\in D$ and $v,w\in V$}
\item{A lattice $L\subseteq V$ preserved by $\CO_D$ that is self-dual with respect to $\langle, \rangle$.}
\item{A $\ast$-homomorphism $h:\IC\rightarrow \End_{D\otimes_\IQ\IR}(V\otimes_\IQ\IR)$\index{$h$} 
 (and by {\em $\ast$-homomorphism}, we mean an $\IR$-algebra homomorphism such that $h(z)^\ast  = h(\bar{z})$ for all $z\in\IC$), such that the symmetric $\IR$-linear bilinear form $\langle \cdot, h(i) \cdot \rangle$ on $V_\IR:=V\otimes_\IQ\IR$\index{$V_\IR$} is positive definite}
\end{itemize}
\begin{rmk}
To see that the pairing $(\cdot, \cdot):=\langle \cdot, h(i) \cdot\rangle$ is symmetric, observe that for all $v, w\in V$,
\begin{align*}
(v, w):=\langle v, h(i) w\rangle=\langle h(-i) v, w\rangle = -\langle h(i)v, w\rangle = \langle w, h(i) v\rangle = (w, v).
\end{align*}
\end{rmk}
\begin{rmk}
We view $K$ as embedded in $D$ via the diagonal embedding of $K$ into each of the simple factors of $D$.
\end{rmk}

Such a tuple $\mathfrak{D}$ is called a {\em PEL type datum} or {\em a datum of PEL type}.  This is the setup given in \cite[Section 5]{kottwitz}.  (N.B.\ Because we have specialized to the unitary case and so are, for example, excluding the Siegel case, we have required that $\cmfield$ be a CM field and not a totally real field.)
One can also consider an {\em integral PEL datum}, as in, for example, \cite[Section 5.1.1]{lan}.  To distinguish the previous case from the integral case, we sometimes call the previous case a {\em rational} PEL datum.  In the integral case, we consider tuples $(\CO, \ast, L, \langle, \rangle, h)$ consisting of:
\begin{itemize}
\item{An order $\CO$ in a finite-dimensional semisimple $\IQ$-algebra $D$}
\item{A positive involution $\ast$ on $\CO$}
\item{A $\CO$-module $L$ that is finitely generated as a $\ZZ$-module}
\item{A nondegenerate alternating pairing $\langle, \rangle: L\times L\rightarrow 2\pi i \ZZ$ such that $\langle bv, w\rangle = \langle v, b^\ast w \rangle$ for all $b\in \CO$ and $v,w\in L$}
\item{A $\ast$-homomorphism $h:\IC\rightarrow \End_{\CO\otimes_\ZZ\IR}\left(L\otimes_\ZZ\IR\right)$ such that the symmetric $\IR$-linear bilinear form $(2\pi i)^{-1}\langle \cdot, h(i) \cdot \rangle$ on $L\otimes_\ZZ\IR$ is positive definite.}
\end{itemize}

To our integral PEL datum, following \cite[Section 5.1.3]{lan}, we attach a group scheme $G$ over $\ZZ$ whose $R$-points, for each ring $R$, are given by
\begin{align}\label{GofR}\index{$G$}
G(R):=\left\{(g, r)\in \End_{\CO\otimes_\ZZ R}(L\otimes_\ZZ R)\times R^\times \mid \langle gv, gw\rangle = r\langle v, w\rangle \mbox{ for all } v, w\in L\otimes_\ZZ R\right\}.
\end{align}

\begin{rmk}
Note that from an integral PEL type datum, one obtains a (rational) PEL datum as above by tensoring the integral data with $\IQ$ (and renormalizing the bilinear forms).  As explained in \cite[Section 5.1.1]{lan}, in the moduli problem that we will discuss below, rational PEL data are most naturally suited to working with isogeny classes of abelian varieties, while integral PEL data are most readily suited to the language of isomorphism classes of abelian varieties.
\end{rmk}

Let $\realfield$\index{$\realfield$} be the fixed field of $\ast$.  Then $\cmfield/\realfield$ is a quadratic imaginary extension.  Let
\begin{align*}
n=\dim_\cmfield V.\index{$n$}
\end{align*}
Note that $V_\IC:=V_\IR\otimes_\IR\IC$\index{$V_\IC$} decomposes as
\begin{align*}\index{$V_1$}\index{$V_2$}
V_\IC = V_1\oplus V_2,
\end{align*}
where $V_1$ is the submodule on which $h(z)$ (more precisely, $h(z)\times 1$) acts by $z$ and $V_2$ is the submodule on which $h(z)$ (more precisely, $h(z)\times 1$) acts by $\overline{z}$.  The {\em reflex field} $\reflex$\index{$\reflex$} of $\mathfrak{D}$ is defined to be the field of definition of the isomorphism class of the $\IC$-representation $V_1$ of $D$.  For more details about how to view the reflex field and its geometric significance, see \cite[Section 5.1.1]{lan} or \cite[Section 1.2.5]{la}.

\begin{rmk}
Let $p$ be a prime number.  For defining PEL type moduli problems over $\CO_{\reflex, (p)}$, it is also useful to replace our PEL datum with a {\em $p$-integral PEL datum}, i.e.\ (as in \cite[Equation (5.1.2.2)]{lan})
$(\CO\otimes_\ZZ\ZZ_{(p)}, \ast, L\otimes_\ZZ\ZZ_{(p)}, \langle, \rangle, h)$ 
with $L\otimes_\ZZ\ZZ_{(p)}$ required to be self-dual under the resulting Hermitian pairing on $L\otimes_\ZZ\IQ_p$ and $p\ndivides \Disc(\CO)$.
\end{rmk}

Note that the involution $\ast$ on $D$ induces an involution on $C:=\End_D(V)$.\index{$C$}  To the PEL datum $\mathfrak{D}$, we associate an algebraic group $G$ over $\IQ$ whose $R$-points are given by
\begin{align*}
G(R) = \left\{x\in C\otimes_\IQ R\mid xx^\ast \in R^\times \right\}
\end{align*}
for any $\IQ$-algebra $R$.  Note that this agrees with the definition of $G$ coming from the integral PEL data in Equation \eqref{GofR}.  The {\em similitude factor} of $G$ is the homomorphism
\begin{align*}\index{$\nu$}
\nu: G\rightarrow \mathbb{G}_m,
\end{align*}
defined by $g\mapsto g g^\ast$.  We let $G_1$ be the group whose $R$-points are given by
\begin{align*}\index{$G_1$}
G_1(R):=\ker(\nu) = \left\{x\in C\otimes_\IQ R\mid xx^\ast =1 \right\}.
\end{align*}

\subsubsection{Decompositions and signatures associated to PEL data}\label{decompsection}
Following the conventions and perspective of \cite[Section 2.1.5]{EiMa2}, we briefly summarize some key decompositions and define the signature of a PEL datum.  
While these decompositions are basic and the definition of {\em signature} occurs in each of the author's papers in this area, it apparently took many iterations (for this author, at least) to arrive at what feels like an ``optimally'' concise and useful setup for them, hence the citation of a relatively recent paper for this background material.

For any number field $L$, we let $\mathcal{T}_L$\index{$\mathcal{T}_L$ where $L$ denotes a number field} 
denote the set of embeddings $L\hookrightarrow \bar{\IQ}$.  Given $\tau\in \mathcal{T}_L$, we denote its composition with complex conjugation by $\tau^\ast$.\index{$\tau^\ast$}  Going forward, we fix a {\em CM type} $\Sigma_\cmfield$\index{$\Sigma_\cmfield$} for $\cmfield$ (i.e.\ $\Sigma_\cmfield\subseteq \mathcal{T}_\cmfield$ contains a choice of exactly one representative from each pair of complex conjugate embeddings $\tau, \tau^\ast\in \mathcal{T}_\cmfield$).

The decomposition $\cmfield\otimes_\IQ\IC = \oplus_{\tau\in \mathcal{T}_\cmfield}\IC$ (identifying $a\otimes b$ with $(\tau(a)b)_{\tau\in\mathcal{T}_\cmfield}$) induces decompositions
\begin{align*}
V_i &= \oplus_{\tau\in \mathcal{T}_\cmfield}V_{i, \tau}, \mbox{ for $i=1, 2$ }\\
V_\IC &= \oplus_{\tau\in \mathcal{T}_\cmfield}V_\tau,
\end{align*}
with $V_\tau = V_{1, \tau}\oplus V_{2, \tau}$ for all $\tau\in \mathcal{T}_\cmfield$.  Here, the subscript $\tau$ denotes the submodule on which each $a\in \cmfield$ acts as scalar multiplication via $\tau(a)$.

\begin{defi}
The {\em signature} of the unitary PEL datum $\mathfrak{D}$ is $\left(a_\tau\right)_{\tau\in \mathcal{T}_\cmfield}$, where
\begin{align*}\index{$a_\tau$}
a_\tau:=\dim_\IC V_{1, \tau}
\end{align*}
for all $\tau\in \mathcal{T}_\cmfield$.
 We also sometimes speak of the {\em signature} at $\tau\in \Sigma_\cmfield$ or at $\sigma\in \mathcal{T}_{\realfield}$, by which we mean $(a_\tau, a_{\tau^\ast})$, given the unique $\tau\in \Sigma_\cmfield$ such that $\tau|_{\realfield} = \sigma$.  For such $\tau\in \Sigma_\cmfield$, we also sometimes set $a_\sigma^+:=a_\tau^+:=a_\tau$ and $a_\sigma^-:=a_\tau^-:=a_{\tau^\ast}$.  It is also common to write $(a_\tau, b_\tau)$\index{$b_\tau$} in place of $(a_\tau, a_{\tau^\ast})$.
\end{defi}
Note that for each $\tau\in \mathcal{T}_\cmfield$, we have
\begin{align*}
a_\tau+a_{\tau^\ast} = n.
\end{align*}

More generally, we record some basic facts about decompositions of modules that will be useful to us later.  Given a number field $L$, we denote by $L^\Gal$\index{$\Gal$ in the superscript of a number field denotes the Galois closure of a number field} the Galois closure of $L$ in $\bar{\IQ}$, and we denote by $\CO_L$\index{$\CO_L$ the ring of integers in a number field $L$} the ring of integers in $L$.  If $R$ is an $\CO_{{\realfield}^\Gal}$-algebra and the discriminant of $\realfield/\IQ$ is invertible in $R$, then we have an isomorphism
\begin{align*}
\CO_{\realfield}\otimes_\ZZ R&\isomto \oplus_{\tau\in \mathcal{T}_{\realfield}}R\\
a\otimes r&\mapsto(\tau(a) r)_{\tau \in \mathcal{T}_{\realfield}}.
\end{align*}
Given an $\CO_{\realfield}\otimes_\ZZ R$-module $M$ and $\tau\in \mathcal{T}_{\realfield}$, we denote by $M_\tau$ the submodule on which each $a\in \CO_{\realfield}$ acts as multiplication by $\tau(a)$, and we have an $\CO_{\realfield}\otimes_\ZZ R$-module isomorphism
\begin{align*}
M\isomto\oplus_{\tau\in \mathcal{T}_{\realfield}}M_\tau
\end{align*}
If $R$ is, furthermore, an $\CO_{\cmfield^\Gal}$-algebra, then we can further decompose $M$ as
\begin{align*}
M = \oplus_{\tau\in\mathcal{T}_\cmfield}M_\tau = \oplus_{\tau\in\Sigma_\cmfield}M_\tau\oplus M_{\tau^\ast} = \oplus_{\sigma\in\mathcal{T}_{\realfield}}M_\sigma^+\oplus M_\sigma^-,
\end{align*}
where for each $\tau\in \mathcal{T}_\cmfield$, $M_\tau$ denotes the submodule of $M$ on which each element $a\in \OK$ acts via scalar multiplication by $\tau(a)$, and for each $\sigma\in \mathcal{T}_{\realfield}$, $M_\sigma^+$ (resp. $M_\sigma^-$) is the submodule of $M_\sigma$ on which each element $a\in \CO_\cmfield$ acts as multiplication by $\tau(a)$ (resp. $\tau^\ast (a)$) for $\tau\in \Sigma_\cmfield$ the unique element of $\Sigma_\cmfield$ such that $\tau|_{\realfield} = \sigma$.  For such $\tau\in\Sigma_\cmfield$, we also sometimes write $M_\tau^\pm$ in place of $M_\sigma^\pm$.  We also set
\begin{align}\label{equ:pmconvention}
M^\pm=\oplus_{\sigma\in\mathcal{T}_{\realfield}}M_\sigma^\pm.
\end{align}

\subsubsection{PEL data arising from unitary groups}\label{unitarycon}
Following the conventions of \cite[Section 2.2]{HELS}, we say that a PEL datum $\mathfrak{D}$ like above is {\em of unitary type} if the following three conditions hold:
\begin{itemize}
\item{$D = \cmfield\times\cdots \times \cmfield$, i.e.\ $D$ is a direct product of finitely many copies of $\cmfield$}
\item{$\ast$ acts as complex conjugation on each factor $\cmfield$ in $D = K\times\cdots \times K$}
\item{$\CO_D\cap \cmfield$ is the ring of integers $\CO_\cmfield$ in $\cmfield$, where $\cmfield$ is identified with its diagonal embedding in $D = K\times\cdots \times K$}
\end{itemize}

Fix a totally imaginary element $\alpha$ of $\CO_\cmfield$.    
As explained in \cite[Section 2.3]{HELS}, given a collection of $m$ unitary groups preserving Hermitian pairings $\langle, \rangle_{W_1}, \ldots, \langle, \rangle_{W_m}$ on $\cmfield$-vector spaces $W_1, \ldots, W_m$, respectively, we obtain a PEL datum $\mathfrak{D} = \left(D, \ast, \CO_D, V, \langle, \rangle, L, h \right)$ of unitary type as follows:
\begin{itemize}
\item{Let $D = \cmfield^m$.}
\item{Let $\ast$ be the involution on $D$ that acts as complex conjugation on each factor $\cmfield$.}
\item{Let $\CO_D = \CO_\cmfield^m\subseteq \cmfield^m$.}
\item{Let $V = \oplus_i W_i$.}
\item{Let $\langle (v_1, \ldots, v_m), (w_1, \ldots w_m)\rangle = \sum_i\langle v_i, w_i\rangle_i$, where 
\begin{align*}
\langle, \rangle_i:=\tr_{\cmfield/\IQ}\left(\alpha\langle, \rangle_{V_i}\right).
\end{align*}}
\item{Let $L = \oplus L_i$, where $L_i\subseteq V_i$ is an $\CO_\cmfield$-lattice such that $\langle L_i, L_i\rangle \subseteq\ZZ$.}
\item{Let $h=\prod_i h_i: \IC\rightarrow  \End_{\realfield\otimes_\IQ\IR}(V\otimes_\IQ\IR) = \prod_i \End_{\realfield\otimes_\IQ\IR}(W_i\otimes_\IQ\IR)$, where
\begin{align*}
h_i: \IC\rightarrow \End_{\realfield\otimes_\IQ\IR}(W_i\otimes_\IQ\IR)
\end{align*}
is defined by $h_i = \prod_{\tau \in \Sigma_\cmfield}h_{i, \tau}: \IC\rightarrow \End_{\realfield\otimes_\IQ\IR}(V_i\otimes_\IQ\IR) =  \prod_{\tau \in \Sigma_\cmfield}\End_{\IR}(V_i\otimes_{\cmfield, \tau}\IC)$
and
\begin{align*}
h_{i, \tau}:\IC\rightarrow \End_{\IR}(V_i\otimes_{\cmfield, \tau}\IC)
\end{align*}
is defined as follows.  Choose an ordered basis
\begin{align}\label{orderedbasisB}
\mathscr{B}
\end{align}
 for $W_i\otimes_{\cmfield, \tau}\IC$ with respect to which the matrix for $\langle, \rangle$ is of the form $\diag(1_r, -1_s)$ (with $r$ and $s$ dependent on $i$ and $\tau$), and identify $W_i\otimes_{\cmfield, \tau}\IC$ with $\IC^{r+s}$, as well as $\End_{\IR}(W_i\otimes_{\cmfield, \tau}\IC)$ with $M_{(r+s)\times(r+s)}(\IC)$, via this choice of basis.  We then define
\begin{align*}
h_{i, \tau}(z) = \diag(z1_r, \bar{z}1_s)
\end{align*}
for each $z\in\IC$.
 }
\end{itemize}

\subsubsection{Moduli problem}\label{sec:moduliproblem}
In this section, we will describe a moduli problem that classifies abelian varieties together with the structure of a {\em polarization}, {\em endomorphism}, and {\em level structure}. The letters {\em PEL} above are an abbreviation for {\em Polarization, Endomorphism, Level structure}, part of the data in this moduli problem.  The moduli problem here is analogous to the one encountered in the context of modular forms, where we classify elliptic curves with level structure.  To define a moduli space in our setting, we also need to include a polarization and endomorphism, although these extra structures will generally not come into play much later in this manuscript.
Our formulation here most closely follows the conventions in \cite[Section 2.2]{CEFMV}, \cite[Section 2.1]{HELS}, and \cite[Section 5.1]{lan}.  

Let $G$ be as in Equation \eqref{GofR}, and let $\cpct$\index{$\cpct$} be an open compact subgroup of $G(\adeles_f)$, where $\adeles_f$\index{$\adeles_f$} denotes the finite adeles in the adeles $\adeles$\index{$\adeles$} over $\IQ$.  

We first consider the moduli problem that associates to each pair $(S, s)$ consisting of a connected, locally Noetherian scheme $S$ over $\reflex$ and a geometric point $s$ of $S$ the set of {\em equivalence classes} of tuples $(A, \lambda, \iota, \eta)$ consisting of:
\begin{itemize}
\item{An abelian variety $A$ over $S$ of dimension $g:=n[\realfield: \IQ]$ (where $n$ is the dimension of the $\cmfield$-vector space $V$ in the PEL datum $\mathfrak{D}$)}
\item{A polarization $\lambda: A\rightarrow A^\vee$ (where $A^\vee$ denotes the dual abelian variety)}
\item{An embedding $\iota: \CO_D\otimes_\ZZ\IQ\hookrightarrow \End(A)\otimes_\ZZ\IQ$ of $\IQ$-algebras that satisfies the {\em Rosati condition}, i.e.\ 
\begin{align*}
\lambda\circ\iota(b^\ast) = (\iota(b))^\vee \circ\lambda
\end{align*}
for all $b\in\CO_D$
}
\item{A $\cpct$-level structure $\eta$, i.e.\ a $\pi_1(S, s)$-fixed orbit of $\CO_D$-linear isomorphisms 
\begin{align*}
L\otimes_\ZZ\adeles_f\isomto H_1(A, \adeles_f)
\end{align*}
that maps $\langle, \rangle$ to a $\adeles_f^\times$-multiple of the pairing on $H_1(A, \adeles_f)$ defined by the $\lambda$-Weil pairing
}
\end{itemize} 
In addition, we require that the above tuples satisfy {\em Kottwitz's determinant condition} (as explained in, for example, \cite[Section 5]{kottwitz}, \cite[Definition 1.3.4.1]{la}, and \cite[Section 2.2]{CEFMV}).  Tuples $(A, \lambda,  \iota,\eta)$ and $(A', \lambda', \iota', \eta')$ are considered {\em equivalent} if there is an isogeny $\phi: A\rightarrow A'$ such that $\lambda$ is a nonzero rational multiple of $\phi^\vee\circ\lambda'\circ\phi$, $\iota'(b)\circ \phi = \phi\circ \iota(b)$ for all $b\in \CO_D$, and $\eta' = \phi\circ \eta$.

If $\cpct$ is sufficiently small (or more precisely, if $\cpct$ is {\em neat}, in the sense of \cite[Definition 1.4.1.8]{la}), then this moduli problem is representable by a smooth, quasi-projective scheme $\moduli_\cpct$\index{$\moduli_\cpct$} over $\reflex$.  This is a result of \cite[Corollary 7.2.3.10]{la} and \cite[Section 5]{kottwitz}, which also explain that a $p$-integral version of this moduli problem is representable by a smooth, quasi-projective scheme over $\CO_\reflex\otimes\ZZ_{(p)}$.  (N.B.\ Given a scheme $S = \Spec(R)$, we sometimes write $R$ in place of $\Spec(R)$ when the meaning is clear from context.)  More precisely, assume that in addition to the conditions above, $\cpct = \cpct^p\cpct_p$ with $\cpct_p\subseteq G(\IQ_p)$ hyperspecial.  (i.e.\ We require that there exists a smooth group scheme $\mathcal{G}$ that is a model of $G$ over $\ZZ_p$, such that the special fiber $\mathcal{G}$ is reductive, and such that $\cpct_p = \mathcal{G}(\ZZ_p)$, as discussed in much more detail in \cite[Section 2.4]{getz}.  For geometric motivation for the condition of being hyperspecial, see also \cite{milne, lan}.  For the origins, see Tits's original article in \cite{tits}.)  Then there is a smooth, quasi-projective scheme $\moduliint_\cpct$\index{$\moduliint_\cpct$} over $\CO_\reflex\otimes\ZZ_{(p)}$ that to each pair $(S, s)$ consisting of a connected, locally Noetherian scheme $S$ over $\CO_{\reflex, (p)}$ and a geometric point $s$ of $S$ associates the set of {\em equivalence classes} of tuples $(A,  \lambda, \iota, \eta)$ (where tuples $(A, \lambda, \iota, \eta)$ and $(A', \lambda', \iota', \eta')$ are considered equivalent if there is a prime-to-$p$ isogeny $\phi$ meeting the conditions from above and furthermore $\lambda$ is a nonzero prime-to-$p$ multiple of $\phi^\vee\circ\lambda'\circ\phi$) consisting of:
\begin{itemize}
\item{An abelian variety $A$ over $S$ of dimension $g$}
\item{A prime-to-$p$ polarization $\lambda: A\rightarrow A^\vee$ (where $A^\vee$ denotes the dual abelian variety)}
\item{An embedding $\iota: \CO_D\otimes_\ZZ\ZZ_{(p)}\hookrightarrow \End(A)\otimes_\ZZ\ZZ_{(p)}$ of $\ZZ_{(p)}$-algebras that satisfies the Rosati condition
}
\item{A $\cpct^p$-level structure $\eta$, i.e.\ a $\pi_1(S, s)$-fixed orbit of $\CO_D$-linear isomorphisms 
\begin{align*}
L\otimes_\ZZ\adeles^{p, \infty}\isomto H_1\left(A, \adeles^{p, \infty}\right)
\end{align*}
that maps $\langle, \rangle$ to a $(\adeles^{\infty, p})^\times$-multiple of the pairing on $H_1(A, \adeles_f)$ defined by $\lambda$-Weil pairing
}
\end{itemize}
(The notation $\adeles^{p, \infty}$\index{$\adeles^{p, \infty}$} means the adeles away from $p$ and $\infty$.)  Once again, these tuples are also required to satisfy {\em Kottwitz's determinant condition} (as explained in, for example, \cite[Section 5]{kottwitz}, \cite[Definition 1.3.4.1]{la}, and \cite[Section 2.2]{CEFMV}).  
Note that $\moduliint_\cpct\times_{\Spec \CO_{\reflex, (p)}} \Spec\reflex = \moduli_\cpct$.

\begin{rmk}
The condition that $\cpct$ is {\em neat} guarantees the representability of our moduli problem by a smooth moduli space.  For the purposes of this manuscript, the details of what it means to be neat are unimportant.  For the sake of completeness, though, we briefly recall from \cite[Definition 1.4.1.8]{la} that $\cpct$ is defined to be neat if each of its elements $g = \left(g_p\right)$ is neat.  That is, the group $\cap_p \epsilon_p$, where $\epsilon_p$ denotes the group of algebraic eigenvalues of $g_p$ (viewed as an element of $GL(L\otimes\IQ_p)$), is torsion free.
\end{rmk}

\subsubsection{Compactifications}
The moduli spaces $\moduliint_\cpct$ have toroidal compactifications (as proved in \cite{la}), over which one can define modular forms.  There are also minimal compactifications of the spaces $\moduliint_\cpct$, as constructed in \cite{la} and summarized in \cite{lan}.  See \cite[Section 5.1.4]{lan} for a summary of key developments for compactifications leading up to Lan's work on toroidal and minimal compactifications in \cite{la}, in particular connections with the earlier work of Gerd Faltings and Ching-Li Chai in the setting of Siegel moduli problems in \cite[Chapters III--V]{FC}.

\subsubsection{Complex points and connection with Shimura varieties}\label{shconnsec}
We briefly summarize the connection between complex points of our moduli space $\moduli_\cpct$ and complex points of unitary Shimura varieties.  Let $\mathcal{H}$ be the orbit of $h$ under conjugation by $G(\IR)$, and let \index{$\cpct_\infty$}$\cpct_\infty\subseteq G(\IR)$ be the centralizer of $h$.  We give $\mathcal{H}$ the structure of a real manifold via the identification $G(\IR)/\cpct_\infty$ with $\mathcal{H}$ via $g\mapsto ghg^{-1}$.  Furthermore, $h$ induces a complex structure on $\mathcal{H}$.  When $(G, \mathcal{H})$ is a Shimura datum (in the sense of \cite[Section 2.3]{lan}), we say that $(G, \mathcal{H})$ is a {\em PEL-type Shimura datum}, and the Shimura variety $\Sh_\cpct$\index{$\Sh_\cpct$} associated to $(G, \mathcal{H})$ is called a {\em PEL-type Shimura variety}.  (See \cite[Section 2.4]{lan} for an excellent introduction to Shimura varieties.)

\begin{rmk}
As noted in \cite[Section 5.1.3]{lan}, it is not necessarily the case that $(G, \mathcal{H})$ is a Shimura datum.  When $\CO$ is an order in an imaginary quadratic field and $a\geq b$, though, $(G, \mathcal{H})$ is a Shimura datum, $G(\IR)\cong GU(a, b)$, and $\mathcal{H}\cong \mathcal{H}_{a, b}$,
where
\begin{align}\label{equ:Habdef}\index{$\mathcal{H}_{a, b}$}
\mathcal{H}_{a, b} := \left\{z\in \Mat_{a\times b}(\IC)\mid 1-{ }^t\bar{z}z >0\right\},
\end{align}
with $>0$\index{$>0$ means positive definite} meaning positive definite and $\Mat_{a\times b}$\index{$\Mat_{a\times b}$} meaning $a\times b$ matrices.
For details, see \cite[Example 5.1.3.5]{lan}.
\end{rmk}

Note that the complex points of $\Sh_\cpct$ can be identified with the double coset space
\begin{align*}\index{$X_\cpct$}
X_\cpct:= G(\IQ)\backslash \left(\mathcal{H}\times G\left(\adeles_f\right)/\cpct\right),
\end{align*}
with $G(\IQ)$ acting diagonally on $\mathcal{H}$ and $G\left(\adeles_f\right)$ on the left and $\cpct$ acting on $G\left(\adeles_f\right)$ on the right, as detailed in \cite[Section 8]{kottwitz} and \cite[Section 2]{lan}.  Even if $(G, \mathcal{H})$ is not a Shimura datum, there is an open and closed immersion
\begin{align*}
X_\cpct\hookrightarrow \moduliint_\cpct(\IC),
\end{align*}
and there is an open and closed subscheme of $\moduliint_\cpct$ that is an integral model of $\Sh_\cpct$.  In our setting (i.e.\ the unitary setting), $\moduliint_\cpct(\IC)$ is a disjoint union of finitely many copies of $X_\cpct$, as explained in \cite[Section 2.3.2]{CEFMV} and \cite[Section 5.1.3]{lan}.

Following the conventions for terminology introduced in \cite[Section 2.3]{HELS}, we call $\moduliint_\cpct$ the {\em moduli space (of PEL-type)} and $\Sh_\cpct$ the {\em Shimura variety (of PEL type)}.  Note that due to our insistence that the center of $D$ be a CM field, we have actually narrowed the set of cases under consideration in this manuscript to the unitary setting (what is often called Type A), rather than the broader PEL setting (which includes symplectic and orthogonal groups as well).

\begin{rmk}
Observe that each element $h\in G(\adeles_f)$ acts on the right on $\mathcal{H}\times G(\adeles_f)$ via $(z, g)\mapsto (z, gh)$, which induces a map 
\begin{align}\index{$[h]$}\label{equ:bracketh}
[h]: X_{h\cpct h^{-1}}\rightarrow X_\cpct.
\end{align}
  In turn, this provides a right action of $G(\adeles_f)$ on the collection $\left\{X_\cpct\right\}_\cpct$, which is useful for relating these geometric spaces to automorphic representations.  (This provides some motivation for this adelic formulation, which might otherwise seem unnecessary at first glance.)
\end{rmk}

We briefly review the structure of the double coset $X_\cpct$.  For more details, see, e.g.\, \cite[Section 2.2]{lan}.  It turns out that we can express $G(\adeles_f)$ as a finite disjoint union
\begin{align*}
G(\adeles_f) = \sqcup_{i\in I} G(\IQ)^+g_i\cpct,
\end{align*}
with $g_i\in G(\adeles_f)$ indexed by a finite set $I$ and $G(\IQ)^+:=G(\IQ)\cap G(\IR)^+$ (with $G(\IR)^+$ the connected component of the identity), and let $\mathcal{H}^+$\index{$\mathcal{H}^+$} be a connected component of $\mathcal{H}$ on which $G(\IR)^+$ acts transitively.  Then (as in \cite[Equations (2.2.21)]{lan}), we have
\begin{align*}
X_\cpct &= G(\IQ)^+\backslash \left(\mathcal{H}^+\times G(\adeles_f)/\cpct\right)\\
&= \sqcup_{i\in I}G(\IQ)^+\backslash \left(\mathcal{H}^+\times G(\IQ)^+g_i\cpct/\cpct\right)\\
&=\sqcup_{i\in I}\Gamma_i\backslash \mathcal{H}^+,
\end{align*}
where
\begin{align*}\index{$\Gamma_i$}
\Gamma_i:=\left(g_i\cpct g_i^{-1}\right)\cap G(\IQ)^+.
\end{align*}
As explained in \cite[Section 2.2]{lan}, each $\Gamma_i$ is a congruence subgroup of $G(\ZZ)$, i.e.\ $\Gamma_i$ contains the {\em principal congruence subgroup} (i.e.\ the kernel of $G(\ZZ)\rightarrow G(\ZZ/N\ZZ)$) for some positive integer $N$.

\begin{rmk}\label{PELclassificationH}
In the case of the usual upper half plane $\mathfrak{h}$ and elliptic curves, we associate to each point $z\in \mathfrak{h}$, an elliptic curve whose complex points are identified with the torus $\IC/(\ZZ z+\ZZ)$.  The analogue in our setting is as follows (and discussed in detail in \cite[Section 4]{shar}).  Given a unitary PEL datum like above and $z\in \mathcal{H}$, Shimura defines a map $p_z$\index{$p_z$} (see \cite[Section 4.7]{shar} or \cite[Section 2.3.2]{EDiffOps}) so that $\IC^g/p_z(L)$ is an abelian variety, and he explains (in \cite[Theorem 4.8]{shar}) how to assign a polarization, endomorphism, and level structure so that $\Gamma_\cpct\backslash \mathcal{H}$ classifies PEL tuples with level structure corresponding to $\cpct$.  (Here, \index{$\Gamma_\cpct$}$\Gamma_\cpct:=\cpct\cap GU^+(\IQ)$.)  For additional details on the moduli problem over $\IC$, see also \cite[Section 2.3.2]{EDiffOps} or \cite[Sections 3.1.5 and 3.2]{lan}.  We give a more precise description of the lattice $p_z(L)$ in Remark \ref{nnclass}, for signature $(n,n)$.
\end{rmk}

\subsubsection{Hermitian symmetric space associated to a unitary group}\label{sec:hermsymmsp}
The symmetric domain $\mathcal{H}$ for $G$ as above (with signature $(a_\tau, b_\tau)$ for each $\tau\in \Sigma_\cmfield$) is
\begin{align*}
\prod_{\tau\in \Sigma_{\cmfield}}\mathcal{H}_{a_\tau, b_\tau},
\end{align*}
where $\mathcal{H}_{a, b}$ is defined as in Equation \eqref{equ:Habdef}.
The elements $g  = (g_\tau)_{\tau\in\Sigma_{\cmfield}}\in \prod_{\tau\in\Sigma_{\cmfield}}GU^+(a_\tau, b_\tau)$ (where the superscript $+$ denotes positive determinant) act on $z = (z_\tau)_{\tau\in \Sigma_{\cmfield}}\in\prod_{\tau\in\Sigma_{\cmfield}}\mathcal{H}_{a_\tau, b_\tau}$ by
\begin{align*}
g z = \left(g_\tau z_\tau\right)_{\tau\in \Sigma_{\cmfield}},
\end{align*}
where for
\begin{align*}
g_\tau=\begin{pmatrix}a & b\\ c& d\end{pmatrix}
\end{align*}
with $a\in \GL_{a_\tau}(\IC)$ and $d\in GL_{b_\tau}(\IC)$, we have
\begin{align*}
g_\tau z_\tau := (az_\tau+b)(cz_\tau +d)^{-1}.
\end{align*}
The stabilizer of $0\in \mathcal{H}_{a_\tau, b_\tau}$ is the product of definite unitary groups $U(a_\tau)\times U(b_\tau)$ embedded diagonally in $U(a_\tau, b_\tau)$.  So we can identify $\mathcal{H}_{a_\tau, b_\tau}$ with $U(a_\tau, b_\tau)/(U(a_\tau)\times U(b_\tau)).$

In the case of definite unitary groups (i.e.\ $a_\tau b_\tau = 0$ for all $\tau$), $\mathcal{H}=\prod_{\tau\in \Sigma_{\cmfield}}\mathcal{H}_{a_\tau, b_\tau}$ consists of a single point, which parametrizes an isomorphism class of abelian varieties isogenous to $a_\tau+b_\tau$ (which is the same for all $\tau$) copies of a CM abelian variety, as explained in \cite[Section 4.8]{shar}.

In the special case where $a_\tau = b_\tau = n$ (i.e.\ the case considered by Hel Braun when she introduced Hermitian modular forms in \cite{braun1, braun2, braun3}), it is often convenient to work with an unbounded realization of the space $\mathcal{H}$, namely Hermitian upper half space:
\begin{align}\index{$\mathcal{H}_n$}
\mathcal{H}_n&:=\left\{Z\in \Mat_{n\times n}(\IC) \mid i({ }^t\bar{Z}-Z)>0\right\},\nonumber\\
&=\left\{Z\in \Mat_{n\times n}(\IC)=\Herm_n(\IC)\otimes_\IR\IC \mid \im(Z)>0\right\}\label{herm2equ}
\end{align}
where $\Herm_n$ denotes $n\times n$ Hermitian matrices and $\im(Z)$ denotes the {\em Hermitian imaginary part}.  
\begin{rmk}
The equality in Equation \eqref{herm2equ} follows from the identification
\begin{align*}
\Mat_{n\times n}(\IC)\cong\Herm_n(\IC)\otimes_\IR\IC\\
Z\mapsto \re(Z)+i \im(Z)
\end{align*}
with $\re(Z)$ and $\im(Z)$ the Hermitian real and imaginary parts, respectively, i.e.\
\begin{align*}
\re(Z)&:= \frac{1}{2}(Z+{ }^t\bar{Z})\\
\im(Z)&:=\frac{1}{2i}(Z-{ }^t\bar{Z}).
\end{align*}
\end{rmk}
The action here of $GU(\eta_n)$ on $\mathcal{H}_n$ is similar to the action given above on $\mathcal{H}_{a, b}$, i.e.\ $g= \begin{pmatrix} A & B\\ C& D\end{pmatrix}$ acts on $z\in \mathcal{H}_n$ by
\begin{align*}
gz = (Az+B)(Cz+D)^{-1}.
\end{align*}
The point $0$ in $\mathcal{H}_{n,n}$ corresponds to the point $i1_n\in \mathcal{H}_n$.  Note that the stabilizer of $i1_n\in \mathcal{H}_{n}$ is the product of definite unitary groups $U(n)\times U(n)$ embedded diagonally in $U(n,n)$.  So we can identify $\mathcal{H}_{n, n}$ with $U(n, n)/(U(n)\times U(n)).$
\begin{rmk}\label{nnclass}
If we choose a basis $\mathscr{B}$ as in \eqref{orderedbasisB} and the lattice $L$ is as in Section \ref{unitarycon}, then if the signature at each place is $(n,n)$, we can define $p_z: L^{[\realfield:\IQ]}\rightarrow\IC^g$ from Remark \ref{PELclassificationH} by
\begin{align*}
p_z(x) = \left([z_\tau \hspace{0.1in} 1_n]{ }^t\bar{x}, [{ }^t z_\tau  \hspace{0.1in}1_n]{ }^tx\right)_{\tau\in\mathcal{T}_{\realfield}}
\end{align*}
for each $x\in L$ and $z = \left(z_\tau\right)_\tau\in\mathcal{H}$ (with $ [ z_\tau  \hspace{0.1in}1_n]$ and $ [{ }^t z_\tau  \hspace{0.1in}1_n]$ denoting $n\times 2n$ matrices).
\end{rmk}

\subsection{Weights and representations}\label{sec:weightsandreps}

We briefly summarize key information about weights and representations, following \cite[Section 2.2]{EiMa2} and \cite[Sections 2.3--2.5]{EiMa}.  We will use the conventions established here in our definitions of automorphic forms on unitary groups in Section \ref{autformsunitarygpssection}.

The setup here is also similar to \cite[Sections 2.1--2.2]{EFGMM}, and \cite[Sections 2.3--2.4]{EFMV}.  For a more detailed treatment, the reader might consult \cite[Chapter II.2]{jantzen} or \cite[Sections 4.1 and 15.3]{fultonharris}.  Let $\Levi$\index{$\Levi$} be the subgroup of $G_1(\IC)$ that preserves the decomposition $V_\IC = V_1\oplus V_2$.  Let $\Borel$\index{$\Borel$} be a Borel subgroup of $\Levi$, and let $\Torus\subset\Borel$ be a maximal torus.  A choice of basis for $V_\IC$ that preserves the decomposition $V_\IC = V_1\oplus V_2$ identifies $\Levi$ with $\prod_{\tau\in \Sigma_{\cmfield}}\left(\GL_{a_\tau}\times \GL_{a_{\tau^\ast}}\right) = \prod_{\tau\in\mathcal{T}_\cmfield}\GL_{a_\tau}$.  We write $\Levi = \prod_\tau\Levi_\tau$, $\Borel = \prod_\tau\Borel_\tau$, and $\Torus = \prod_\tau\Torus_\tau$ (with each of these products over $\tau\in \mathcal{T}_\cmfield$).  We choose such a basis so that, furthermore, $\Borel_\tau$ is identified with the group of upper triangular matrices in $\GL_{a_\tau}$ for each $\tau\in \mathcal{T}_\cmfield$ and $T_\tau=\mathbb{G}_m^{a_\tau}$ is identified with the group of diagonal matrices in $\GL_{a_\tau}$ for each $\tau\in\mathcal{T}_\cmfield$.  Let $\Nilp$\index{$\Nilp$} denote the unipotent radical of $\Borel$.

We denote by $X^\ast:=X^\ast(\Torus)$\index{$X^\ast$} the group of characters of $\Torus$.  Via the isomorphism $\Borel/\Nilp\cong \Torus$, we also view $X^\ast$ as characters of $\Borel$.  We define
\begin{align*}\index{$X^+$}
X^+:=X^+(\Torus):=\left\{(\kappa_{1, \tau}, \ldots, \kappa_{a_\tau, \tau})_{\tau\in \mathcal{T}_\cmfield}\in \prod_{\tau\in\mathcal{T}_\cmfield}\ZZ^{a_\tau, \tau}\mid \kappa_{i, \tau}\geq \kappa_{i+1, \tau} \mbox{ for all } i\right\}.
\end{align*}
We identify each tuple $\kappa = (\kappa_{1, \tau}, \ldots, \kappa_{a_\tau, \tau})_{\tau\in \mathcal{T}_\cmfield}\in X^+$ with the {\em dominant weight} in $X^\ast$ defined by
\begin{align*}
\prod_{\tau\in\mathcal{T}_\cmfield}\diag(t_{1, \tau}, \ldots, t_{a_\tau, \tau})\mapsto \prod_{\tau\in\mathcal{T}_\cmfield}\prod_{i=1}^{a_\tau}t_{i, \tau}^{\kappa_{i, \tau}}.
\end{align*}
If $\kappa_{i, \tau}\geq 0$ for all $i, \tau$ and, furthermore $\kappa_{i, \tau}>0$ for some $i, \tau$, then we call $\kappa$ {\em positive}.

Suppose $R$ is a $\ZZ_p$-algebra or a field of characteristic $0$.  To each dominant weight $\kappa$, there is an associated representation \index{$\rho_\kappa = \rho_{\kappa, R}$}$\rho_\kappa = \rho_{\kappa, R}$ of $\Levi(R)$, which is obtained by application of a {\em $\kappa$-Schur functor} $\mathbb{S}_\kappa$\index{$\mathbb{S}_\kappa$} (as explained in, e.g.\, \cite[Section 15.3]{fultonharris} and summarized in \cite[Section 2.4.2]{EFMV}).  We call $\rho_\kappa$ the {\em representation of (highest) weight $\kappa$}. If $R$ is of characteristic $0$ or of sufficiently large characteristic, the representation $\rho_\kappa$ is irreducible (as explained in, e.g.\, \cite[Chapter II.2]{jantzen}).  Given a locally free sheaf of modules $\mathcal{F}$ over a scheme $T$, we denote by $\mathbb{S}_\kappa(\mathcal{F})$ the locally free sheaf of modules defined by $\mathbb{S}_\kappa(\mathcal{F)}(\Spec R): = \mathbb{S}_\kappa(\mathcal{F}(\Spec R))$ for each affine open $\Spec R$ of $T$.
\begin{rmk}
In practice, the representations $\rho_\kappa$ with which we work are built from compositions of tensor products, symmetric products, and alternating products.  
For an explicit description over $\IC$ of the highest weight representations and highest weight eigenvectors, see \cite[Section 12.6]{shar}.  More generally, see also \cite[Remark 3.5]{CEFMV}, which concerns any field of characteristic $0$.  In that setting, $\rho_\kappa$ is isomorphic to 
\begin{align*}
\Ind_{\Borel^-}^\Levi(-\kappa) = \left\{f: N^-\backslash \Levi\rightarrow \mathbb{A}^1 \mid f(th) = \kappa(t)^{-1}f(h) \mbox{ for all } t\in \Torus\right\},
\end{align*}
where $\Borel^-$ is the opposite Borel, identified with the group of lower triangular matrices, and $N^-$ is its unipotent radical.  Here, $\rho(g)f(h): = f(hg)$ for all $g, h\in \Levi$.
\end{rmk}

\section{Automorphic forms on unitary groups}\label{autformsunitarygpssection}
In analogue with the case of modular forms on $\GL_2$, there are various ways to define automorphic forms on unitary groups.  Each formulation has its own merits, depending on what one is trying to do.  We introduce the following viewpoints: analytic functions on the symmetric spaces $\mathcal{H}$ (Section \ref{sec:hermaut}), functions on abelian varieties with PEL structure (Section \ref{sec:abaut}), global sections of a sheaf on $\moduli_\cpct$ (Section \ref{sec:sheafaut}), and functions on a unitary group (over $\IR$ in Section \ref{sec:Raut} and adelically in Section \ref{sec:adeleaut}).  We conclude the section with brief discussions of $q$-expansions (Section \ref{sec:qexp}) and automorphic representations (Section \ref{sec:autrep}).

\subsection{As analytic functions on Hermitian symmetric spaces}\label{sec:hermaut}
We now introduce automorphic forms as analytic functions on Hermitian symmetric spaces, a generalization of the formulation of modular forms as functions on the upper half plane.  This perspective was first introduced by Braun in \cite{braun1, braun2, braun3} and was widely employed by Shimura, e.g.\ in  \cite[Section 5]{shar}.

In this section, we set
\begin{align*}
\mathcal{H} &= \prod_{\tau\in\Sigma_{\cmfield}}\mathcal{H}_{m_\tau, n_\tau}\\
GU_{m,n}^+(\IR)&:=\prod_{\tau\in\Sigma_{\cmfield}}GU^+(m_\tau, n_\tau).\index{$GU_{m,n}^+$}
\end{align*}

\begin{defi}
Suppose $mn\neq 0$.
The {\em factors of automorphy} or {\em automorphy factors} associated to $g = \left(g_\tau\right)_{\tau\in\Sigma_\cmfield}\in GU_{m,n}^+(\IR)$, with $g_\tau= \begin{pmatrix}a_\tau& b_\tau\\ c_\tau& d_\tau\end{pmatrix}$ and $z= \left(z_\tau\right)_{\tau\in\Sigma_\cmfield}\in \mathcal{H}$ at a place $\tau\in \Sigma_\cmfield$ are:
\begin{itemize}\index{$\lambda_{g_\tau}(z_\tau) = \lambda_\tau(g, z) = \lambda(g_\tau, z_\tau)$}\index{$\mu_{g_\tau}(z_\tau) = \mu_\tau(g, z) = \mu(g_\tau, z_\tau)$}
\item{$\lambda_{g_\tau}(z_\tau): = \lambda_\tau(g, z) := \lambda(g_\tau, z_\tau):=(\overline{b_\tau}{ }^tz_\tau+\overline{a_\tau})$ and $\mu_{g_\tau}(z_\tau) = \mu_\tau(g, z) = \mu(g_\tau, z_\tau):=(c_\tau z_\tau+d_\tau)$ if we work with the bounded domain $\mathcal{H} = \mathcal{H}_{m,n}$}
\item{$\lambda_{g_\tau}(z_\tau): = \lambda_\tau(g, z) := \lambda(g_\tau, z_\tau):=(\overline{c_\tau}{ }^tz_\tau+\overline{d_\tau})$ and $\mu_{g_\tau}(z_\tau) = \mu_\tau(g, z) = \mu(g_\tau, z_\tau):=(c_\tau z_\tau+d_\tau)$ if we work with the unbounded domain $\mathcal{H} = \mathcal{H}_{n}$ (and require $m=n$ at all places $\tau$)}
\end{itemize}
\end{defi}
\begin{defi}
Suppose now that $mn = 0$.  The {\em factors of automorphy} or {\em automorphy factors} associated to $g = \begin{pmatrix}a& b\\ c& d\end{pmatrix}\in GU_{m,n}^+(\IR)$ and $z\in \mathcal{H}$ at a place $\tau\in \Sigma_\cmfield$ are:
\begin{itemize}
\item{$\lambda_{g_\tau}(z_\tau): = \lambda_\tau(g, z) := \lambda(g_\tau, z_\tau):=\overline{g_\tau}$ and $\mu_{g_\tau}(z_\tau) = \mu_\tau(g, z) = \mu(g_\tau, z_\tau):=1$ if $n=0$}
\item{$\lambda_{g_\tau}(z_\tau): = \lambda_\tau(g, z) := \lambda(g_\tau, z_\tau):=1$ and $\mu_{g_\tau}(z_\tau) = \mu_\tau(g, z) = \mu(g_\tau, z_\tau):=g$ if $m=0$}
\end{itemize}
\end{defi}
\begin{defi}The {\em scalar factor of automorphy} or {\em scalar automorphy factor} associated to $g = \begin{pmatrix}a& b\\ c& d\end{pmatrix}\in GU_{m,n}^+(\IR)$ and $z\in \mathcal{H}$ at a place $\tau\in \Sigma_\cmfield$ is:
\begin{align*}\index{$j_{g_\tau}(z_\tau)=j_\tau(g, z) = j(g_\tau, z_\tau)$}
j_{g_\tau}(z_\tau):=j_\tau(g, z): = j(g_\tau, z_\tau):=\det(\mu_\tau(g, z)).
\end{align*}
\end{defi}
\begin{rmk}
Note that
\begin{align*}
\det(\lambda_\tau(g,z)) = \det(\overline{g_\tau})\nu(g_\tau)^{-n_\tau}j_\tau(g, z)
\end{align*}
for all $g\in GU_{m,n}^+(\IR)$ and $z\in \mathcal{H}$.  We also note that
\begin{align*}
\lambda_\tau(gh, z) &= \lambda_\tau(g, hz)\lambda_\tau(g, z)\\
\mu_\tau(gh, z) &= \mu_\tau(g, hz)\mu_\tau(g, z)
\end{align*}
for all $g, h\in GU_{m,n}^+(\IR)$ and $z\in \mathcal{H}$.
\end{rmk}
For each $g = \left(g_\tau\right)_{\tau\in\Sigma_\cmfield}$ and $z=(z_\tau)\in \prod_{\tau\in \Sigma_{\cmfield}}\mathcal{H}_\tau$, we define
\begin{align*}\index{$M_g(z) = M(g, z)$}
M_g(z) := M(g, z):=\left(\lambda_\tau(g, z), \mu_\tau(g, z)\right)_{\tau\in \Sigma_\cmfield}.
\end{align*}
\begin{rmk}\label{Mgrmk}
We note that ${}^tM_\gamma(z)$ maps the lattice $p_{\gamma z}(L)$ from Remark \ref{PELclassificationH} to $p_z(L)$ and defines an isomorphism from $\underline{A}_{\gamma z}$ to $\underline{A}_z$ for each $\gamma\in\Gamma_\cpct$, .
\end{rmk}

Note that $M_g(z) \in \prod_{\tau\in \Sigma_\cmfield}\GL_{a_\tau}(\IC)\times \GL_{b_\tau}(\IC)$.  Let $H(\IC)=\prod_{\tau\in \Sigma_\cmfield}\GL_{a_\tau}(\IC)\times \GL_{b_\tau}(\IC)$, and let $\rho: H(\IC)\rightarrow \GL(X)$ be an algebraic representation with $X$ a finite-dimensional $\IC$-vector space.
Following the conventions of \cite[Equations (5.6a) and (5.6b)]{shar}, we define\index{$f\mid\mid_\rho g$}\index{$f\mid_\rho g$}
\begin{align*}
(f\mid\mid_\rho g)(z) := \rho(M_g(z))^{-1}f(g z)\\
f\mid_\rho g:=f\mid\mid_\rho \left(\nu(g)^{-1/2}g\right),
\end{align*}
where $\nu(g)^{-1/2}g:=(\nu(g_\tau)^{-1/2}g_\tau)_{\tau\in\Sigma_\cmfield}$.  Let $\Gamma$ be a congruence subgroup of $GU^+(\IQ)$.

\begin{defi}\label{autdef} With the conventions and notation above, we define an {\em automorphic form of weight $\rho$ and level $\Gamma$} to be a function
\begin{align*}
f:\mathcal{H}\rightarrow X
\end{align*}
such that all of the following conditions hold:
\begin{enumerate}
\item{$f$ is holomorphic.}\label{condholo}
\item{$f\mid\mid_\rho \gamma = f$ for all $\gamma\in \Gamma$.}\label{cond:auto}
\item{If $\realfield=\IQ$ and the signature is $(1,1)$, then $f$ is holomorphic at every cusp.}\label{kcond}
\end{enumerate}
\end{defi}
\begin{rmk}\label{koocherrmk}
By Koecher's principle (\cite[Theorem 2.3 and Remark 10.2]{lankoecher}), if we are not in the situation of Condition \eqref{kcond} (i.e.\ $\realfield=\IQ$ with signature $(1,1)$), then holomorphy at the boundary is automatic.
\end{rmk}
We will use the terminology {\em automorphic function} to refer to a function that meets Condition \eqref{cond:auto} but not necessarily Conditions \eqref{condholo} and \eqref{kcond}.
\begin{rmk}
The definition of {\em automorphic form} in \cite[Section 5.2]{shar} uses $\mid_\rho$ in place of $\mid\mid_\rho$.  In the text following that definition, though, Shimura explains that he works almost exclusively with $\mid\mid_\rho$ in \cite{shar}.  Similarly, we will work with $\mid\mid_\rho$ here.  Our choice in this manuscript is motivated by the fact that $\mid\mid_\rho$ is what arises naturally from algebraic geometry.  So this definition will be consistent with our later algebraic geometric definition of {\em automorphic form}.
\end{rmk}

\begin{rmk}
In the case of signature $(n,n)$, automorphic forms on unitary groups were first introduced by Hel Braun in \cite{braun1, braun2, braun3}, where she considered them as functions on Hermitian symmetric spaces, in analogue with the formulation of Siegel modular forms as functions on Siegel upper half space.  She called them {\em Hermitian modular forms}, in analogue with Siegel modular forms.
\end{rmk}

\begin{example}
It is instructive to compare the case of $\GL_2$ (i.e.\ the familiar case of classical modular forms) with the case of $\GSp_2$ and $\GU(1, 1)$.  What do you notice about the symmetric spaces in each of these cases?  What do the automorphic forms on each of these groups have to do with each other?
\end{example}

\begin{example}
When $K$ is a quadratic imaginary field and $\langle, \rangle$ is of signature $(2, 1)$ on a $K$-vector space $V$, the group $U(V, \langle, \rangle)$ is also called a {\em Picard modular group}, and the automorphic forms on it are called {\em Picard modular forms}.
\end{example}

\subsection{As functions on a space of abelian varieties with PEL structure}\label{sec:abaut}
In analogue with the case of modular forms, where we move from functions on the upper half plane to functions on a space of elliptic curves with additional structure, we now reformulate our definition of automorphic forms in terms of functions on a space of abelian varieties with additional structure.  Given an abelian variety $\underline{A} = \underline{A}_z$ (i.e.\ an abelian variety $A$ together with a polarization, endomorphism, and level structure) parametrized by $z\in \Gamma_\cpct\backslash\mathcal{H}$ (like in Remark \ref{PELclassificationH}) for some neat open compact subgroup $\cpct$, let 
\begin{align*}
\underline{\Omega}:=\underline{\Omega}_{A/\IC} = H^1(A, \ZZ)\otimes\IC.
\end{align*}
Note that the action of $\CO$ on $\underline{\Omega}$ coming from $h$ induces a decomposition of modules
\begin{align*}
\underline{\Omega} = \underline{\Omega}^+\oplus\underline{\Omega}^- = \oplus_{\tau\in \mathcal{T}_\cmfield}\underline{\Omega}_\tau,
\end{align*}
where $\underline{\Omega}^{\pm} = \oplus_{\tau\in \Sigma_{\cmfield}}\Omega_\tau^{\pm}$ and the rank of $\Omega_\tau^{\pm}$ is $a_\tau^{\pm}$, where $(a_\tau^+, a_\tau^-)$ is the signature of the PEL data at $\tau$.  (This is an instance of the decomposition described in Equation \eqref{equ:pmconvention}.)  For each $\tau\in\mathcal{T}_\cmfield$, we write $\underline{\Omega}_\tau$ for $\underline{\Omega}_\tau^+$ if $\tau\in \Sigma_\cmfield$ and for $\underline{\Omega}_\tau^-$ if $\tau\nin\Sigma_\cmfield$.  Note that $\CO$ acts on $\underline{\Omega}_\tau$ via $\tau$.
For each $\tau\in \mathcal{T}_\cmfield$, we define
\begin{align*}\index{$\CE_{\underline{A}}$}
\CE_{\underline{A}} = \oplus_{\tau\in\mathcal{T}_\cmfield}\Isom_\IC(\Omega_\tau, \IC^{a_\tau}),
\end{align*}
where $(a_\tau, a_{\tau^\ast})$ is the signature at $\tau$ for each $\tau\in \Sigma_\cmfield$ and $\Isom_\IC$ denotes the $\IC$-module of isomorphisms as $\IC$-modules.
We have an action of $\Levi(\IC)=\prod_{\tau\in\mathcal{T}_\cmfield}\GL_{a_\tau}(\IC)$ on $\CE_{\underline{A}}$ given by
\begin{align*}
\left((g_\tau)_{\tau\in \mathcal{T}_\cmfield}(\ell_\tau)_{\tau\in \mathcal{T}_\cmfield}\right) \left((x_\tau)_{\tau\in \mathcal{T}_\cmfield}\right) = (\ell_\tau({ }^tg_\tau x_\tau))_{\tau\in \mathcal{T}_\cmfield}
\end{align*}
for each $(g_\tau)_{\tau\in \mathcal{T}_\cmfield}\in \Levi(\IC)$ and each $(\ell_\tau)_{\tau\in \mathcal{T}_\cmfield}\in \CE_{\underline{A}}$.

\begin{rmk}
Note that the map $p_z$ from Remark \ref{PELclassificationH} induces a choice of basis for $H_1(A_z, \ZZ)$ (and hence an element $\ell_z\in \CE_{\underline{A}_z}$), via the identification of $H_1(A_z, \ZZ)$ with $p_z(L)$.
\end{rmk}

The following lemma is similar to \cite[Lemma 3.7]{CEFMV}.
\begin{lem}\label{lemfFFf}
Let $\cpct$ be a neat open compact subgroup of $GU(\adeles^\infty)$, and let $\Gamma=\Gamma_\cpct:=\cpct\cap GU^+(\IQ)$.  Let $\rho: H(\IC)\rightarrow GL(X)$ be an algebraic representation, with $X$ a finite-dimensional $\IC$-vector space.  There is a one-to-one correspondence between the following two sets:
\begin{itemize}
\item{the set of $X$-valued automorphic functions $f$ on $\mathcal{H}$ of weight $\rho$ and level $\Gamma$}
\item{the set of $X$-valued functions $F$ on the set of pairs $(\underline{A}, \ell)$, with $\underline{A}$ an abelian variety parametrized by $\Gamma\backslash\mathcal{H}$ and $\ell\in \CE_{\underline{A}}$, that satisfy
\begin{align}\label{Fcondaell}
F(\underline{A}, g\ell) = \rho({}^tg)^{-1}F(\underline{A}, \ell)
\end{align}
for all $g\in H(\IC)$.
}
 \end{itemize}
The bijection identifies such a function $F$ of $(\underline{A}, \ell)$ with the automorphic function $f_F$ defined by $f_F(z)=F(\underline{A}_z, \ell_z)$.
\end{lem}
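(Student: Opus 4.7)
The plan is to construct the bijection in both directions explicitly, using the fact that $H(\IC) = \prod_{\tau \in \mathcal{T}_\cmfield} \GL_{a_\tau}(\IC)$ acts simply transitively on the fiber $\CE_{\underline{A}}$ once a basepoint is chosen. The crucial input is Remark \ref{Mgrmk}, which identifies the canonical basepoint $\ell_{\gamma z}\in \CE_{\underline{A}_{\gamma z}}$ with the image of $\ell_z\in \CE_{\underline{A}_z}$ under the action of ${}^t M_\gamma(z)$ via the isomorphism $\underline{A}_{\gamma z}\isomto \underline{A}_z$.

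From $F$ to $f_F$: given $F$ satisfying \eqref{Fcondaell}, set $f_F(z) := F(\underline{A}_z, \ell_z)$. To verify $f_F\|_\rho \gamma = f_F$ for $\gamma\in \Gamma$, compute $f_F(\gamma z) = F(\underline{A}_{\gamma z}, \ell_{\gamma z})$; identifying $\underline{A}_{\gamma z}$ with $\underline{A}_z$ via Remark \ref{Mgrmk}, this becomes $F(\underline{A}_z, M_\gamma(z) \cdot \ell_z)$, which by \eqref{Fcondaell} equals $\rho(M_\gamma(z))^{-1} F(\underline{A}_z,\ell_z) = \rho(M_\gamma(z))^{-1} f_F(z)$, as desired.

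From $f$ to $F_f$: given an automorphic function $f$ of weight $\rho$ and level $\Gamma$, for any pair $(\underline{A}, \ell)$ pick a representative $z\in\mathcal{H}$ with $\underline{A}\cong \underline{A}_z$, write $\ell = g\cdot \ell_z$ for the unique $g\in H(\IC)$ supplied by simple transitivity, and define
\begin{align*}
F_f(\underline{A}, \ell) := \rho({}^t g)^{-1} f(z).
\end{align*}
The transformation property \eqref{Fcondaell} is then immediate from the definition. One then must check well-definedness in the choice of representative $z$: if $z' = \gamma z$ for $\gamma\in \Gamma$, then by Remark \ref{Mgrmk} one has $\ell_{z'} = M_\gamma(z)\cdot \ell_z$ under the canonical identification, so the element $g'\in H(\IC)$ expressing $\ell = g'\ell_{z'}$ satisfies $g = g' M_\gamma(z)$; combining this with the automorphy relation $f(\gamma z) = \rho(M_\gamma(z))^{-1} f(z)$ shows $\rho({}^t g')^{-1} f(z') = \rho({}^t g)^{-1} f(z)$, giving independence of the representative. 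Finally, one verifies that $F\mapsto f_F$ and $f\mapsto F_f$ are mutually inverse: $f_{F_f}(z) = F_f(\underline{A}_z, \ell_z) = f(z)$ (taking $g=1$), while $F_{f_F}(\underline{A}_z, g\ell_z) = \rho({}^t g)^{-1} f_F(z) = \rho({}^t g)^{-1} F(\underline{A}_z, \ell_z) = F(\underline{A}_z, g\ell_z)$ by \eqref{Fcondaell}.

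The main technical obstacle is keeping careful track of the bookkeeping between the geometric basis $\ell_z$ (induced by the Shimura map $p_z$ of Remark \ref{PELclassificationH}) and the matrix $M_\gamma(z)$ describing the isomorphism $\underline{A}_{\gamma z}\cong \underline{A}_z$, in particular ensuring that the transpose-and-inverse conventions in the $H(\IC)$-action on $\CE_{\underline{A}}$ match those in the definition of $\|_\rho$ so that the cocycle identity $M_{g h}(z) = M_g(h z) M_h(z)$ translates cleanly between both sides. Once Remark \ref{Mgrmk} is accepted, the rest is formal.
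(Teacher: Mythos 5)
Your overall structure mirrors the paper's proof (define $f_F$, define $F_f$, check the two maps are inverse), but the bookkeeping with $M_\gamma(z)$ has genuine sign and transpose errors, and the conclusion you label ``as desired'' is actually the \emph{wrong} transformation law.

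Concretely: in the $F \mapsto f_F$ direction you claim the identification $\underline{A}_{\gamma z}\cong \underline{A}_z$ from Remark \ref{Mgrmk} sends $\ell_{\gamma z}$ to $M_\gamma(z)\cdot\ell_z$, and then you apply \eqref{Fcondaell} to get $\rho(M_\gamma(z))^{-1}f_F(z)$. Two problems. First, \eqref{Fcondaell} with $g=M_\gamma(z)$ gives $\rho({}^tM_\gamma(z))^{-1}$, so you dropped a transpose. Second, and more seriously, the paper's convention is that $\ell_{\gamma z}$ corresponds to ${}^tM_\gamma(z)^{-1}\ell_z$ (note both the transpose and the inverse), which yields
\begin{align*}
F(\underline{A}_{\gamma z}, \ell_{\gamma z}) = F(\underline{A}_z, {}^tM_\gamma(z)^{-1}\ell_z) = \rho\bigl({}^t({}^tM_\gamma(z)^{-1})\bigr)^{-1}F(\underline{A}_z,\ell_z) = \rho(M_\gamma(z))\,F(\underline{A}_z,\ell_z),
\end{align*}
i.e.\ $f_F(\gamma z) = \rho(M_\gamma(z))f_F(z)$. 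By Definition \ref{autdef} and the definition of $\mid\mid_\rho$, the automorphy condition $f\mid\mid_\rho\gamma=f$ unwinds to exactly $f(\gamma z)=\rho(M_\gamma(z))f(z)$; the relation $f(\gamma z)=\rho(M_\gamma(z))^{-1}f(z)$ that you wrote (and reuse in the well-definedness check) is the inverse of the correct law.

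The same errors propagate to your well-definedness argument for $F_f$: with your assumed $\ell_{z'}=M_\gamma(z)\ell_z$ and your assumed automorphy relation, the two expressions $\rho({}^t g')^{-1}f(z')$ and $\rho({}^t g)^{-1}f(z)$ differ by $\rho(M_\gamma(z){}^tg')$ versus $\rho({}^tM_\gamma(z){}^tg')$, so they do not agree unless $M_\gamma(z)={}^tM_\gamma(z)$. With the correct relations $\ell_{z'}={}^tM_\gamma(z)^{-1}\ell_z$, $g'=g\,{}^tM_\gamma(z)$, and $f(\gamma z)=\rho(M_\gamma(z))f(z)$, the check does close up, and the rest of your argument (mutual inversion, simple transitivity of $H(\IC)$ on $\CE_{\underline{A}}$) is fine. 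So the idea is right, but the transpose-and-inverse bookkeeping --- which you yourself flagged as the main technical obstacle --- is where the proof currently fails.
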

\begin{proof}
Given a function $F$ of $(\underline{A}, \ell)$ as in the statement of the lemma, note that by Remark \ref{Mgrmk} (which says that the action of $\Gamma$ preserves the isomorphism class of $(\underline{A}, \ell)$), we have
\begin{align*}
F(\underline{A}_{\gamma z}, \ell_{\gamma z}) = F(\underline{A}_z, { }^tM_\gamma(z)^{-1}\ell_z) = \rho(M_\gamma(z))F(\underline{A}_z, \ell_z)
\end{align*}
for all $\gamma\in \Gamma$.
So the function $f_F$ is well-defined (i.e.\ independent of the isomorphism class of $\underline{A}$).  Now, we define a map $f\mapsto F_f$ that is inverse to the map $F\mapsto f_F$.  Given $\underline{A}$ parametrized by $\Gamma_{\cpct}\backslash\mathcal{H}$, let $z$ be such that $\underline{A}$ is isomorphic to $\underline{A}_z$, and let $g\in\Levi(\IC)$ be such that $\ell=g\ell_z$.  It is straightforward now to check that given an automorphic function $f$ as above, the function 
$F_f(\underline{A}, \ell):=\rho({ }^tg)^{-1}f(z)$
satisfies Condition \eqref{Fcondaell} and provides the desired inverse map.
\end{proof}
Thus, we may view automorphic functions as certain functions that assign an element of $X$ to each pair $(\underline{A}, \ell)$.  (N.B.\ If instead we want to restrict the discussion to automorphic forms, then we also need to check the holomorphy conditions from Definition \ref{autdef}.)  Note that as an intermediate step in reformulating our modular functions, we could also have defined them as functions on lattices.  For details of that perspective, see \cite[Theorem 2.4]{EDiffOps}.  We can also define automorphic forms as certain rules that take complex abelian varieties as $\underline{A}$ as input, which is the content of Lemma \ref{CECCdefnlemma} and follows immediately from Lemma \ref{lemfFFf}.  First, though, we need to introduce the contracted product
\begin{align*}\index{$\CE_{\uA, \rho}$}
\CE_{\uA, \rho}:=\CE_{\uA}\times^H X:=\CE_{\uA}\times X/\sim,
\end{align*}
where the equivalence $\sim$ is given by
\begin{align*}
(\ell, v)\sim (g\ell, \rho({ }^tg^{-1})v)
\end{align*}
for all $g\in H(\IC)$.

\begin{lem}\label{CECCdefnlemma}[Lemma 3.9 of \cite{CEFMV}]
Let $\rho$, $X$, $\cpct$, and $\Gamma$ be as in Lemma \ref{lemfFFf}.  There is a one-to-one correspondence between the following two sets:
\begin{itemize}
\item{the set of $X$-valued automorphic functions $f$ on $\mathcal{H}$ of weight $\rho$ and level $\Gamma$}
\item{the set of rules that assign to each point $\uA$ parametrized by $\Gamma\backslash\mathcal{H}$ an element $\tilde{F}(\uA)\in\CE_{\uA, \rho}.$
}
\end{itemize}
\end{lem}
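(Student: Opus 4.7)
The plan is to chain the correspondence of Lemma \ref{lemfFFf} with the tautological description of the contracted product $\CE_{\underline{A},\rho}$. That is, I will show that a function $F$ on pairs $(\underline{A},\ell)$ satisfying the equivariance condition \eqref{Fcondaell} is exactly the same datum as a rule $\tilde F$ assigning to each $\underline{A}$ an element of the quotient $\CE_{\underline{A}}\times X/\sim$.

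First, I will define the map $F\mapsto\tilde F$ by sending a function $F$ as in Lemma \ref{lemfFFf} to the rule
\begin{align*}
\tilde F(\underline{A}) := [(\ell, F(\underline{A},\ell))]\in \CE_{\underline{A},\rho},
\end{align*}
where $\ell\in\CE_{\underline{A}}$ is any choice. Well-definedness amounts to checking that the class is independent of $\ell$: if $\ell'=g\ell$ for some $g\in \Levi(\IC)$, then the equivariance of $F$ gives $F(\underline{A},g\ell)=\rho({}^tg)^{-1}F(\underline{A},\ell)$, so $(g\ell, F(\underline{A},g\ell))=(g\ell, \rho({}^tg^{-1})F(\underline{A},\ell))\sim (\ell, F(\underline{A},\ell))$ by the very definition of the equivalence $\sim$.

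Conversely, I will define the inverse map $\tilde F\mapsto F$ as follows. Given $\tilde F(\underline{A})\in \CE_{\underline{A},\rho}$, pick any representative $(\ell_0,v_0)$ of the class. For an arbitrary $\ell\in\CE_{\underline{A}}$, there is a unique $g\in \Levi(\IC)$ with $\ell=g\ell_0$ (since $\Levi(\IC)$ acts simply transitively on $\CE_{\underline{A}}$), and I set $F(\underline{A},\ell):=\rho({}^tg^{-1})v_0$. Independence of the representative $(\ell_0,v_0)$ follows because replacing $(\ell_0,v_0)$ by $(h\ell_0,\rho({}^th^{-1})v_0)$ for $h\in \Levi(\IC)$ changes $g$ to $gh^{-1}$, and the factor $\rho({}^th^{-1})$ cancels out. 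The equivariance condition \eqref{Fcondaell} then follows directly: for $h\in \Levi(\IC)$ we have $h\ell = (hg)\ell_0$, so $F(\underline{A},h\ell)=\rho({}^t(hg)^{-1})v_0 = \rho({}^th^{-1})\rho({}^tg^{-1})v_0 = \rho({}^th^{-1})F(\underline{A},\ell)$. Composing the two maps and unwinding the definitions shows they are mutually inverse, and composing with the bijection of Lemma \ref{lemfFFf} yields the stated correspondence. There is no substantive obstacle here; the proof is purely a formal manipulation of the contracted product quotient, and the only point requiring attention is making the equivariance match the equivalence relation on the nose.
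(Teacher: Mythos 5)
Your proof is correct and is essentially the same argument the paper has in mind: the paper merely states that the lemma "follows immediately from Lemma \ref{lemfFFf}" together with the formula $(\ell, F(\uA,\ell)) = \tilde F(\uA)$ for the correspondence, and you have supplied the routine verification that this map is a well-defined bijection. The only implicit ingredient you use is that $\Levi(\IC)$ acts simply transitively on $\CE_{\uA}$, which holds since each $\Isom_\IC(\Omega_\tau, \IC^{a_\tau})$ is a $\GL_{a_\tau}(\IC)$-torsor.
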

The correspondence between $F$ and $\tilde{F}$ is given by $(\ell, F(\uA, \ell)) = \tilde{F}(\uA)$.
\begin{rmk}\label{ceccrmk}
Note that we could reformulate the elements $\tilde{F}$ from Lemma \ref{CECCdefnlemma} as the global sections of a vector bundle over $\Gamma\backslash\CH$.  When we formulate automorphic forms algebraic geometrically below, this is a perspective we will introduce.
\end{rmk}

For the algebraic theory, it will be useful to develop a definition of automorphic forms that also works over other base rings.  Our formulation in terms of abelian varieties provides some inspiration for how we might extend our discussion to other base rings.
\begin{rmk}
In this section, we will exclude the possibility of $\realfield=\IQ$ with signature $(1,1)$ so that we do not need to worry about holomorphy at cusps (which is essentially the case of classical modular forms).  In all other cases, holomorphy at cusps is automatic, as explained in Remark \ref{koocherrmk}.
\end{rmk}

Let $\reflex'$ be a finite extension of $E$ that contains $\tau(\cmfield)$ for all $\tau\in \mathcal{T}_\cmfield$.  For each scheme $S$ over $\Spec\left(\CO_{\reflex', (p)}\right),$ we define
\begin{align*}\index{$\moduliint_{\cpct, S}$}
\moduliint_{\cpct, S}:=\moduliint_\cpct\times_{\Spec\left(\CO_{\reflex, (p)}\right)} S.
\end{align*}
Following the conventions for decompositions introduced in Section \ref{decompsection}, we can decompose the sheaf of relative differentials $\underline{\Omega}_{\uA/S}$\index{$\underline{\Omega}_{\uA/S}$} of an $S$-point $\underline{A}$ of $\moduliint_{\cpct}$ as
\begin{align*}
\uO_{\uA/S} = \oplus_{\tau\in\mathcal{T}_\cmfield}\uO_{\uA/S, \tau} = \oplus_{\sigma\in\mathcal{T}_{\realfield}}\left(\uO_{\uA/S, \sigma}^+\oplus\uO_{\uA/S, \sigma}^-\right).
\end{align*}
We define sheaves
\begin{align*}\index{$\CE_{\uA/S}$}
\CE_{\uA/S} :=\oplus_{\tau\in\mathcal{T}_\cmfield}\Isom_{\CO_S}\left(\uO_{\uA/S, \tau},\CO_S^{a_\tau}\right)\\
\CE_{\uA/S}^\pm:=\oplus_{\tau\in \Sigma_\cmfield}\Isom_{\CO_S}\left(\uO_{\uA/S, \tau}^\pm, \CO_S^{a_\tau^\pm}\right)\index{$\CE_{\uA/S}^\pm$}
\end{align*}

  For the remainder of this section, let $R$ be an $\CO_{\reflex', (p)}$-algebra, let $M_\rho$ be a finite free $R$-module, and let $\rho: H(R)\rightarrow \GL_R(M_\rho)$ be an algebraic representation of $H(R)$.  For any $R$-algebra $R'$, we extend the action of $H(R)$ linearly to an action of $H(R')$ on $(M_\rho)_{R'}:=M_\rho\otimes_RR'$.  We define a contracted product
 \begin{align*}\index{$\CE_{\uA/R', \rho}$}
 \CE_{\uA/R', \rho}:=\CE_{\uA/R}\times^H(M_\rho)_{R'}:=\left(\CE_{\uA/R'}\times\left(M_\rho\right)_{R'}\right)/\sim,
 \end{align*}
 where the equivalence $\sim$ is defined by
 \begin{align*}
 (\ell, m)\sim (g\ell, \rho({ }^tg^{-1})m)
 \end{align*}
 for all $g\in H$.

Definitions \ref{algdef1}, \ref{algdef2}, \ref{algdef3} are the analogues in our setting of Nick Katz's definitions of modular forms in \cite[Sections 1.1 -- 1.5]{katzmodular}.  Notice the parallels between the functions introduced in Lemma \ref{lemfFFf}, which are defined over $\IC$, and those defined by Definition \ref{algdef1}, which allows us to work over other rings as well.
\begin{defi}\label{algdef1}
An {\em automorphic form of weight $\rho$ and level $\cpct$ defined over $R$} is a function $f$ that, for each $R$-algebra $R'$, assigns an element of $(M_\rho)_{R'}$ to each pair $(\uA, \ell)$ consisting of an $R'$-point $\uA$ of $\moduliint_\cpct(R')$ and $\ell\in\CE_{\uA/R'}$ such that both of the following conditions hold:
\begin{enumerate}
\item{$f(\uA, g\ell) = \rho({ }^tg^{-1})f(\uA, \ell)$ for all $g\in H(R')$ and all $\ell\in \CE_{\uA/R'}$}
\item{If $R'\rightarrow R''$ is a homomorphism of $R$-algebras, then
\begin{align*}
f(\uA\times_{R'} R'', \ell\otimes_{R'}1)  = f(\uA, \ell)\otimes_{R'}1_{R''}\in (M_\rho)_{R''},
\end{align*}
i.e.\ the definition of $f(\uA, \ell)$ commutes with extension of scalars for all $R$-algebras.
}
\end{enumerate}
\end{defi}
Definition \ref{algdef1} is the generalization to our setting of the second definition of modular forms Katz gives in \cite[Section 1.1]{katzmodular}.  Definition \ref{algdef2} is the generalization to our setting of the first definition of modular forms Katz gives in \cite[Section 1.1]{katzmodular}.  Similarly to Katz's situation, our two definitions here are equivalent and are simply two ways to formulate an automorphic form.

\begin{defi}\label{algdef2}
An {\em automorphic form of weight $\rho$ and level $\cpct$ defined over $R$} is a rule
\begin{align*}
\uA\mapsto \tilde{f}(\uA)\in\CE_{\uA/R', \rho},
\end{align*}
for each $R$-algebra $R'$ and $R'$-point $\uA$ in $\moduliint_\cpct(R')$, that commutes with extension of scalars for all $R$-algebras, i.e.\
\begin{align*}
\tilde{f}(\uA\times_{R'} R'')  = \tilde{f}(\uA)\otimes_{R'}1_{R''},
\end{align*}
for all $R$-algebra homomorphisms $R'\rightarrow R''$.
\end{defi}
The equivalence between Definitions \ref{algdef1} and \ref{algdef2} is given by
\begin{align*}
\tilde{f}(\uA) = (\ell, f(\uA, \ell)).
\end{align*}
\begin{rmk}\label{rmk:isomexpl}
For readers seeing these definitions for the first time, it is a useful exercise to see what they say in the case of modular forms and then compare them with Katz's definitions of modular forms in \cite[Section 1.1]{katzmodular}.  At first glance, the appearance of $\mathcal{E}_{\uA/R}$ might look surprising to readers who are only familiar with the modular forms setting and have not yet considered the setting of higher rank groups.  Note, though, that giving an element of $\mathcal{E}_{\uA/R}$ is equivalent to choosing an ordered basis for $\Omega_{\uA/R}$.  In the setting of modular forms, $\uA$ is replaced by an elliptic curve, in which case an ordered basis is simply a choice of a nonvanishing differential.  In other words, in the setting of modular forms, the setup here says to consider functions on pairs $(\underline{E}, \omega)$ consisting of an elliptic curve (of some specified level) and a nonvanishing differential, which coincides precisely with the conventional algebraic geometric formulation of modular forms presented in \cite[Section 1.1]{katzmodular}.
\end{rmk}

\subsection{As global sections of a sheaf}\label{sec:sheafaut}

Those familiar with the algebraic geometric definition of modular forms are likely accustomed to defining a modular form as a global section of a certain sheaf over a moduli space, like in \cite[Section 1.5]{katzmodular}.  This is the formulation we introduce now.  Let $\pi:\CA\rightarrow \moduliint_{\cpct, \CO_{E', (p)}}$\index{$\pi$}\index{$\CA$} denote the universal object over $\moduliint_{\cpct, \CO_{E', (p)}}$, and define
\begin{align*}\index{$\uo$}
\uo:=\pi_\ast\uO_{\CA/\moduliint}.
\end{align*}
Following the conventions for decompositions introduced in Section \ref{decompsection}, we have a decomposition
\begin{align*}
\uo = \oplus_{\tau\in\Sigma_\cmfield}\left(\uo_\tau^+\oplus\uo_\tau^-\right) = \oplus_{\tau\in\mathcal{T}_\cmfield}\uo_\tau.
\end{align*}
We define a sheaf $\CE = \CE_\cpct$ on $\moduliint_{\cpct, \CO_{E', (p)}}$ by
\begin{align*}\index{$\CE = \CE_\cpct$}
\CE :=  \CE_\cpct:=\oplus_{\tau\in\mathcal{T}_\cmfield}\Isom_{\CO_\moduliint}\left( \uo_\tau, (\CO_{\moduliint})^{a_\tau}\right).
\end{align*}
For motivation for introducing the sheaf $\CE$, see Remark \ref{rmk:isomexpl}.  Following the conventions introduced in Section \ref{decompsection}, we also define sheaves
\begin{align*}\index{$\CE^\pm$}
\CE^\pm = \oplus_{\tau\in \Sigma_\cmfield}\CE_\tau^\pm.
\end{align*}

For any representation $(\rho, M_\rho)$ of $H$,
we define
\begin{align*}\index{$\uo^\rho=\CE^\rho$}
\uo^\rho:=\CE^\rho:=\CE\times^H M_\rho
\end{align*}
to be the sheaf on $\moduliint_{\cpct}$ for which, for each $\CO_{E', (p)}$-algebra $R$,
\begin{align*}
\CE^\rho(R)=\left(\CE(R)\times M_\rho\otimes R\right)/\sim,
\end{align*}
with the equivalence $\sim$ given by
\begin{align*}
(\ell, m)\sim(g\ell, \rho({}^tg^{-1})m)
\end{align*}
for all $g\in H$.

\begin{defi}\label{algdef3}
An {\em automorphic form of weight $\rho$ and level $\cpct$ defined over $R$} is a global section of the sheaf $\uo^\rho=\CE_{\cpct, \rho} = \CE^\rho$ on $\moduliint_{\cpct, R}$.
\end{defi}
It is a straightforward exercise to check that Definition \ref{algdef3} is equivalent to Definition \ref{algdef2}.

\begin{rmk}
When $\rho$ is an irreducible representation of positive dominant weight $\kappa$, we often write $\uo^\kappa$\index{$\uo^\kappa$} or $\CE^\kappa$\index{$\CE^\kappa$} in place of $\CE^\rho$.  Also, note that if $\kappa =\left(\kappa_\tau\right)_{\tau\in \mathcal{T}_\cmfield}$, then
\begin{align*}
\uo^\kappa = \boxtimes_{\tau\in\mathcal{T}_\cmfield}\uo_\tau^{\kappa_\tau}
\end{align*}
\end{rmk}

\begin{rmk}
As noted in \cite[Section 2.5]{EiMa}, $\uo^\kappa$ can be canonically identified with $\mathbb{S}_\kappa(\uo)$.
\end{rmk}

\begin{rmk}
If $f$ is an automorphic form of weight $\rho_\kappa$, we often say that $f$ is {\em of weight} $\kappa$.
\end{rmk}

\begin{rmk}
By Koecher's principle (see \cite[Theorem 2.3 and Remark 10.2]{lankoecher}), automorphic forms defined over $\moduliint_\cpct$ extend uniquely to automorphic forms over a toroidal compactification of $\moduliint_\cpct$, so long as we exclude the solitary case of $[\realfield:\IQ]=1$ with signature $(1,1)$ (the familiar case of modular forms for $GL_2$).
\end{rmk}

\begin{rmk}
The algebraic automorphic forms defined above over $R=\IC$ are in bijection with finite sets of holomorphic automorphic forms  as defined earlier.  More precisely, given an automorphic form on $\moduli_\cpct(\IC)$, we get global sections over each of the connected components of $\moduli_\cpct(\IC)$.  Recall from Section \ref{shconnsec} each of these connected components is isomorphic to a quotient $\Gamma\backslash\mathcal{H}$.  By GAGA and Lemma \ref{CECCdefnlemma} (together with Remark \ref{ceccrmk}), we then see that each algebraic automorphic form on $\moduli_\cpct(\IC)$ gives rise to a set of holomorphic automorphic forms on $\mathcal{H}$, one for each component of $\moduli_\cpct(\IC)$.
\end{rmk}

\begin{rmk}
The moduli space $\moduliint_\cpct$ acts as a sort of bridge that allows us to move between different base rings.  This, in turn, is useful for studying algebraic aspects of automorphic forms, including certain values of automorphic forms {\em a priori} arising over $\IC$, a crucial ingredient in our study of $L$-functions below.  In another direction, this space also serves as a starting point for defining $p$-adic automorphic forms.
\end{rmk}

\subsection{As functions on a unitary group}
From the perspective of representation theory, it is useful to define our automorphic forms as functions on a unitary group.  Within this perspective, there are several equivalent ways of viewing automorphic forms, namely as $\IC$-valued functions on $G(\IR)$, as $\IC$-valued functions on $G(\adeles)$, and as vector-valued functions on $G(\IR)$ or $G(\adeles)$.  The relationship between the $\IC$- and vector-valued approaches for functions on $G(\IR)$ is also the subject of \cite[Remark 1.5(2)]{boreljacquet}. The formulation of automorphic forms as functions on groups is a direct generalization of the formulation for $\GL_2$ (i.e.\ classical modular forms).  We begin by reviewing the (likely more familiar) case of $\GL_2$.  The setup from Sections \ref{shconnsec} and \ref{sec:hermsymmsp} enables us to extend that approach to the setting of unitary groups.

\subsubsection{Review of the case of $GL_2$}
Before proceeding with unitary groups, we first briefly review how to translate the definition of modular forms as functions on the upper half plane $\mathfrak{h}$ into the definition in terms of functions on a group, first as functions of $SL_2(\IR)$, and then as functions of $\GL_2(\adeles)$.  Although the main focus of this manuscript is automorphic forms on unitary groups, readers might find it helpful first to recall the situation for modular forms.
In particular, the definitions of automorphic forms as functions on unitary groups in Sections \ref{sec:Raut} and \ref{sec:adeleaut} arise similarly to how they arise for modular forms and $GL_2$.

Since $SL_2(\IR)$ acts transitively on $\mathfrak{h}$ and $SO_2(\IR) = \left\{\alpha_\theta :=\begin{pmatrix}\cos \theta & \sin \theta\\
-\sin \theta & \cos \theta
\end{pmatrix}\right\}$ is the stabilizer of $i\in \mathfrak{h}$, we can identify $SL_2(\IR)/SO_2(\IR)$ with $\mathfrak{h}$.  Given a modular form $f$ of weight $k$ and level $\Gamma$ on $\mathfrak{h}$, we define $\phi_f: \Gamma\backslash SL_2(\IR)\rightarrow \IC$ by
\begin{align*}
\phi_f(g):=j(g, i)^{-k}f(g i ),
\end{align*}
where $j(g, z)$ is the canonical automorphy factor, i.e.\ $j(g, z) = cz+d$ for $z\in \mathfrak{h}$ and $g = \begin{pmatrix}a& b \\ c& d\end{pmatrix}\in SL_2(\IR)$.  Then $\phi_f$ is an example of an automorphic form of weight $k$ and level $\Gamma$ on $SL_2(\IR)$, i.e.\ $\phi_f$ is a smooth function that is left $\Gamma$-invariant (i.e.\ $\phi_f(\gamma g ) = \phi_f(g)$ for all $\gamma\in \Gamma$ and $g\in SL_2(\IR)$) and such that each $\alpha_\theta\in SO_2(\IR)$ acts on the right by multiplication by $e^{ki \theta}$, i.e.\ $\phi_f(g\alpha_\theta) = e^{ki\theta}\phi_f(g)$.  

Taking this a step further, we can replace $SL_2(\IR)$ by $GL_2^+(\IR)/\IR_{>0}$, where $+$ denotes positive determinant (or by $GL_2(\IR)/\IR^\times$, noting that in each case we are taking the quotient of the group by its center).  In this case, we define $\phi_f: \left(\Gamma\cdot Z(G)\right)\backslash G(\IR)\rightarrow \IC$, with $G$ denoting $GL_2$ or $GL_2^+$ (or even $SL_2$, like above) and $Z(G)$ denoting the center of $G$, by
\begin{align*}
\phi_f(g):=\det(g)^{k/2}j(g, i)^{-k}f(g i ),
\end{align*}
and $\phi_f(g)$ satisfies the same conditions as in the previous paragraph, but with $SL_2$ replaced by $G$.

It turns out, though, to be convenient to define automorphic forms as functions of $GL_2(\adeles)$.  First, we note that if for each prime number $p$, $\cpct_p\subset GL_2(\ZZ_p)$ is a compact open subgroup such that $\det(\cpct_p) = \ZZ_p^\times$ and $\cpct_p = \GL_2(\ZZ_p)$ for all but finitely many $p$, then 
\begin{align}\label{equ:gl2adel}
\GL_2(\adeles) = \GL_2(\IQ)\cdot\left(\GL_2^+(\IR)\times \prod_p\cpct_p\right).
\end{align}
  If $\cpct = \prod_p\cpct_p$ and 
  \begin{align}\label{equ:GammaK}
  \Gamma  = \GL_2(\IQ)\cap (GL_2^+(\IR)\times \cpct),
  \end{align}
   then Equation \eqref{equ:gl2adel} induces bijections
\begin{align*}
\Gamma\backslash\mathfrak{h}&\leftrightarrow Z\left(\GL_2\left(\adeles\right)\right)\GL_2(\IQ)\backslash \GL_2(\adeles)/\left(SO_2(\IR)\times\cpct\right)\\
\Gamma\backslash \GL_2^+(\IR)&\leftrightarrow \GL_2(\IQ)\backslash \GL_2(\adeles)/\cpct.
\end{align*}
Note $Z\left(\GL_2\left(\adeles\right)\right)\GL_2(\IQ)\backslash \GL_2(\adeles)/\left(SO_2(\IR)\times\cpct\right)=\IR^\times\GL_2(\IQ)\backslash \GL_2(\adeles)/\left(SO_2(\IR)\times\cpct\right) = \IR_{>0}\GL_2(\IQ)\backslash \GL_2(\adeles)/\left(SO_2(\IR)\times\cpct\right)$.
These identifications enable us to reformulate $f$ (and $\phi_f$) from above as a function $\varphi_f: \GL_2(\IQ)\backslash \GL_2(\adeles)\rightarrow \IC$ defined by
\begin{align*}
\varphi_f(\gamma g_\infty (g_p)_p) :=\phi_f(g_\infty),
\end{align*}
for all $\gamma\in GL_2(\IQ)$, $g_\infty\in GL_2^+(\IR)$, and $g_p\in \cpct_p$.
So $\varphi_f$ is a function of $GL_2(\adeles)$ that is left-invariant under $\GL_2(\IQ)$, right invariant under $\cpct$, and satisfies $\varphi_f(g\alpha_\theta) = e^{ki\theta}\phi_f(g)$ for all $\alpha_\theta\in SO_2(\IR)$ (and is smooth as a function of $\GL_2(\IR)$).  One can also extend this treatment to include a nebentypus character.  We also note that the familiar congruence subgroups $\Gamma_0(N)$, $\Gamma_1(N)$, and $\Gamma(N)$ arise in Equation \eqref{equ:GammaK} when $\cpct$ is the subgroup $\cpct(N)$, $\cpct_0(N)$, and $\cpct_1(N)$ of $GL_2(\hat{\ZZ})$, respectively, consisting of matrices congruent mod $N$ to elements of  $\Gamma_0(N)$, matrices in $\cpct(N)$ of the form $\begin{pmatrix}* & *\\ 0 & 1\end{pmatrix}$, and matrices in $\cpct(N)$ of the form $\begin{pmatrix}* &0\\ 0 & 1\end{pmatrix}$, respectively.

\subsubsection{As functions of unitary groups over $\IR$}\label{sec:Raut}
Given the expression of the symmetric space $\mathcal{H}$ in terms of a quotient of $G(\IR)$, it is natural also to define automorphic forms as certain functions on $G(\IR)$.
The group $\cpct_\infty$ is the stabilizer of a fixed point $\mathbf{i}\in \CH$, and similarly to \cite[Section A8.2]{shar}, we identify $\cpct_\infty$ with its image in $H(\IC)$ under the embedding $\cpct_\infty\hookrightarrow H(\IC)$ that maps $k\in \cpct_\infty$ to $M_k(\mathbf{i})=M(k, \mathbf{i})$.
\begin{rmk}
We use the notation $\mathbf{i}$ for the fixed point to emphasize the connection with the $SL_2$ case, where $\CH$ is the upper half plane, $\cpct$ is the group $SO_2(\IR)$, and $\mathbf{i}$ is the number $i$.  More generally, when working with $\mathcal{H}_n$, $\mathbf{i}$ can be realized as the diagonal matrix $i1_n$, and $\cpct_\infty$ as $U(n)\times U(n)$.
\end{rmk}
As discussed in \cite[Section A8.2]{shar}, an automorphic form $f$ viewed as a function on $\CH$ gives a corresponding function $f^\rho$ on $G(\IR)$ as in Definition \ref{realautdefn} below via
\begin{align*}
f^\rho(g) := (f\mid\mid_\rho g)(\mathbf{i})= \rho(M_g(\mathbf{i}))^{-1}f(g\mathbf{i})
\end{align*}
for each $g\in G(\IR)$.  In the other direction, given $f^\rho$ on $G(\IR)$ as in Definition \ref{realautdefn}, we define an automorphic form on $\CH$ of weight $\rho$ by
\begin{align*}
f(z)=f(g_z\mathbf{i}):= \rho(M_g(\mathbf{i}))f^\rho(g_z),
\end{align*}
for each $z\in \CH$, where $g_z\in G(\IR)$ is such that $g_z\mathbf{i} = z$.  Let $\rho:H(\IC)\rightarrow GL(V)$ be a finite-dimensional $\IC$-representation of $H(\IC)$.
 We arrive at the following definition, which is similar to the definition of an automorphic form over any reductive group.
\begin{defi}\label{realautdefn}
An {\em automorphic form} of weight $\rho$ and level $\Gamma$ is a holomorphic function
\begin{align*}
f:G(\IR)\rightarrow V
\end{align*}
that is of moderate growth (in the sense made explicit in Definition \ref{defi:moderategrowth} below) and such that
\begin{align*}
f(\gamma gk) = \rho(k)^{-1}f(g)
\end{align*}
for each $g\in G(\IR)$ and each $k\in \cpct_\infty$.
\end{defi}
\begin{rmk}
In the formulation of Definition \ref{realautdefn}, the holomorphy condition is equivalent to being killed by certain differential operators (as detailed in \cite[Section A8.2]{shar}).
\end{rmk}

\subsubsection{As functions of unitary groups over the adeles}\label{sec:adeleaut}
Given our expression in Section \ref{shconnsec} of $\moduliint_\cpct(\IC)$ as a quotient $X_\cpct$ of the adelic points of a unitary group (and also given our expression of $X_\cpct$ in terms of quotients of $\mathcal{H}$), it is natural to reformulate our definition of {\em automorphic form} in terms of functions on adelic points of a unitary group.  More generally, an automorphic form on a reductive linear algebraic group $G$ (unitary group or not) can be formulated adelically as a function on $G(\adeles)$ meeting certain conditions.  Indeed, this is the context in which we will be able to work with automorphic representations. 

The adelic formulation is particularly convenient in the context of $L$-functions.  For example, each Euler factor at a place $v$ in the Euler product for an automorphic $L$-function corresponds with information from the automorphic form at the place $v$.  This is seen in Tate's thesis, as well as in the discussion of the doubling method in Section \ref{autLfcnssection} below.  (The usefulness of the adelic formulation is also seen in Shimura's computation of Fourier coefficients in \cite{sh}, as well as the related computations of Fourier coefficients in \cite{apptoSHL, apptoSHLvv}, where the global Fourier transform factors - under certain conditions - as a product of local Fourier transforms.)  The adelic formulation also provides a convenient setting for viewing automorphic forms of different levels at once, for example in a collection of Shimura varieties $\Sh_\cpct$.  

Automorphic representations, discussed briefly in Section \ref{sec:autrep}, are realized in terms of the right regular action of $G(\adeles)$ on certain functions $\phi$ of $G(\adeles)$, i.e.\ given $g\in G(\adeles)$, $(g\phi) (h):= \phi(hg)$.

\begin{defi}
Given a subgroup $\mathcal{U}$ of $G(\adeles)$, we say that a function $\phi$ on $G(\adeles)$ is {\em right $\mathcal{U}$-finite}, if the right translates of $\phi$ by elements of $\mathcal{U}$ span a finite-dimensional vector space.  Similarly, we say that $\phi$ is {\em $Z(\mathfrak{g})$-finite}, where $Z(\mathfrak{g})$ denotes the center of the complexified Lie algebra $\mathfrak{g}$\index{$\mathfrak{g}$} of $G(\IR),$ if the translates of $\phi$ by elements of $Z(\mathfrak{g})$ span a finite dimensional vector space.
\end{defi}

\begin{defi}
We say that a function $\phi$ on $G(\adeles)$ is {\em smooth} if it is $\ci$ as a function of $G(\IR)$ and locally constant
 as a function of $G(\adeles_f)$.
\end{defi}

\begin{defi}\label{defi:moderategrowth}
We say that a $\IC$-valued function $\phi$ on $G(\adeles)$ is of {\em moderate growth} if there exist numbers $n, C\geq 0$, such that
\begin{align*}
|\phi(g)|\leq C|| g||^n
\end{align*}
for all $g\in G(\adeles)$.  Here, $||g||:=\prod_v\max_{1\leq i, j\leq n}\left(\max\left( |g_{ij}|_v, |g_{ij}^{-1}|_v\right)\right)$, where we have identified $G$ with its image in $\GL_n$ under an embedding $G\hookrightarrow \GL_n$ and $g_{ij}$ denotes the $ij$th entry of the associated matrix in $GL_n$.  More generally, a tuple of $\IC$-valued functions $\left(\phi_1, \ldots, \phi_d\right)$ is of {\em moderate growth} if each function $\phi_i$ is of moderate growth.
\end{defi}

\begin{defi}
For any reductive group $G$ (unitary group or not), an {\em automorphic form} on $G(\adeles)$ is defined to be a function $\phi$ on $G(\adeles)$ such that
\begin{align*}
\phi(g) = \phi(\alpha g)
\end{align*}
for all $\alpha \in G(\IQ)$ (so we may view $\phi$ as a function on $G(\IQ)\backslash G(\adeles)$) and such that $\phi$ additionally is smooth, right $(\cpct\times\cpct_\infty)$-finite, of moderate growth, and $Z(\mathfrak{g})$-finite.  We say that $\phi$ is of {\em level} $\cpct$ if $\phi$ is fixed by $\cpct$.
\end{defi}

\begin{example}
Observe that an adelic automorphic form on a definite unitary group of signature $(1, 0)$ is a Gr\"ossencharacter on $\adeles_\cmfield^\times$.  (Given a number field $L$, we denote by $\adeles_L$\index{$\adeles_L$ the ring of adeles for a number field $L$} the ring of adeles for $L$.)
\end{example}
We can also consider automorphic forms of weight $\kappa$ as follows.  
Given an irreducible $\IC$-representation $(V_\kappa, \rho_\kappa)$ of $\cpct_\infty$ of highest weight $\kappa$, an automorphic form of {\em weight} $\kappa$ is an automorphic form $\phi: G(\adeles)\rightarrow V_\kappa$ such that $\phi(gu) = \rho_\kappa(u)^{-1} \phi(g)$ for all $g\in G(\adeles)$ and $u\in \cpct_\infty$.  We also can consider automorphic forms {\em with nebentypus character} $\psi: T(\hat\ZZ)\rightarrow \bar{\IQ}^\times$ factoring through $T(\ZZ/m\ZZ)$ for some integer $m$ and $\Torus$ a maximal torus like in Section \ref{sec:weightsandreps}, i.e.\ automorphic forms $\phi$ for which $\phi(gt) = \psi(t)\phi(g)$ for all $t\in T(\hat\ZZ)$.

\begin{defi}
An automorphic form $\varphi$ on $G(\adeles)$ is called a {\em cusp form} (or {\em cuspidal automorphic form}) if
\begin{align*}
\int_{N(\IQ)\backslash N(\adeles)}\varphi(n g) dn = 0
\end{align*}
for each $g\in G(\adeles)$ and each unipotent radical $N(\IQ)$ of each proper parabolic subgroup of $G(\IQ)$. 
\end{defi}

\begin{rmk}
We make a brief note about conventions.  Up to this point and sometimes going forward, we work with a group $G$ defined over $\IQ$.  At times, though, it will convenient to work with groups defined over other fields (e.g.\ $G_1$ defined over $\realfield$).  This will be the case, for example, in our initial discussion of the doubling method in Section \ref{sec:doubling}, when we introduce the original approach from \cite{PSR}.  If we replace $G$ by a group $H$ defined over a number field $L$, we consider the $\adeles_L$-points in place of the $\adeles$-points, and we write $H_v$ for the component of $H$ at $v$.
\end{rmk}

\subsection{Representations of $G(\adeles)$ and automorphic representations}\label{sec:autrep}
In Section \ref{autLfcnssection}, we consider $L$-functions attached to certain {\em automorphic representations}.  Here, we briefly establish some fundamental information about automorphic representations.  The Langlands conjectures (introduced by Robert Langlands) predict a precise correspondence between Galois representations and automorphic representations.  In particular, the predictions include that the $L$-function associated to a Galois representation is the same as a particular $L$-function associated to the corresponding automorphic representation.

The information summarized here is not specific to unitary groups but will be helpful to have available as we move forward.   For more detailed introductions to automorphic representations (and their role in the Langlands program), see, for example, \cite{arthurgelbart, gelbartelem, gelbartALM, kudlamfar, knappintro, bumpbook}.

We denote by $\mathcal{A}(G)$\index{$\mathcal{A}(G)$} the space of automorphic forms on $G(\adeles)$, and we denote by $\mathcal{A}_0(G)$\index{$\mathcal{A}_0(G)$} the space of cusp forms on $G(\adeles)$.  Let $Z$ denote the center of $G$, and let $\chi$ be a character of $Z(\IQ)\backslash Z(\adeles)$.  We denote by $\mathcal{A}(G)_\chi$\index{$\mathcal{A}(G)_\chi$} the submodule of automorphic forms $\phi$ such that $\phi(z g) = \chi(z)\phi(g)$, and we denote by $\mathcal{A}_0(G)_\chi$\index{$\mathcal{A}_0(G)_\chi$} the submodule of cuspidal automorphic forms in $\mathcal{A}(G)_\chi$.  Note that $G(\adeles)$ acts on $\mathcal{A}(G)$ and $\mathcal{A}_0(G)$ by right translation, i.e.\ $g\in G(\adeles)$ acts on an automorphic form $\phi$ via
\begin{align*}
(g\phi) (h):= \phi(hg)
\end{align*} for all $h\in G(\adeles)$.  Moreover, we have that $\mathcal{A}(G)$ and $\mathcal{A}_0(G)$ are $(\mathfrak{g}, \cpct_\infty)$-modules (in the sense of, e.g.\, \cite[Definition 2.3]{kudlamfar}), so are $(\mathfrak{g}, \cpct_\infty)\times G(\adeles_f)$-modules.

\begin{defi}
A $(\mathfrak{g}, \cpct_\infty)\times G(\adeles_f)$-module $(\pi, W)$ is {\em admissible} if each irreducible representation of $\cpct\times\cpct_\infty$ occurs with finite multiplicity in $W$. 
\end{defi}

\begin{defi}
An irreducible representation $\pi$ is called an  {\em automorphic representation} if it is an admissible representation such that $\pi$ is a subquotient of $\mathcal{A}(G)$.  
\end{defi}

\begin{defi}
 An automorphic representation is {\em cuspidal} (with central character $\chi$) if it occurs as a submodule of $\mathcal{A}_0(G)_\chi$ for some character $\chi$.
\end{defi}

\noindent We have 
\begin{align*}
\mathcal{A}_0(G) = \oplus_\pi m(\pi)\pi
\end{align*}
with the sum over all (isomorphism classes of) irreducible admissible cuspidal representations $\pi$ and $m(\pi)$ the multiplicity of $\pi$, which is always finite.

Given an irreducible representation $\pi$, we can write $\pi = \pi_\infty\otimes \pi_f$, with $\pi_\infty$ an irreducible representation at archimedean components and $\pi_f$ an irreducible representation of $G(\adeles_f)$.

\begin{defi}Let $\cpct$ be a compact open subgroup of $G(\adeles_f)$.
An automorphic representation $\pi$ is of {\em level} $\cpct$ if $\pi_f^\cpct\neq 0$ (where the superscript $\cpct$ denotes the subspace of $\cpct$-fixed vectors).
\end{defi}

Every irreducible admissible representation $\pi$ can be decomposed as a restricted tensor product
\begin{align*}
\pi=\otimes_v'\pi_v
\end{align*}
 of irreducible admissible representations $\pi_v$, as made precise in Proposition \ref{flaththm} below.  First, we briefly recall the notion of a representation $(\otimes'_v\pi_v, \otimes_v'W_v)$ of a group $H = \prod' H_v$ that is the restricted direct product with respect to subgroups $H'_v\subseteq H_v$.  Consider an infinite set of vector spaces $W_v$ indexed by a set $\Sigma$, and suppose that for all but finitely many $v\in \Sigma$ outside a finite subset of $S\subseteq \Sigma$, we have chosen a vector $\xi_v^\circ\in W_v$. 
 Let $\otimes'W_v$ be the restricted tensor product of the vector spaces $W_v$, i.e.\ the space $\otimes'W_v$ is spanned by vectors of the form $\otimes_v \xi_v$ with $\xi_v = \xi_v^\circ$ for all but finitely many $v$.
Let $(\pi_v, W_v)$ be a representation of a group $H_v$, and let $H'_v$ be a subgroup of $H_v$.  Let $H$ be the restricted direct product of the groups $H_v$ with respect to the subgroups $H'_v$.  Suppose that for all but finitely many $v$, there exists a vector $\xi_v^0\in W_v$ such that $H'_v$ fixes the vector $\xi_v^0$.  Then we define a representation $(\otimes'_v\pi_v, \otimes_v'W_v)$ of $H$ by
\begin{align*}
(\otimes_v'\pi_v)((g_v)_v) (\otimes \xi_v) = \otimes_v \pi_v(g_v)(\xi_v).
\end{align*}

\begin{prop}[Tensor Product Theorem \cite{flath}]\label{flaththm}
If $\pi$ is an irreducible, admissible representation of a connected reductive algebraic group $H$, then $\pi$ is isomorphic to a restricted tensor product
\begin{align*}
\pi\cong \otimes'_v \pi_v,
\end{align*}
where for all but finitely many nonarchimedean $v$, $\pi_v$ is an irreducible, admissible representation of $H(\realfield_v)$ that contains a nonzero $\cpct_v$-fixed vector $\xi_v^0$, and for each archimedean $v$, $\pi_v$ is an irreducible, admissible $(\mathfrak{g}, \cpct_v)$-module.
\end{prop}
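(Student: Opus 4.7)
The plan is to decompose $\pi$ in two stages, first separating the archimedean and non-archimedean parts, and then further factoring each piece by place.

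First, I would split the group: write $H(\adeles) = H(\IR) \times H(\adeles_f)$, so that $\pi$ factors canonically as $\pi_\infty \otimes \pi_f$, where $\pi_\infty$ is an irreducible admissible $(\mathfrak{g}, \cpct_\infty)$-module and $\pi_f$ is an irreducible admissible smooth representation of $H(\adeles_f)$. This separation uses Schur's lemma applied to the commuting actions of $(\mathfrak{g}, \cpct_\infty)$ and $H(\adeles_f)$ together with admissibility. Furthermore, because $\mathfrak{g} = \bigoplus_{v\mid\infty}\mathfrak{g}_v$, $\cpct_\infty = \prod_{v\mid\infty} \cpct_v$, and each factor acts through commuting subalgebras, the finite-dimensional $\cpct_\infty$-isotypic subspaces of $\pi_\infty$ decompose cleanly into tensor products, yielding the archimedean factorization $\pi_\infty \cong \bigotimes_{v\mid\infty} \pi_v$ with each $\pi_v$ an irreducible admissible $(\mathfrak{g}_v, \cpct_v)$-module.

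The main work is on the finite adelic side, and the essential tool is the Hecke algebra. For each compact open $\cpct = \prod_v \cpct_v \subseteq H(\adeles_f)$ with $\cpct_v = H(\calO_v)$ for almost all $v$, admissibility of $\pi_f$ ensures that $\pi_f^\cpct$ is finite dimensional, and it is an irreducible module over the Hecke algebra $\mathcal{H}(H(\adeles_f) /\!/ \cpct)$. The central algebraic input is the identification
\begin{align*}
\mathcal{H}(H(\adeles_f) /\!/ \cpct) \;\cong\; {\bigotimes_v}' \mathcal{H}(H_v /\!/ \cpct_v),
\end{align*}
a restricted tensor product with respect to the unit elements (normalized characteristic functions of $\cpct_v$) at the unramified places. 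Letting $\cpct$ shrink, one obtains an isomorphism of the full idempotented Hecke algebra $\mathcal{H}(H(\adeles_f))$ with the restricted tensor product ${\bigotimes_v}'\mathcal{H}(H_v)$ with respect to the idempotents $e_{\cpct_v^0}$ attached to a fixed choice $\cpct_v^0 = H(\calO_v)$ at almost all $v$.

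The core step is then a purely algebraic theorem on idempotented algebras: an irreducible non-degenerate module over a restricted tensor product ${\bigotimes_v}' A_v$ factors as a restricted tensor product of irreducible non-degenerate $A_v$-modules, almost all of which contain a vector fixed by the distinguished idempotent. I would prove this by choosing any non-zero vector $\xi^0 \in \pi_f^{\cpct^0}$ for a sufficiently small $\cpct^0 = \prod_v \cpct^0_v$ with $\cpct^0_v = H(\calO_v)$ at almost all $v$; at each $v$, define $\pi_v$ to be the $\mathcal{H}(H_v)$-submodule of $\pi_f$ generated by $\xi^0$ under the local algebra, and verify that the natural map ${\bigotimes_v}' \pi_v \to \pi_f$ is a non-zero $H(\adeles_f)$-equivariant map, hence an isomorphism by irreducibility. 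Admissibility forces $\pi_v^{\cpct_v^0} \neq 0$ at almost every $v$ (these are the spherical vectors $\xi_v^0$), and irreducibility of each $\pi_v$ follows from irreducibility of $\pi_f$ by a standard argument using that any proper submodule of some $\pi_v$ would yield a proper submodule of $\pi_f$ after taking the restricted tensor product with the other factors.

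The main obstacle is the algebraic lemma about restricted tensor products of idempotented algebras: in particular, proving irreducibility of each local factor $\pi_v$ and establishing that the natural multiplication map from the restricted tensor product into $\pi_f$ is injective requires care, because one must handle simultaneously infinitely many places and exploit the restricted structure to reduce to manipulations inside a fixed $\cpct$-fixed subspace. This is the technical heart of Flath's theorem, and the rest of the proof is a fairly formal unpacking of admissibility and the decomposition of Hecke algebras.
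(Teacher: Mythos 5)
The paper does not prove this proposition; it simply cites Flath's theorem and moves on, as is standard practice. Your sketch reproduces the standard proof due to Flath: split off the archimedean factor via commuting actions and Schur's lemma, factor $\pi_\infty$ over the (finitely many) archimedean places, then reduce the factorization of $\pi_f$ to (a) the isomorphism of $\mathcal{H}(H(\adeles_f))$ with the restricted tensor product of local Hecke algebras and (b) a purely algebraic lemma stating that an irreducible admissible non-degenerate module over a restricted tensor product of idempotented algebras factors as a restricted tensor product of irreducible local modules, almost all spherical. Your construction of the local factors $\pi_v$ as the $\mathcal{H}(H_v)$-submodule generated by a chosen spherical vector $\xi^0$, and your observation that the heart of the matter is proving irreducibility of each $\pi_v$ and injectivity of the multiplication map $\bigotimes_v' \pi_v \to \pi_f$, is exactly where Flath's argument does its real work. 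One minor slip: since $H$ is defined over $\realfield$ rather than $\IQ$, the archimedean factor should be $\prod_{v \mid \infty} H(\realfield_v)$ rather than $H(\IR)$, but this does not affect the argument. Overall your proposal is a faithful outline of the cited proof rather than an alternative route.
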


In our work with the doubling method in Section \ref{sec:doubling}, we will rely on the factorization guaranteed by Proposition \ref{flaththm}.  In practice, our restricted tensor product will be taken with respect to $\cpct$.

\subsection{$q$-expansions}\label{sec:qexp}
For the moment, we consider the case of automorphic forms on unitary groups of signature $(n,n)$ at infinity, and furthermore, we assume that the unitary group under consideration is quasi-split over $\realfield$.
We immediately see that for each $\alpha\in \hern(\IC)$, where $\hern$\index{$\hern$} denotes $n\times n$ Hermitian matrices,
\begin{align*}
\begin{pmatrix}
1_n & \alpha\\
0 & 1_n
\end{pmatrix}
\end{align*}
  is an element of $U(n,n)(\IR)$ and can be identified with the tuple $\begin{pmatrix}
1_n & \alpha_\sigma\\
0 & 1_n
\end{pmatrix}\in\prod_{\sigma\in\mathcal{T}_{\realfield}}U(n,n)(\IR)$ via
\begin{align}\label{map:alphsigma}
\alpha\mapsto \left(\alpha_\sigma\right)_{\sigma\in\mathcal{T}_{\realfield}},
\end{align}
where $\alpha_\sigma$ is the matrix whose $ij$-th entry is $\tau(\alpha_{ij})$ with $\alpha_{ij}$ the $ij$-th entry of $\alpha$ and $\tau$ the unique element of $\Sigma_\cmfield$ extending $\sigma$.  There is a $\ZZ$-lattice $M\subseteq \hern(\IC)$ such that $\begin{pmatrix}1_n &\alpha\\
0 & 1_n
\end{pmatrix}$ is in $\Gamma$ for all $\alpha\in M$.  So if $f$ is an $X$-valued automorphic form of level $\Gamma$, then for all $\alpha\in M$, we have
\begin{align*}
f(z+\alpha) = f(z)
\end{align*}
for all $z\in \CH = \prod_{\sigma\in\mathcal{T}_{\realfield}}\CH_n$.  
Let
\begin{align*}
M^\vee:=\left\{h\in \hern(\IC)\mid \tr_{\realfield/\IQ}(\tr(hM))\subseteq \ZZ\right\}.
\end{align*}
Then as explained in \cite[Lemma A1.4]{sh} (see also \cite[Section 5.6]{shar}), $f$ has a Fourier expansion
\begin{align}\label{equ:Fourierexpnz}
f(z)=\sum_{h\in M^\vee}c(h)\mathbf{e}(hz),
\end{align}
with $c(h)\in X$, $\mathbf{e}(hz) = e^{2\pi i\sum_{\sigma\in\mathcal{T}_{\realfield}} \tr(h_\sigma z_\sigma)}$ for each $h
 \in 
M^\vee$ (with $h_\sigma$ defined as in \eqref{map:alphsigma}), and $z = \left(z_\sigma\right)_{\sigma\in \mathcal{T}_{\realfield}}\in \prod_{\sigma\in\mathcal{T}_{\realfield}}\CH_\sigma$.  Sometimes we set
\begin{align}\label{equ:qtohz}
q^h:=\mathbf{e}(hz),
\end{align}
in analogue with how we write $q= e^{2\pi iz}$ in the Fourier expansion of a modular form (e.g.\ as in Equation \eqref{equ:G2k}).

By \cite[Proposition 5.7]{shar}, if $\realfield\neq\IQ$ or the signature is not $(1,1)$, then the Fourier coefficients $c(h)$ of any (holomorphic) automorphic form are $0$ unless $h = \left(h_\sigma\right)_{\sigma\in \mathcal{T}_{\realfield}}$ has the property that $h_\sigma$ is nonnegative at each $\sigma\in \mathcal{T}_{\realfield}$.  An automorphic form is a cusp form if the Fourier coefficients are nonzero only at positive definite matrices.

\begin{rmk}
In this section, we have focused on Fourier expansions of automorphic forms on unitary groups  that are of signature $(n,n)$ at infinity and that, furthermore, are quasi-split over $\realfield$.  More generally, though, for unitary groups not necessarily meeting these conditions, automorphic forms have {\em Fourier--Jacobi expansions}.  Fourier--Jacobi expansions play a similar role to Fourier expansions, except that the coefficients are not necessarily numbers and instead are certain functions called {\em theta functions}.  Like in the case of the Fourier expansions discussed above, these expansions are exponential sums.  Unlike the expansions above, though, the exponential function is only applied to a subset of the coordinates parametrizing the symmetric space associated to the unitary group.  (The theta function is then evaluated on other coordinates parametrizing the symmetric space.) While this is beyond the scope of this manuscript, a clear and concise summary of Fourier--Jacobi expansions, including their precise form, is provided on \cite[p. 1104]{garrettFJ}.
\end{rmk}

\begin{rmk}\label{rmk:qexp1}
In analogue with the algebraic $q$-expansion principle for modular forms (which says that algebraic modular forms are determined by their $q$-expansions), Kai-Wen Lan has proved an algebraic Fourier--Jacobi principle for unitary groups \cite[Proposition 7.1.2.14]{la}.  Lan has also proved that the algebraically defined Fourier--Jacobi coefficients agree with the analytically defined Fourier--Jacobi coefficients \cite{lancomparison}.  These facts will be important in our discussion of algebraicity for Eisenstein series in Section \ref{sec:algEseries}
\end{rmk}

\section{Automorphic $L$-functions for unitary groups}\label{autLfcnssection}

One reason we care about automorphic forms and automorphic representations is that they can be convenient tools for proving results about $L$-functions attached to particular arithmetic data (such as Galois representations), as mentioned in Section \ref{sec:intro}.  Like in familiar cases, e.g.\ the Riemann zeta function from Section \ref{sec:intro}, the $L$-functions with which we work have Euler products (analogous to $\zeta(s) = \prod_{p \mbox{ prime }}(1-1/p^s)^{-1}$ for $\Real(s)>1$), functional equations (analogous to $Z(s)= Z(1-s)$, where $Z(s)=\pi^{-\frac{s}{2}}\Gamma\left(\frac{s}{2}\right)\zeta(s)$), and meromorphic continuations to $\IC$.

\subsection{$L$-functions}
Given a number field $L$, for each unramified finite place $v$, let $\Frob_v$ denote a(n arithmetic) Frobenius conjugacy class in $\Gal(\overline{L}/L)$.  Let $S$ denote a finite set of places of $K$ containing the set $S_\infty$ of archimedean places of $K$ and the set $S_\ram$ of ramified places of $K$.  By the Chebotarev density theory, each continuous Galois representation
\begin{align*}
\rho: \Gal(\overline{L}/L)\rightarrow \GL_n(\IC)
\end{align*}
is completely determined by the set of elements $\rho(\Frob_v)$ with $v\nin S$.
The Euler product for the Artin $L$-function associated to $\rho$ is
\begin{align*}
L^S(s, \rho):=\prod_{v\nin S}L_v(s, \rho),
\end{align*}
where
\begin{align*}
L_v(s, \rho):=\det\left(1-\rho(\Frob_v)q_v^{-s}\right)^{-1},
\end{align*}
with $q_v$ the number of elements in the residue field at $v$.  Since $\rho(\Frob_v)$ is a semisimple conjugacy class and since any semi-simple conjugacy class $A$ is completely determined by its characteristic polynomial
\begin{align*}
\det(t-A),
\end{align*}
we see that $L_v(s, \rho)$ is is independent of our choice of representative for $\rho(\Frob_v)$.  Under certain conditions, the Langlands conjectures assert that suitable cuspidal automorphic representations $\pi$ have associated Galois representations $\rho_\pi$.  (In the one-dimensional case, class field theory achieves this.  For an introduction to the Langlands conjectures, beyond the scope of this manuscript, see e.g.\, \cite{arthurgelbart, langlands}.)  More precisely, the prediction is that under suitable conditions, a certain semisimple conjugacy class $\sigma_v(\pi)$ associated to $\pi = \otimes_v \pi_v$ (in the Langlands dual group ${ }^LG$, as explained in, e.g.\, \cite{cogdellLG, borel}) is the same as the conjugacy class of $\rho_\pi(\Frob_v)$, in which case we obtain as an immediate consequence that
\begin{align*}
L^S(s, \rho_\pi)& = L^S(s, \pi):=\prod_vL_v(s, \pi_v)\\
L_v(s, \pi_v)&:=\det(1-\sigma_v(\pi)q^{-s})^{-1}
\end{align*}
where $\sigma_v(\pi)$ is the semi-simple conjugacy class associated to $\pi_v$.
\begin{rmk}
In general, the $L$-function and associated Euler factors also take as input a representation $r$ of the Langlands dual group ${ }^LG$, i.e.\ one considers $L^S(s, \pi, r):=\prod_v\det(1-r(\sigma_v(\pi))q^{-s})^{-1}$.  Going forward, though, we will work with the doubling method, which only concerns the standard representation.  Hence, in this manuscript, we always take $r$ to be the standard representation, and we omit it from the notation in the input to our $L$-function.  
\end{rmk}  
If you are familiar with $L$-functions associated to modular forms, then you probably expect information about an associated Hecke algebra to show up in the definition of the local Euler factors $L_v(s, \pi_v)$.  Indeed, the semi-simple conjugacy class associated to $\pi_v$ is associated to information from a Hecke algebra, which is generated by double coset operators, in analogue with the situation from modular forms.  That is, for $g\in G(\adeles_f)$ and open compact subgroups $\cpct_1$ and $\cpct_2$, we define the {\em Hecke operator} $[\cpct_1g\cpct_2]$ from automorphic forms of level $\cpct_1$ to level $\cpct_2$ to be the action of the double coset $\cpct_2 g\cpct_1 = \sqcup g_i \cpct_1$ given by
\begin{align*}
[\cpct_2 g \cpct_1]f (h)= \sum_i f(hg_i).
\end{align*}
As noted in \cite[Equation (22)]{HELS}, we have
\begin{align*}
[\cpct_2 g \cpct_1] f = \sum_i \left[g_i\right]^\ast f,
\end{align*}
with $[g_i]^\ast$ the pullback of the map \eqref{equ:bracketh}.
When the level $\cpct$ is clear from context, we set
\begin{align*}
T(g) := [\cpct g\cpct ].
\end{align*}
For additional details of Hecke operators on automorphic forms on unitary groups, see, e.g.\, \cite[Sections 2.6.8 and 2.6.9]{HELS}.  There are also some standard choices of Hecke operators, generalizing the familiar Hecke operators $T_\ell$ from modular forms to the setting of unitary groups.

In analogue with the case of modular forms (on $\GL_2$), the Hecke operators generate a Hecke algebra.  We briefly summarize some key aspects of Hecke algebras here. (See, e.g.\, \cite[Section 1.4]{bellaiche} for a more detailed summary of Hecke algebras in our setting.)  Given a field $L$ of characteristic $0$ and a place $v$ of $\realfield$, we denote by $\CH(G(\realfield_v), \cpct_v, L)$ the algebra of compactly supported $L$-valued functions that are both left and right $\cpct_v$-invariant, together with the operation $*$ defined for $f_1, f_2\in\CH(G(\realfield_v), \cpct_v, L)$ by $(f*h)(g) = \int_{G(\realfield_v)} f(x)h(x^{-1}g)\,dx$. This is the {\em Hecke algebra} of $G$ with respect to $\cpct_v$ over $L$.  (For a more detailed introduction, see, e.g.\, \cite[Sections 3 and 4]{getz} or \cite[Section 2.1]{bellaiche}.) Given a smooth representation of $V$ of $G(\realfield_v)$ over $L$ (i.e.\ $V$ is an $L$-vector space on which $G(\realfield_v)$ acts continuously and such that each vector of $V$ is invariant under some compact open subgroup of $G(\realfield_v)$), $V^{\cpct_v}$ has a natural structure of a $\CH(G(\realfield_v), \cpct_v, L)$-module.  If $v$ does not ramify in $\cmfield$ and $\cpct_v$ is a maximal compact hyperspecial subgroup of $G(\realfield_v)$, then $\CH(G(\realfield_v), \cpct_v, L)$ is a commutative algebra.  If, furthermore, $V$ is an unramified representation of $G(\realfield_v)$ (i.e.\ $\dim V^{\cpct_v} = 1$), then the Hecke algebra $\CH(G(\realfield_v), \cpct_v, L)$ acts on $V^{\cpct_v}$ through a character
\begin{align*}
\CH(G(\realfield_v), \cpct_v, L)\rightarrow L.
\end{align*}
Thanks to a structure theorem proved by Satake, we have that $\CH(G(\realfield_v), \cpct_v, L)$ is finitely generated as an $L$-algebra, and, furthermore, when we specialize to the case of a representation $\pi$ like above, the collection of the values of these characters on a set of generators for $\CH(G(\realfield_v), \cpct_v, L)$ is an element of the semisimple conjugacy class $\sigma_v(\pi)$ associated to $\pi$ above \cite{satake}.
\begin{rmk}
For context, we remark briefly on how this description relates to the familiar cases of $\GL_1$ (Hecke characters) and $\GL_2$ (modular forms).  In the case of $GL_1$, then $\pi = \otimes_v\pi_v$ is a Hecke character $\adeles^\times\rightarrow \IC^\times$ (where $\adeles^\times$\index{$\adeles^\times$} denotes the ideles over $\IQ$).  At each $v$ where $\pi_v$ is unramified, $\pi_v$ is completely determined by its value on a uniformizer $\varpi_v$, and the set of values $\sigma_v(\pi): = \pi_v(\varpi_v)$ completely determines $\pi$.

In the case of $\GL_2$, we consider an irreducible cuspidal automorphic representation $\pi = \otimes_p\pi_p$ generated by a holomorphic cuspform $f(q) = q+\sum_{n\geq 2}a_nq^n$ of weight $k$ and level $1$.  (You can generalize to newforms of level $N$, if you would like.)  In this case, we have $T_p f =a_p f$ for all prime numbers $p$, and $\sigma_p(\pi_p)$ is in the conjugacy class of $\diag(\alpha_p, \beta_p)\in \GL_2(\IC)$, with $\alpha_p\beta_p = p^{k-1}$ and $\alpha_p+\beta_p = a_p$, i.e.\ the Euler factor at $p$ is the familiar Euler factor at $p$ for the $L$-function attached to the cusp form $f$.
\end{rmk}

\subsection{Strategy for proving algebraicity of certain values of $L$-functions}\label{sec:algtools}
In Section \ref{sec:intro}, we observed a connection between rationality of certain values of particular $L$-functions (e.g.\ the Riemann zeta function) and certain Eisenstein series.  For example, in Equation \eqref{equ:G2k}, we observed that the Riemann zeta function arises as the constant term of the Eisenstein series $G_{2k}$, and we noted that rationality of $\zeta(1-2k)$ then followed from properties of $G_{2k}$.  

From Section \ref{sec:intro}, it seems like if we want to prove rationality of $L$-functions more generally, an approach might be: {\em Try to relate the $L$-function in question to an Eisenstein series}.  That is an awfully vague strategy, though.  Supposing some version of this approach even works more generally, which Eisenstein series should we use, and in what sense should we ``relate'' our $L$-function to this Eisenstein series?

Motivated by the example of Shimura's proof in \cite{shimura-CM} of algebraicity of certain values of the Rankin--Selberg convolution that we recounted in Section \ref{sec:intro}, we propose the following recipe for investigating rationality properties:
\begin{enumerate}
\item{Find a Petersson-style pairing of automorphic forms (integrated against an Eisenstein series) that factors into an Euler product, has a functional equation, and can be meromorphically continued to all of $\IC$.}
\item{Prove the rationality of Eisenstein series occurring in that pairing.}
\item{Express a familiar automorphic $L$-function in terms of that pairing, similarly to Equation \eqref{equ:RSconvmf}, to obtain an expression analogous to Expression \eqref{equ:Dmfgalg}.}
\end{enumerate}
In addition to the case of Rankin--Selberg convolutions of modular forms from the introduction, this approach has been carried out in a number of cases, including by Shimura for Rankin--Selberg convolutions of Hilbert modular forms \cite{shimura-hilbert} and by Shimura's PhD student Jacob Sturm for a higher-rank generalization of the Rankin--Selberg convolution \cite{sturm-critical}.   We refer to that method as the {\em Rankin--Selberg method}.  

Although it turns out that the precise pairing in the Rankin--Selberg method does not work in certain settings (including for unitary groups, the main focus of this manuscript), we have substitutes in certain situations. For unitary groups, we have the {\em doubling method}, which is discussed in detail in Section \ref{sec:doubling}.  In addition to Shimura's work in the setting of unitary groups (e.g.\ in as compiled in \cite{sh, shar}), Harris has proved extensive results employing the doubling method to investigate the algebraicity of values of $L$-functions associated to automorphic representations of unitary groups (in particular, \cite[Theorem 3.5.13]{harriscrelle}), as well as associated results about rationality of Eisenstein series, including \cite{harrisannals, harrisbirkhauser, harriscohom1, harriscohom2, harrisdsv}.  One of the keys that enables proofs of algebraicity of certain values of $L$-functions (and associated Eisenstein series) is the realization of automorphic forms as sections over an integral model for a PEL-type moduli space, as in Section \ref{sec:sheafaut}.  

This recipe has also been carried out with certain other pairings.  For example, Harris proved algebraicity of critical values of $L$-functions attached to Siegel modular forms in \cite{hasv}, again relying heavily on the geometry of an associated Shimura variety.  Recently, using a pairing introduced by Aaron Pollack in \cite{pollack1}, the author, Giovanni Rosso, and Shrenik Shah have proved algebraicity of critical values of Spin $L$-functions for $GSp_6$.  Perhaps surprisingly, that pairing involves working with an Eisenstein series defined on a group that has no known Shimura variety or moduli problem.  As these examples show, the above recipe is powerful, even though specific details of how it is carried out in different situations can vary significantly.

\begin{rmk}
Finding a pairing in terms of which one can express an $L$-function, as required for the above three-step recipe, is highly non-trivial and is a serious research problem on its own.  So even though we just provided several examples where the recipe has been successfully carried out, one is not guaranteed to be in a position to carry out even the first step.  Furthermore, even if one has such a pairing, it is not guaranteed that the pairing will be suitable for obtaining results about algebraicity.  (Such pairings are often useful for studying analytic aspects of $L$-functions, hence the interest in them even when they seem not to be well-suited to proving algebraicity results.)  For example, the pairing introduced in \cite{BumpGinzburg} for Spin $L$-functions for $\GSp_{2n}$, for $n=3, 4, 5$, appears not to be amenable to proving algebraicity results.  That is also the case for the approach to ``twisted doubling'' in  \cite{CFGK}.  In general, proofs of algebraicity rely on the pairing and its input having an algebraic or geometric interpretation.
\end{rmk}

\begin{rmk}
Here, we are focusing on the use of Eisenstein series as a tool for proving algebraicity of certain values of $L$-functions.  Eisenstein series also play a key role in governing analytic behavior, like the functional equation and meromorphic continuation, of $L$-functions.  This is the case, for example, with the {\em Langlands--Shahidi method}, which realizes the reciprocals of certain $L$-functions in the constant terms of Fourier expansions of Eisenstein series \cite{shahidibook, shahidi}.  There are also other approaches to investigating algebraicity, e.g.\ \cite{HaRa}.
\end{rmk}

\begin{rmk}\label{rmk:padicrecipe}
As noted in Section \ref{sec:whythesetopics}, all known methods for constructing $p$-adic $L$-functions are adaptations of the specific techniques used to prove algebraicity results for the corresponding $\IC$-valued $L$-functions.  In fact, Haruzo Hida's approach to constructing $p$-adic Rankin--Selberg $L$-functions in \cite{hi85} builds directly on Shimura's proof of algebraicity of Rankin--Selberg convolutions summarized above.  Similarly, the construction of $p$-adic $L$-functions for unitary groups in \cite{HELS, EW} builds on the work with the doubling method.  As noted in Remark \ref{rmk:damerell}, the doubling method specializes in its simplest case to a variant of {\em Damerell's formula}, a key ingredient in the proof of algebraicity for $L$-functions associated to Hecke characters of CM fields, which in turn led to Katz's construction of $p$-adic $L$-functions associated to Hecke characters of CM fields \cite{kaCM}.
\end{rmk}

\begin{rmk}
Our focus is on algebraicity of the values of $L$-functions.  In analogue with Expression \eqref{equ:Dmfgalg}, we are especially interested in deriving results of the form
\begin{align*}
\frac{L(\pi, m)}{\langle \varphi, \varphi\rangle}\in \pi^c\bar{\IQ},
\end{align*}
where $L(\pi, m)$ is the value at some integer $m$ of an $L$-function associated to an irreducible cuspidal automorphic representation $\pi$ that contains a cusp form $\varphi$ and $\pi^c$ denotes a power of the transcendental number $\pi$.  (Results of this form arise from applications of {\em integral representations of $L$-functions}, which express certain automorphic $L$-functions as integrals of automorphic forms, like in the doubling method in Section \ref{sec:doubling} below and the Rankin--Selberg method.  This approach is seen, for example, in the beginnings of the introductions to \cite{PSS, FuMo}.) In Section \ref{sec:intro}, we also mentioned Deligne's conjectures about the meanings of the values of $L$-functions at certain points.  Although our focus in the next sections will be on proving algebraicity results like those above, we briefly bring up this still more challenging aspect now.  Deligne defined an integer $m$ to be {\em critical} for an $L$-function if neither $m$ nor the point symmetric to it with respect to the central point of the $L$-function are poles of the $\Gamma$-factors of the $L$-function (analogues of the factors that show up in the functional equation for the Riemann zeta function like at the beginning of Section \ref{autLfcnssection}).  Deligne predicts that the {\em critical values} (i.e.\ values at the critical points) of an $L$-function associated to a {\em motive} $M$ are rational multiples of the {\em period} of $M$ (an algebraic invariant of $M$ coming from cohomology).  In general, the values of automorphic $L$-functions are not readily seen to satisfy Deligne's conjecture, even when we have algebraicity results.  By exploiting geometry in the unitary setting, though, Harris explained connections between his algebraicity results and motivic periods in the case of $\realfield =\IQ$ in \cite{harriscrelle}, and this work was later extended by his PhD student Lucio Guerberoff to the case of $\realfield$ of degree $>1$ in \cite[Theorem 4.5.1]{guerberoff}.  These results and related ones are surveyed in \cite{harrislin}.
\end{rmk}

\begin{rmk}
In the above three-step recipe, we skipped over the fact that the Eisenstein series with which we need to work are not necessarily holomorphic.  At the beginning of this manuscript, in Equation \eqref{equ:G2k}, we recalled the holomorphic Eisenstein series $G_{2k}$.  Shortly after that, though, we encountered a non-holomorphic Eisenstein series in Equation \eqref{equ:RSconvmf}.  The prototypical example from the setting of modular forms of weight $\lambda$, level $N$, and character $\chi$ is 
\begin{align*}
E_{\lambda, N}(z, s, \chi) = \sum_{(0, 0)\neq(m, n)\in\ZZ\times\ZZ} \frac{\chi(n)}{(mNz+n)^\lambda |mNz+n|^{2s}},
\end{align*}
which converges for $\re(2s)>2-\lambda$.
In fact, the Eisenstein series denoted by $E$ in Equation \eqref{equ:RSconvmf} is the holomorphic modular form $E^\ast_{\lambda, N}(z, \chi)$ defined to be specialization to $s=0$ of $E^\ast_{\lambda, N}(z, s, \chi):=\frac{E_{\lambda, N}(z, s, \chi)}{2L(2s+\lambda, \chi)}$, but we also need to work with Eisenstein series at points $s$ where the Eisenstein series $E^\ast_{\lambda, N}(z, s, \chi)$ is not holomorphic.  Such Eisenstein series can be obtained via application of the Maass--Shimura operator $\delta_\lambda^{(r)}$ from Section \ref{sec:intro}, which is defined by $\delta_\lambda^{(r)}:=\delta_{\lambda+2r-2}\circ\cdots \circ \delta_{\lambda+2}\circ\delta_\lambda$ with $\delta_\lambda:=\frac{1}{2\pi i}\left(\frac{\lambda}{2iy}+\frac{\partial}{\partial z}\right)$.  We have 
\begin{align*}
E^\ast_{\lambda+2r, N}(z, -r, \chi) = \frac{\Gamma(\lambda)}{\Gamma(\lambda+r)}(-4\pi y)^r\delta_\lambda^{(r)}E_{\lambda, N}^\ast(z, \chi),
\end{align*}
i.e.\ $E^\ast_{\lambda+2r, N}(z, -r, \chi)$ is a scalar multiple of the term $\delta_\lambda^{(r)}E$ in Expression \eqref{equ:RSconvmf} from Section \ref{sec:intro}.  

At $s\neq 0$, the Eisenstein series $E_{\lambda, N}^\ast(z, s, \chi)$ are clearly not holomorphic, although they are $\ci$.  This might seem potentially problematic for carrying out our recipe concerning algebraicity.  Fortunately, though, the operators $\delta_\lambda$ can be reformulated geometrically over a modular curve $\moduliint_N(\IC)$, and that geometric formulation provides enough structure to preserve algebraic aspects of modular forms.  In brief, the map of sheaves 
\begin{align*}
\mathcal{E}_\lambda\rightarrow\mathcal{E}_{\lambda+2}
\end{align*}
corresponding to $\delta_\lambda$ is induced by the composition of maps
\begin{align}\label{diffopcomp}
\uo^{\otimes k}\hookrightarrow (\hdrone)^{\otimes k}\xrightarrow{\nabla} (\hdrone)^{\otimes k}\otimes\Omega_{\moduliint_N}\isomto(\hdrone)^{\otimes k}\otimes\uo^{\otimes 2}\twoheadrightarrow\uo^{k+2},
\end{align}
where $\hdrone:={\mathbf R}^1_{\pi_\ast}\left(\Omega^\bullet_{\CA/\moduliint_N}\right)$, $\nabla$ is the Gauss--Manin connection, the isomorphism is the identity map tensored with the Kodaira--Spencer morphism, and the surjection is the projection onto the first factor of the Hodge decomposition $\uo\oplus H^{0, 1}$.  Each of these maps is algebraic and can be defined over any $\mathcal{O}_{\reflex, (p)}$-algebra, except for the final one.  The final map has enough structure, though, that it preserves essential algebraic structure.  In particular, when applied to a holomorphic modular form, $\delta_\lambda$ preserves algebraicity at CM points, which is essential for proving algebraicity via the aforementioned Damerell's formula.  In addition, the {\em holomorphic projection} $H(g\delta_\lambda^{(r)}E)$ is a holomorphic modular form with the property $\langle f, H(g\delta_\lambda^{(r)}E)\rangle = \langle f, g\delta_\lambda^{(r)}E\rangle$, so $\langle f, H(g\delta_\lambda^{(r)}E)\rangle$ still becomes a scalar multiple of $\langle f, f \rangle$ and we still obtain Expression \eqref{equ:Dmfgalg}.

The composition of maps \eqref{diffopcomp} can naturally be formulated over our PEL moduli space $\moduliint_\cpct$ to construct differential operators on unitary groups, and these operators preserve similar algebraic properties of automorphic forms in this setting.  We do not elaborate on them here.  Note, though, that more book-keeping is involved in this setting, the resulting forms in this case can be vector-valued, and the Kodaira--Spencer map in this case is
\begin{align*}
\Omega_{\moduliint_{\cpct}}\isomto\otimes_{\tau\in\mathcal{T}_{\realfield}}\uo_\tau^+\otimes\uo_\tau^-.
\end{align*}
The Maass--Shimura differential operators and their algebraicity properties have been studied in detail for automorphic forms on unitary groups and related cases in, e.g.\, \cite{hasv, ha86, EDiffOps, EFMV, EiMa2, shar, shclassical, shclassnearhol, kaCM, zheng}.
\end{rmk}

\begin{rmk}
In the remainder of this section, we highlight the ingredients needed to carry out our three-step recipe for proving algebraicity for automorphic $L$-functions associated to cuspidal automorphic representations of unitary groups.  We emphasize aspects that are unlikely to be viewed as ``straightforward'' extensions of the tools occurring in the setting of Shimura's work with Rankin--Selberg convolutions of modular forms, and consequently, a detailed introduction to the doubling method forms a large portion of the remainder of this section.
\end{rmk}

\subsection{The doubling method, in the setting of unitary groups}\label{sec:doubling}
We now introduce the aforementioned {\em doubling method}, which provides an integral representation of our $L$-functions, i.e.\ a global integral that unfolds to a product of local integrals in terms of which we can realize an Euler product for our $L$-functions.  The approach we present here is due to \cite{PSR, ga}.  There are also helpful notes in \cite{cogdell}.  The doubling method is an instance of a {\em pullback method} or Rankin--Selberg style integral, i.e.\ we will be integrating a pullback of an Eisenstein series against a pair of cusp forms.  As noted in \cite[Section 1]{PSR}, the doubling method is based on the Rankin--Selberg method (a generalization to $\GL_n$ of the Rankin--Selberg convolution from Equation \eqref{equ:RSconvmf}), but unlike the Rankin--Selberg method, it does not require uniqueness of Whittaker models (and hence allows us to work with unitary groups).  The doubling method can also be formulated algebraically geometrically so that we can study algebraic aspects of values of $L$-functions. 

\subsubsection{Setup for the doubling method}\label{sec:doublingsetup}
To begin, let 
\begin{align*}
U := U(V, \langle, \rangle)
\end{align*}
 be as in Definition \ref{unitarygpdefn} (with $V$ an $n$-dimensional vector space over the CM field $\cmfield$).  Let $W=V\oplus V$.  We define a Hermitian pairing $\langle, \rangle_W$ on $W$ by
\begin{align*}
\langle (u, v), (u', v')\rangle_W:=\langle u, u'\rangle_V-\langle v, v'\rangle_V
\end{align*}
for all $u, v, u', v'\in V$.  Following the conventions of \cite[Equation (1.1.7)]{sh}, we sometimes denote the pairing $\langle, \rangle_W$ by 
\begin{align}\label{opluspairing}
\langle,\rangle\oplus-\langle, \rangle. 
\end{align}\index{$\langle,\rangle\oplus-\langle, \rangle$}\ignorespacesafterend  Let 
\begin{align*}\index{$U_W$}
U_W:=U(W, \langle, \rangle_W).
\end{align*}  Then $U_W$ is a unitary group of signature $(n,n)$ at each place, and we have an embedding
\begin{align}\label{doublingembedding}
U\times U = U(V, \langle, \rangle)\times U(V, -\langle, \rangle)\hookrightarrow U_W,
\end{align}
via $(g, g')(u, v) := (gu, g'v)$ for all $g, g'\in G$ and $u, v\in V$.
This {\em doubling} of our group is what leads to the name {\em doubling method}.
(This is the formulation of unitary groups in the original setup for the doubling method in \cite{PSR, ga}.  In Section \ref{sec:doublingPELvarieties}, we will also address the relationship with the unitary groups associated to PEL data, by giving a set of PEL data that induces an analogous embedding.)  \begin{rmk}
The doubling method is formulated in terms of integrals over the groups $U(\adeles_{\realfield})$ and $U(\realfield_v)$ and subquotients of these groups.  For these integrals, we work with a Haar measure on these groups.  For the purposes of this manuscript, we do not need to be more precise.  The reader seeking more information about appropriate choices of normalizations could consult \cite[Section 1.4.3]{HELS}.
\end{rmk}

Let $P$\index{$P$ Siegel parabolic subgroup of $U_W$} be the Siegel parabolic subgroup of $U_W$ preserving the maximal isotropic subspace
\begin{align*}\index{$V^\Delta$}
V^\Delta:=\left\{(v, v)\mid v\in V\right\}\subset W.
\end{align*}
We also define
\begin{align*}\index{$V_\Delta$}
V_\Delta:=\left\{(v, -v)\mid v\in V\right\}\subset W.
\end{align*}
With respect to the decomposition $W=V^\Delta\oplus V_\Delta$, we have that for each $\realfield$-algebra $R$,
\begin{align}\label{equ:PorR}
P(R)=\left\{\begin{pmatrix}A & 0\\ 0 & { }^t\bar{A}^{-1}\end{pmatrix}\begin{pmatrix}1_n & X\\ 0& 1_n\end{pmatrix}\mid A\in  GL_{\cmfield\otimes_{\realfield} R}(V\otimes_{\realfield} R) \mbox{ and } X\in\hern(\cmfield\otimes_{\realfield}R)\right\}.
\end{align}
Given a $\realfield$-algebra $R$ and a character $\psi$ of $(\cmfield\otimes_{\realfield}R)^\times$, we view $\psi$ as a character of $P(R)$ via
\begin{align*}
\begin{pmatrix}A & B\\ 0 & { }^t\bar{A}^{-1}\end{pmatrix}\mapsto\psi(\det A).
\end{align*}

\subsubsection{Siegel Eisenstein series}
Let $\chi: \cmfield^\times\backslash\adeles_\cmfield^\times\rightarrow \IC^\times$ be a unitary Hecke character.  Given $s\in\IC$, let 
\begin{align*}\index{$f_{s,\chi}$}
f_{s,\chi}: U_W(\adeles_{\realfield})\rightarrow \IC
\end{align*}
 be an element of
\begin{align*}\index{$I(s, \chi)$}
I(s, \chi):=\Ind_{P(\adeles_{\realfield})}^{U_W(\adeles_{\realfield})}(\chi| \cdot |^{-s}):=\left\{f: U_W(\adeles_{\realfield})\rightarrow \IC | f(ph) = \chi(p)|p|^{-s+n/2}f(h)\right\},
\end{align*}
with the absolute value here denoting the adelic norm.  (In practice, we usually restrict ourselves to the smooth, $\cpct$-finite functions in $I(s, \chi)$.)
 The elements $\alpha\in U_W(\adeles_{\realfield})$ act on elements 
 \begin{align*}
 f\in \Ind_{P(\adeles_{\realfield})}^{U_W(\adeles_{\realfield})}(\chi| \cdot |^{-s})
 \end{align*}
  via
\begin{align*}
(\alpha f)(h) = f(h\alpha)
\end{align*}
for all $h\in U_W(\adeles_{\realfield})$.  The induced representation $I(s, \chi)$ factors as a restricted tensor product $\otimes_v\Ind_{P(\realfield_v)}^{U_W(\realfield_v)}\left(\chi_v|\cdot|^{-s}\right)$, with the induced representation for the local groups defined analogously.

We define a {\em Siegel Eisenstein series} on $U_W$ by
\begin{align}\index{$E_{f_{s, \chi}}$}\label{equ:siegeleseries}
E_{f_{s, \chi}}(h):=\sum_{\gamma\in P(\realfield)\backslash U_W(\realfield)}f_{s, \chi}(\gamma h).
\end{align}
This series converges absolutely and uniformly for $\Real(s)>\frac{n}{2}$.  In addition, $E_{f_{s, \chi}}$ is a meromorphic function of $s$ and is an automorphic form in $h$.  Define a Hecke character $\check{\chi}$ by
\begin{align*}
\check{\chi}(x):=\chi(\bar{x})^{-1},
\end{align*}
let $N_P$ denote the unipotent radical of $P$, and let $w$ be the Weyl element interchanging $V^\Delta$ and $V_{\Delta}$.  The Eisenstein series $E_{f_{s, \chi}}(h)$ satisfies the functional equation
\begin{align*}
E_{f_s, \chi}(h) = E_{M(s, \chi)f_{s, \chi}}(h),
\end{align*}\index{$M(s, \chi)$}\ignorespacesafterend
where $M(s, \chi): I(s, \chi)\rightarrow I(1-s, \check{\chi})$ is the intertwining operator defined for $\Real(s)>\frac{n}{2}$ by
\begin{align*}
[M(s, \chi)f_{s, \chi}](h)&=\int_{N_P(\adeles_{\realfield})}f_{s, \chi}(wnh)\,dn\\
&=\int_{\hern(\adeles_\cmfield)} f_{s,\chi}\left(w\begin{pmatrix}1& X\\ 0 & 1\end{pmatrix} h\right)\,dX.
\end{align*}

\subsubsection{The doubling integral}
Let $\pi$ be an irreducible cuspidal automorphic representation of $U$, and let $\pi'$ denote the contragredient of $\pi$.  Given $\varphi\in\pi$ and $\varphi'\in\pi'$, we define the doubling integral by
\begin{align}\label{doublintegral}\index{$Z\left(\varphi, \varphi', f_{s, \chi}\right)$ the doubling integral}
Z\left(\varphi, \varphi', f_{s, \chi}\right):= \int_{[U\times U](\realfield)\backslash[U\times U](\adeles_{\realfield})}E_{f_{s, \chi}}(g, h)\varphi(g)\varphi'(h)\chi^{-1}(\det h)\,dg\,dh.
\end{align}
\noindent The notation $[U\times U]$\index{$[U\times U]$} here denotes the image of $U\times U$ inside $U_W$.
\begin{rmk}
The character $\chi$ is absent from the original treatment in \cite{PSR}, but it is a straightforward exercise to show that if we define the Eisenstein series as above to include $\chi$, then the doubling integral must take this form.  Indeed, we include $\chi$ in our analysis below.
\end{rmk}
The integral $Z\left(\varphi, \varphi', f_{s, \chi}\right)$ inherits the analytic properties of Eisenstein series.  In particular, $Z\left(\varphi, \varphi', f_{s, \chi}\right)$ extends to a meromorphic function of $s$ and satisfies the functional equation
\begin{align*}
Z\left(\varphi, \varphi', f_{s, \chi}\right)=Z\left(\varphi, \varphi', M(s, \chi)f_{s, \chi}\right).
\end{align*}
The doubling integral $Z\left(\varphi, \varphi', f_{s, \chi}\right)$ plays a role in our setting analogous to the role played by the Rankin--Selberg integral for $\GL_n$.

\begin{rmk}\label{rmk:damerell}
Note that in the case of definite groups (i.e.\ signature$(n,0)$), we can choose our input data so that the integral unfolds into a finite sum of values of automorphic forms and can be reinterpreted as values of those automorphic forms at CM points of the corresponding Shimura variety on which the Eisenstein series is defined.  In the special case where $n=1$, this allows us to recover a variant of {\em Damerell's formula}, an expression of values $L(0, \chi)$, with $\chi$ a Hecke character of type $A_0$ on a CM field, as a finite sum of values of a Hilbert modular form at CM Hilbert--Blumenthal abelian varieties.
Damerell initiated the study of the algebraicity properties of these values, and this study was later completed by Goldstein--Schappacher \cite{GS1, GS2}, Shimura \cite{shimura-CM}, and Weil \cite{weil1}.  For those seeking more details, note that a nice history of the development of Damerell's formula is provided in \cite[\S5]{harder-schappacher}.
\end{rmk}

\subsubsection{Unfolding the doubling integral into an Euler product}

\begin{thm}\label{doubletwotoone}We have the equality
\begin{align*}
Z\left(\varphi, \varphi', f_{s, \chi}\right)=\int_{U(\adeles_{\realfield})}f_{s, \chi}(g, 1)\langle\pi(g)\varphi, \varphi'\rangle \,dg,
\end{align*}
where
\begin{align}\label{equ:invtpairing}
\langle \varphi, \varphi'\rangle:=\int_{U(\realfield)\backslash U(\adeles_{\realfield})}\varphi(g)\varphi'(g)\,dg.
\end{align}
\end{thm}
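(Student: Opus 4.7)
The plan is to execute the standard unfolding argument of Piatetski--Shapiro and Rallis. First I would insert the definition \eqref{equ:siegeleseries} of $E_{f_{s,\chi}}$ into \eqref{doublintegral} and interchange sum and integration, which is legal for $\Real(s) > n/2$ by absolute convergence (with the final identity then extended by meromorphic continuation in $s$). Collapsing the $P(\realfield)$-sum against integration over the quotient $[U\times U](\realfield)\backslash [U\times U](\adeles_{\realfield})$ gives
\begin{align*}
Z(\varphi, \varphi', f_{s,\chi}) = \sum_{\gamma} \int_{\Stab_\gamma(\realfield)\backslash [U\times U](\adeles_{\realfield})} f_{s,\chi}(\gamma(g,h))\,\varphi(g)\,\varphi'(h)\,\chi^{-1}(\det h)\,dg\,dh,
\end{align*}
where $\gamma$ ranges over representatives for $P(\realfield)\backslash U_W(\realfield)/[U\times U](\realfield)$ and $\Stab_\gamma := \gamma^{-1}P\gamma \cap [U\times U]$.

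The central geometric input is the analysis of $P$-orbits on $U_W/[U\times U]$, equivalently of $[U\times U]$-orbits on maximal isotropic subspaces of $W$ classified by their relative position with $V^\Delta$. A Witt-style argument using the decomposition $W = V^\Delta \oplus V_\Delta$ shows that this double-coset space is finite and that $\gamma = 1$ represents the unique \emph{open} orbit: since $(g,h)\cdot V^\Delta = V^\Delta$ iff $gv = hv$ for all $v \in V$, one has
\begin{align*}
\Stab_1 = P \cap [U\times U] = U^{\Delta} := \{(u,u) : u \in U\}.
\end{align*}
For each non-open orbit, $\Stab_\gamma$ projects into one of the $U$-factors with image containing the unipotent radical $N_Q$ of a proper parabolic subgroup $Q \subsetneq U$; the corresponding summand then contains an inner period $\int_{N_Q(\realfield)\backslash N_Q(\adeles_{\realfield})}\varphi(n\,\cdot)\,dn$ (or the analogous period of $\varphi'$), which vanishes identically by cuspidality. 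Hence only the open-orbit term survives.

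To evaluate the open-orbit integral, I would exploit that $(h,h) \in P(\adeles_{\realfield})$ with Siegel--Levi ``$A$-block'' equal to $h$ in the parametrization \eqref{equ:PorR}, so that $f_{s,\chi}((h,h)\,x) = \chi(\det h)\,f_{s,\chi}(x)$ for every $x \in U_W(\adeles_{\realfield})$ (the modulus character $|\,\cdot\,|^{-s + n/2}$ contributes trivially because $|\det h|_{\adeles_\cmfield} = 1$ for $h \in U(\adeles_{\realfield})$). Parametrize $U^{\Delta}(\realfield)\backslash [U\times U](\adeles_{\realfield})$ by $(k,h) \in U(\adeles_{\realfield}) \times \bigl(U(\realfield)\backslash U(\adeles_{\realfield})\bigr)$ via $(k,h) \mapsto (hk^{-1}, h) = (h,h)(k^{-1},1)$, so that
\begin{align*}
f_{s,\chi}(hk^{-1}, h)\,\chi^{-1}(\det h) = f_{s,\chi}(k^{-1}, 1),
\end{align*}
and the character twist cancels exactly. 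The inner $h$-integral becomes $\int_{U(\realfield)\backslash U(\adeles_{\realfield})}\varphi(hk^{-1})\varphi'(h)\,dh = \langle \pi(k^{-1})\varphi, \varphi'\rangle$ by definition of right translation and the invariant pairing \eqref{equ:invtpairing}, and after the substitution $k \mapsto k^{-1}$ (which preserves Haar measure on $U(\adeles_{\realfield})$), the outer integral yields the claimed identity.

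The main obstacle is the geometric orbit analysis together with the cuspidality-based vanishing on non-open orbits: the former reduces to a careful application of Witt's extension theorem for the Hermitian form on $W$ to verify that $P\cdot 1 \cdot [U\times U]$ is the open orbit and that the remaining finitely many orbits have stabilizers of the claimed parabolic type; the latter requires confirming in each non-open case that the stabilizer contains the \emph{full} unipotent radical of a proper parabolic of $U$ (rather than merely some unipotent subgroup, which would not suffice), so that the defining cuspidal period of $\varphi$ or $\varphi'$ genuinely appears as an inner integral. Once these geometric inputs are in place, what remains is routine bookkeeping with the $P$-equivariance of $f_{s,\chi}$ and the definition of the pairing.
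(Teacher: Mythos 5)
Your proposal matches the paper's argument: it is exactly the Piatetski--Shapiro--Rallis unfolding, with the main orbit $\gamma=1$ yielding $U^\Delta$ as stabilizer and the $\chi(\det h)$-twist absorbed into the $P$-equivariance of $f_{s,\chi}$, and the remaining orbits killed by cuspidality. One small imprecision worth noting: where you say $\Stab_\gamma$ ``projects into one of the $U$-factors with image containing $N_Q$,'' the paper in fact establishes the stronger facts that are actually needed, namely that $N^\gamma = N^+\times N^-$ sits inside $\Stab_\gamma$ as a \emph{normal} subgroup with both $N^\pm$ nontrivial unipotent radicals of proper parabolics $P^\pm\subset U$ (via the identification of the $[U\times U]$-orbits with the spaces $X_d$ of maximal isotropics with $\dim_\cmfield L\cap V^\pm = d>0$) --- normality is what lets you factor the integral over $\Stab_\gamma(\realfield)\backslash[U\times U](\adeles_\realfield)$ through an inner integral over $N^\gamma(\realfield)\backslash N^\gamma(\adeles_\realfield)$, and the subgroup containment (not merely a projection) is what makes that inner integral a genuine cuspidal period; you do flag the ``full unipotent radical'' point at the end, but the normality step deserves the same flag.
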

\begin{rmk}
The pairing $\langle, \rangle$\index{$\langle, \rangle$ unique $U$-invariant pairing} is the unique $U$-invariant pairing of $\pi$ with $\pi'$, up to a constant multiple.  We likewise also write $\langle, \rangle$ for the corresponding integral over $U(\realfield_v)$.
\end{rmk}

\begin{rmk}
It is natural to wonder what we might get if we replace $\pi'$ by some other irreducible cuspidal automorphic representation $\tilde{\pi}\ncong\pi'$ and $\varphi'\in \tilde{\pi}.$  In that case, we would have $\langle \pi(g)\varphi, \varphi'\rangle=0$, and as a consequence of Theorem \ref{doubletwotoone}, we would then have that $Z\left(\varphi, \varphi', f_{s, \chi}\right)$ is identically zero.\end{rmk}

\begin{rmk}\label{rmkdconds}
Going forward, we suppose we have factorizations $\pi=\otimes_v'\pi_v$ and $\pi'=\otimes'_v\pi_v'$ (where these restricted tensor products are over the places of $\realfield$), and we suppose $\varphi=\otimes_v'\varphi_v$ with $\varphi_v\in\pi_v$ and $\varphi'=\otimes_v'\varphi_v'$ with $\varphi_v'\in\pi_v'$.  Let $S$ be the set of all finite places of $\realfield$ that ramify in $\cmfield $ or where $\pi$, $\pi'$, and $\chi$ are ramified.  For all finite places $v\nin S$, let $\varphi_v\in \pi_v$ and $\varphi_v'\in\pi_v'$ be normalized $\cpct$-fixed vectors such that $\langle \varphi_v, \varphi_v'\rangle = 1$.
We also suppose that $f_{s, \chi} = \otimes_v'f_{s, \chi_v}$, with $f_{s, \chi_v}\in I_v(s, \chi):=\Ind_{P(\realfield_v)}^{H(\realfield_v)}\left(\chi_v|\cdot|^{-s}\right)$.\index{$I_v(s, \chi)$}
\end{rmk}

\begin{cor}\label{coroEP}
The integral $Z\left(\varphi, \varphi', f_{s, \chi}\right)$ factors as an Euler product (over all places $v$ of $\realfield$):
\begin{align*}
Z\left(\varphi, \varphi', f_{s, \chi}\right)=\langle \varphi, \varphi'\rangle\prod_vZ_v(\varphi_v, \varphi_v', f_{s, \chi_v}),
\end{align*}
where
\begin{align*}
Z_v(\varphi_v, \varphi'_v, f_{s, \chi_v})=\frac{\int_{U(\realfield_v)}f_{s, \chi_v}(g_v, 1)\langle\pi_v(g_v)\varphi, \varphi'\rangle \,dg_v}{\langle \varphi_v, \varphi_v'\rangle},
\end{align*}
with the denominator equal to $1$ for all $v\nin S$.
\end{cor}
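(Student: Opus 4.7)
The plan is to deduce the Euler product directly from Theorem \ref{doubletwotoone}, which already rewrites $Z(\varphi,\varphi',f_{s,\chi})$ as the single integral $\int_{U(\adeles_\realfield)} f_{s,\chi}(g,1)\,\langle\pi(g)\varphi,\varphi'\rangle\,dg$. What remains is to factor the integrand over the places of $\realfield$ and to interchange integration with the resulting (restricted) product. By definition of the restricted tensor product, $f_{s,\chi}(g,1) = \prod_v f_{s,\chi_v}(g_v,1)$ with all but finitely many factors equal to the value of the $\cpct_v$-spherical section at the identity. It therefore suffices to factor the matrix coefficient $\langle\pi(g)\varphi,\varphi'\rangle$ as $\prod_v \langle\pi_v(g_v)\varphi_v,\varphi'_v\rangle$ and then apply Fubini.

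The factorization of the matrix coefficient is the main conceptual step. Each space $\Hom_{U(\realfield_v)}(\pi_v\otimes\pi'_v,\IC)$ of $U(\realfield_v)$-invariant bilinear pairings is at most one-dimensional (since $\pi'_v$ is the contragredient of the irreducible $\pi_v$), so local pairings are unique up to scalar. The global pairing \eqref{equ:invtpairing} is, by convention, normalized so that on pure tensors $\langle\varphi,\varphi'\rangle = \prod_v \langle\varphi_v,\varphi'_v\rangle$; this product is actually a finite product, in light of the normalization in Remark \ref{rmkdconds}. Applying the same factorization to the translated pure tensor $\pi(g)\varphi = \otimes_v \pi_v(g_v)\varphi_v$ yields $\langle\pi(g)\varphi,\varphi'\rangle = \prod_v \langle\pi_v(g_v)\varphi_v,\varphi'_v\rangle$, as desired.

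With the integrand in product form, I would work in a right half-plane where the Siegel Eisenstein series \eqref{equ:siegeleseries} converges absolutely, use the structure of $U(\adeles_\realfield)$ as the restricted direct product of the $U(\realfield_v)$ against the hyperspecial maximal compact subgroups, and apply Fubini to split the global integral as the product of local integrals $\int_{U(\realfield_v)} f_{s,\chi_v}(g_v,1)\,\langle\pi_v(g_v)\varphi_v,\varphi'_v\rangle\,dg_v$. Multiplying and dividing each local factor by $\langle\varphi_v,\varphi'_v\rangle$ yields the ratio $Z_v$ of the statement and pulls out the global constant $\prod_v \langle\varphi_v,\varphi'_v\rangle = \langle\varphi,\varphi'\rangle$; the normalization in Remark \ref{rmkdconds} then makes the denominator $\langle\varphi_v,\varphi'_v\rangle$ equal to $1$ for every $v\notin S$.

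The main obstacle is the rigorous justification of the interchange of the global integral with the (restricted) infinite product, which is immediate only in the region of absolute convergence $\Real(s) > n/2$; the full identity then follows by meromorphic continuation of both sides in $s$, using the meromorphic continuations of the Eisenstein series on one side and of the local integrals on the other. Note that computing the unramified local factors $Z_v$ explicitly in terms of $L$-factors is a genuinely separate local calculation and is not part of the content of this corollary.
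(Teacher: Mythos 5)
Your proposal is correct and takes essentially the same route as the paper: deduce the Euler product from Theorem \ref{doubletwotoone} by factoring the matrix coefficient $\langle\pi(g)\varphi,\varphi'\rangle$ over places using uniqueness (up to scalar) of local $U(\realfield_v)$-invariant pairings, then split the global integral as a product of local integrals.

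One small inaccuracy worth flagging. You assert that "by convention" the global pairing factors exactly as $\langle\varphi,\varphi'\rangle = \prod_v\langle\varphi_v,\varphi'_v\rangle$, i.e.\ that the proportionality constant relating the global integral pairing to the tensor product of local pairings is normalized to be $1$. The paper does not impose any such normalization: the global pairing is a fixed integral (Equation \eqref{equ:invtpairing}) and the local pairings at $v\in S$ are not specified beyond being invariant, so the constant
\begin{align*}
c = \frac{\langle\varphi,\varphi'\rangle}{\prod_v\langle\varphi_v,\varphi'_v\rangle}
\end{align*}
is carried through explicitly, giving $\langle\pi(g)\varphi,\varphi'\rangle = c\prod_v\langle\pi_v(g_v)\varphi_v,\varphi'_v\rangle$. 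The point is that $c$ cancels automatically once each local integral is divided by $\langle\varphi_v,\varphi'_v\rangle$ in the definition of $Z_v$, which is precisely why that denominator appears. So your conclusion is the same, but the paper's formulation is more robust because it needs no claim about a normalization that is not actually in force. Your remarks about the region of absolute convergence and meromorphic continuation are implicit in the paper's terse "follows immediately," and it is reasonable to make them explicit.
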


\begin{proof}[Proof of Corollary \ref{coroEP}]
By the uniqueness of the $U$-invariant pairing between $\pi$ and $\pi'$, we have
\begin{align*}
\langle \pi(g)\varphi,  \varphi'\rangle=c\prod_v\langle \pi_v(g_v)\varphi_v, \varphi_v'\rangle,
\end{align*}
with $c = \frac{\langle \varphi, \varphi'\rangle}{\prod_v\langle\varphi_v, \varphi_v'\rangle}.$
The corollary now follows immediately from Theorem \ref{doubletwotoone}.
\end{proof}

\begin{proof}[Proof of Theorem \ref{doubletwotoone}]
The theorem will follow from an analysis of the orbits of $U\times U$ acting on $X:=P\backslash U_W$, which we now explain.  
We write $[U\times U]^\gamma$ for the stabilizer of a point $\gamma\in X$.
We re-express the Eisenstein series $E_{f_{s, \chi}}(h)$ as
\begin{align*}
E_{f_{s, \chi}}(h)=\sum_{[\gamma]\in P(\realfield)\backslash U_W(\realfield)/[U\times U](\realfield)}\left(\sum_{[\gamma_0]\in[U\times U]^\gamma(\realfield)\backslash[U\times U](\realfield)}f_{s, \chi}(\gamma\gamma_0 h)\right),
\end{align*}
where, for each $\gamma\in U_W(\realfield)$, $[U\times U]^\gamma(\realfield)$ is the stabilizer of $P(\realfield)\gamma\in P(\realfield)\backslash U_W(\realfield)$ under the right action of $[U\times U](\realfield)$ and $[\gamma]$ is the orbit of $P(\realfield)\gamma$ in $P(\realfield)\backslash U_W(\realfield)$ under the right action of $[U\times U](\realfield)$.

Inserting this expression for the Eisenstein series into the doubling integral, we have
\begin{align*}
Z\left(\varphi, \varphi', f_{s, \chi}\right)
&=\sum_{[\gamma]\in P(\realfield)\backslash U_W(\realfield)/[U\times U](\realfield)}\sum_{[\gamma_0]\in[U\times U]^\gamma(\realfield)\backslash[U\times U](\realfield)}\\
&\left(\int_{[U\times U](\realfield)\backslash[U\times U](\adeles_{\realfield})}f_{s, \chi}\left(\gamma\gamma_0(g, h)\right)\varphi(g)\varphi'(h)\chi^{-1}(\det h)\,dg\,dh\right)\\
&=\sum_{[\gamma]\in P(\realfield)\backslash U_W(\realfield)/[U\times U](\realfield)}I(\gamma),
\end{align*}
where
for each $\gamma\in U_W(\realfield)$, 
\begin{align*}
I(\gamma) := \int_{[U\times U]^\gamma(\realfield)\backslash[U\times U](\adeles_{\realfield})}f_{s, \chi}\left(\gamma(g, h)\right)\varphi(g)\varphi'(h)\chi^{-1}(\det h)\,dg\,dh.
\end{align*}
Note that for each $\gamma\in U_W(\realfield)$,
\begin{align*}
[U\times U]^\gamma(\realfield)&=\left\{(g, h)\in [U\times U](\realfield) \mid P(\realfield)\gamma (g, h) = P(\realfield)\gamma  \right\}\\
&=\left\{(g, h)\in [U\times U](\realfield) \mid \gamma (g, h)\gamma^{-1} \in P(\realfield) \right\}.
\end{align*}

First we consider the case of the identity $\gamma=\gamma_0=1\in U_W(\realfield)$.  The stabilizer of the identity $\gamma_0=1\in U_W(\realfield)$ is
\begin{align*}
[U\times U]^{\gamma_0}(\realfield)=P(\realfield)\cap [U\times U](\realfield) =\left\{(g, g)\mid g\in U(\realfield)\right\}=:U^\Delta(\realfield).
\end{align*}
In this case, we have 
\begin{align*}
f_{s, \chi}\left(\gamma_0(g, h)\right) = f_{s, \chi}\left((g, h)\right) = f_{s, \chi}((h, h)(h^{-1}g, 1)) = \chi(\det(h))f_{s, \chi}(h^{-1}g, 1).
\end{align*}
So
\begin{align*}
I(\gamma_0)& =\int_{U^\Delta(\realfield)\backslash [U\times U](\adeles_{\realfield})}f_{s, \chi}\left(h^{-1}g, 1\right)\varphi(g)\varphi'(h)\,dg\,dh.
\end{align*}
Noting that
\begin{align*}
U\times U\cong & U^\Delta \times (U\times 1)\cong U\times U\\
(g, h)\leftrightarrow &(h, h)(h^{-1}g, 1)\leftrightarrow (h, h^{-1}g)
\end{align*}
and writing $g_1 = h^{-1}g$,
we have
\begin{align*}
I(\gamma_0)& =\int_{U(\adeles_{\realfield})}\int_{U(\realfield)\backslash U(\adeles_{\realfield})}f_{s, \chi}\left(g_1, 1\right)\pi(g_1)\varphi(h)\varphi'(h)\,dh\,dg_1\\
& = \int_{U(\adeles_{\realfield})}f_{s, \chi}\left(g_1, 1\right)\langle \pi(g_1) \varphi, \varphi' \rangle \,dg_1.
\end{align*}

The other orbits are {\em negligible}, i.e.\ the stabilizer of a point in the orbit contains the unipotent radical of a proper parabolic subgroup of $[U\times U](\realfield)$ as a normal subgroup.  (This definition of {\em negligible} comes from \cite[\S1, p.2 ]{PSR}.)  We will now show that for each negligible orbit $[\gamma]$,
\begin{align*}
I(\gamma) = 0,
\end{align*}
thus completing this proof (and also justifying the name {\em negligible}).  For the remainder of this proof, suppose that $\gamma$ belongs to a negligible orbit, i.e.\ there is a proper parabolic subgroup of $U\times U$ whose unipotent radical $N^\gamma$ is a normal subgroup of $[U\times U]^\gamma$.  We write
\begin{align*}
I(\gamma) = \int_{[U\times U]^\gamma(\adeles_{\realfield})\backslash[U\times U](\adeles_{\realfield})}I(\gamma, h_1, h_2)\,dh_1\,dh_2,
\end{align*}
where
\begin{align*}
&I(\gamma, h_1, h_2)\\
&=\int_{[U\times U]^\gamma(\realfield)\backslash[U\times U]^\gamma(\adeles_{\realfield})}f_{s, \chi}\left(\gamma(g_1, g_2)(h_1, h_2)\right)\varphi(g_1h_1)\varphi'(g_2h_2)\chi^{-1}(\det(g_2h_2))\,dg_1\,dg_2.
\end{align*}
We will now prove that $I(\gamma, h_1, h_2)=0$, thus completing the proof.  Let
\begin{align*}
M:=N^\gamma\backslash [U\times U]^\gamma.
\end{align*}
We further decompose that integral as
\begin{align*}
I(\gamma, h_1, h_2) = \int_{M(\realfield)\backslash M(\adeles_{\realfield})}I(\gamma, h_1, h_2, m_1, m_2)\,dm_1\,dm_2,
\end{align*}
where
\begin{align*}
I(\gamma, h_1, h_2, m_1, m_2)&=\int_{N^\gamma(\realfield)\backslash N^\gamma(\adeles_{\realfield})}F_{s, \chi, \gamma, h_1, h_2, m_1, m_2}(n_1, n_2)\,dn_1\,dn_2,\\
F_{s, \chi, \gamma, h_1, h_2, m_1, m_2}(n_1, n_2)&:=\\
f_{s, \chi}(\gamma(n_1, n_2)&(m_1, m_2)(h_1, h_2))\varphi(n_1m_1h_1)\varphi'(n_2m_2h_2)\chi^{-1}(\det(n_2m_2h_2)).
\end{align*}
Since $N^\gamma$ is the unipotent radical of a proper parabolic subgroup of $U\times U$, we can write
\begin{align*}
N^\gamma=N_1\times N_2,
\end{align*}
with each subgroup $N_i$ the unipotent radical of a parabolic subgroup of $U$.  So noting that $\det n_i=1$ and writing $\gamma (n_1, n_2)= p\gamma$ for some $p\in P$, we have
\begin{align*}
&I(\gamma, h_1, h_2)
\\&=\int_{M(\realfield)\backslash M(\adeles_{\realfield})}f_{s, \chi}(p\gamma(m_1, m_2)(h_1, h_2))I_1(m_1, h_1)I_1(m_2, h_2)\chi^{-1}(\det(m_2 h_2))\,dm_1\,dm_2,
\end{align*}
where, for $i=1,2$,
\begin{align*}
I_i(m_i, h_i) = \int_{N_i(\realfield)\backslash N_i(\adeles_{\realfield})}\varphi_i(n_i m_i h_i) \,dn_i,
\end{align*}
with $\varphi_1=\varphi$ and $\varphi_2=\varphi'$.
Since $N$ is nontrivial, at least one of the subgroups $N_i$ is nontrivial.  So since $\varphi$ and $\varphi'$ are cusp forms, $I_i(m_i, h_i)=0$ for at least one of $i=1,2$.  So $I(\gamma)=0$ for each $\gamma$ in a negligible orbit.

Finally, to conclude the proof of Theorem \ref{doubletwotoone}, we need to show that if $\gamma\nin[1]$, then $\gamma\in X=P\backslash U_W$ is in a negligible orbit.  
First, note that since $U_W$ acts transitively on the space of maximal isotropic subspaces of $W$ and $P$ stabilizes the maximal isotropic subspace $V^\Delta$, we identify $X = P\backslash U_W$ with the variety that parametrizes the maximal isotropic subspaces of $W$.  Under this identification, the $U\times U$-orbit of $P1\in P\backslash U_W$ corresponds to the $U\times U$-orbit of the maximal isotropic subspace $V^\Delta$.  We need to show that the other orbits are negligible, i.e.\ the stabilizer contains the unipotent radical of a proper parabolic subgroup of $U\times U$ as a normal subgroup.  Let $V^+=V\times\langle0\rangle\subseteq W$, and $V^-=\langle0\rangle\times V\subseteq W$.  By \cite[Lemma 2.1]{PSR}, for any maximal isotropic subspace $L\subseteq W$, we have $\dim_K L\cap V^+=\dim_K L\cap V^-$, and furthermore, the $U\times U$-orbits of maximal isotropic subspaces parametrized by $X$ are the spaces $X_d\subseteq X$ defined by
\begin{align*}
X_d := \left\{L\subseteq W \mid L \mbox{ is a maximal isotropic subspace such that }\dim_K(L\cap V^+) = d\right\}
\end{align*}
Observe that the orbit of $V^\Delta$ is then $X_0$.  

For the remainder of the proof, fix $d>0$, and let $L\in X_d.$  Let $R\subset U\times U$ be the stabilizer of $L$.  To complete the proof, we will find a proper parabolic subgroup of $U\times U$ whose unipotent radical is normal in $R$.  Let $P^{\pm}$ denote the parabolic subgroup of $U$ preserving the flag $V\supset \pi^{\pm}(L) \supset L^{\pm}$, where $\pi^{\pm}$ is the orthogonal projection of $W$ onto $V^\pm$ and $L^\pm:=L\cap V^\pm$.  Note that since $d>0$, $P^{\pm}$ is a proper parabolic subgroup of $U$.  So $P^+\times P^-$ is a proper parabolic subgroup of $U\times U$.  Let $N_d=N^+\times N^-$, with $N^\pm$ the unipotent radical of $P^\pm$.  To conclude the proof, it suffices to show that $N_d$ is a normal subgroup of $R$.  Note that $R\subset P^+\times P^-$.  So since $N_d$ is normal in $P^+\times P^-$, it suffices to show that $N \subset R.$  Note that $N^\pm$ induces the identity on $\pi^{\pm}(L)/L^\pm$.  Now consider the projections $\pi_{\pm}: \pi^\pm(L)\rightarrow \pi^\pm(L)/L^\pm$.  Then $\pi_{\pm}(n v) = \pi_{\pm} (v)$ for all $v\in L^{\pm}$ and $n\in N_d$.  Finally, note that for the maximal isotropic subspace, there is an isometry $\iota: \pi^{\pm}(L)/L^\pm\rightarrow \pi^{\mp}(L)/L^{\mp}$ determined by $\iota(\pi_+ v_+) = \pi_-v_-$ if and only if $(v_+, v_-)\in L$.  So for all $n=(n_+, n_-)\in N_d$, $\iota(\pi_+ \left(n_+v_+\right)) =\iota (\pi_+v_+) = \pi_-(v_-) = \pi_-(n_-v_-).$  So $N_d\subset R$, as desired.
\end{proof}

\begin{rmk}
The doubling method actually applies not only to unitary groups but to classical groups more generally.  Indeed, our approach above often did not use features unique to unitary groups but relied primarily on an embedding $U\times U\hookrightarrow U_W$ of a group into its ``doubled group'' and the following two properties, which can also be extended to other classical groups:
\begin{enumerate}
\item{The stabilizer of the identity $\gamma_0=1$ is $U^\Delta$.}
\item{All the orbits other than the orbit of $\gamma_0=1$ are negligible.}
\end{enumerate}
For example, explicit constructions for symplectic and orthogonal groups are also discussed in \cite[Section 2]{PSR}.  In practice, \cite{PSR} treats the input and construction axiomatically and then shows how this axiomatic treatment can be applied for each of the classical groups.\end{rmk}

\subsubsection{From local doubling integrals to Euler factors for standard Langlands $L$-functions}\label{sec:doubletostd}
We already saw that $Z(f_{s, \chi}, \varphi, \varphi')$ satisfies a functional equation, but we want to relate it to a familiar $L$-function.   In Sections \ref{sec:nsarch} through \ref{sec:archfac}, we give a brief overview of the realization of the Euler factors $L(s, \pi, \chi)$ for the standard Langlands $L$-function in terms of the local integrals $Z_v(\varphi_v, \varphi'_v, f_{s, \chi_v})$.  For the remainder of the manuscript, we set 
\begin{align*}
f_v:=f_{s, \chi_v}.
\end{align*}
  
As noted in \cite[Section 4.1]{MRLnonarch}, at all finite places $v\nin S$, the strategy for computing the local integrals arising in the doubling method for unitary groups (as well as for symplectic groups) is to reduce them to integrals computed by Godement and Jacquet for $\GL_n$ \cite{jacquet}.  For certain other groups, the computations reduce to other integrals computed by Godement and Jacquet in \cite{godement-jacquet}.

\subsubsection{Factors at split nonarchimedean places}\label{sec:nsarch}
At split finite places $v$, $U(\realfield_v)\cong \GL_n(\realfield_v)$, as in Isomorphism \eqref{UGLniso}.  In \cite[Section 3]{PSR} (see also \cite[Section 2.3]{cogdell} and \cite[Section 4.2.1]{HELS}), Ilya Piatetski-Shapiro and Stephen Rallis explain how to choose a local Siegel section $f_v$ so that the computation reduces to integrals computed by Godement and Jacquet for $\GL_n$ in \cite{godement-jacquet}.
At these places, we have
\begin{align}\label{equ:halfshift}
d_{n,v}\left(s, \chi_v\right)Z_v\left(\varphi_v, \varphi_v', f_v, s\right) = L_v\left(s+\frac{1}{2}, \pi_v, \chi_v\right),
\end{align}
where
\begin{align*}
d_{n, v}\left(s, \chi_v\right) = d_{n, v}\left(s\right)=\prod_{r=0}^{n-1}L_v\left(2s+n-r, \chi_v\mid_{\realfield_v}\eta_v^r\right),
\end{align*}
$\eta_v$ is the character on $\realfield_v$ attached by local class field theory to the extension $\cmfield_w/\realfield_v$ (with $w$ a prime of $\cmfield$ lying over $v$).  (N.B.\ In spite of how it might look, the shift by $1/2$ in Equation \eqref{equ:halfshift} above and in Equation \eqref{equ:JShalfshift} below are not typos, but rather are consistent with the corresponding statements in \cite[Theorem 3.1 and end of Section 3]{JSLi}.)  Here $L_v\left(s+\frac{1}{2}, \pi_v, \chi_v\right):=L(s, BC_{\cmfield/\realfield}(\pi_v)\otimes (\chi_v\circ\det))$, where $BC_{\cmfield/\realfield}(\pi_v)$ denotes the base change of $\pi_v$ from $U/\realfield$ to $\GL_n/\cmfield$ (as developed in \cite{labesse}).

\subsubsection{Factors at inert nonarchimedean places}
At inert nonarchimedean primes, the unitary group is not isomorphic to a general linear group.  So we will need a different approach from the one we just employed for split finite primes.  Note that there is a close relationship between the symplectic group $\Sp_{2n}$ and the unitary group $U(n,n)$ of signature $(n,n)$.  In \cite[Section 6]{PSR}, Piatetski-Shapiro and Rallis computed the local integrals for $\Sp_{2n}$.  (They did this by choosing a local section $f_v$ that allows them to reduce their calculation to a calculation of the aforementioned integrals for $\GL_n$ computed Godement and Jacquet.)  In \cite[Section 3]{JSLi}, Jian-Shu Li completed the computation of the local integrals for unitary groups by reduction to these calculations for $\Sp_{2n}$ completed by Piatetski-Shapiro--Rallis, and similarly to the result at split finite places above, we obtain
\begin{align}\label{equ:JShalfshift}
d_{n,v}\left(s, \chi_v\right)I_v\left(\varphi_v, \varphi_v', f_v, s\right) = L_v\left(s+\frac{1}{2}, \pi_v, \chi_v\right).
\end{align}

\begin{rmk}
For those interested in constructing $p$-adic $L$-functions, note that one must modify the input at $p$ and compute the corresponding integrals at $p$.  The approach in \cite{HELS} assumes the prime $p$ splits, so the local integrals in that paper reduce immediately to (elaborate!) calculations for general linear groups.  In the case of $p$ inert, at least for unitary groups of signature $(a,a)$, in the spirit of Li's calculation mentioned above, a starting point would be to adapt the calculations carried out by Zheng Liu for symplectic groups in \cite{liuJussieu}, as discussed in \cite[Section 4.1]{MRLnonarch}.
\end{rmk}

\subsubsection{Factors at ramified nonarchimedean places}
At ramified places, it is possible to construct a section $f_v\in  \Ind_{P(\realfield_v)}^{U(\realfield_v)}\left(\chi_v|\cdot|^{-s}\right)$ such that
$\int_{U(\realfield_v)}f_v(g, 1)\langle \pi_v(g)\varphi_v, \varphi'_v\rangle dg$ is constant (see, e.g.\ \cite[p. 47]{PSR} or \cite[Section 4.2.2]{HELS}).

\subsubsection{Factors at archimedean places}\label{sec:archfac}
The author and Liu computed the archimedean zeta integrals in terms of a product of $\Gamma$-factors, without restriction on the weights or signature \cite{EL}.  For over 30 years, though, the question of how to compute them remained open, except in special cases.  The first progress in this direction was due to Paul Garrett, who showed that the archimedean zeta integrals are algebraic up to a particular power of the number $\pi$ \cite{ga06}.  In addition, for certain choices of Siegel sections $f_v$, he computed the archimedean integrals when a certain piece of the weight of the cuspidal automorphic representation $\pi$ is one-dimensional.  (Prior to Garrett's contribution, Shimura had addressed the case of scalar weight \cite{sh, shar}.)  The work in \cite{EL} builds on Liu's work for the symplectic case in \cite{liu-archimedean}, as well as work for unitary groups of signature $(n,1)$ in \cite{liu21, liun1}.

\subsection{The doubling method, revisited from the perspective of algebraic geometry}\label{sec:doublingPELvarieties}
Section \ref{sec:doubling} introduced the doubling method, as it was originally presented in \cite{PSR, ga}.  This formulation is not only important for obtaining an Euler product, but also for establishing key analytic properties, such as the functional equation and meromorphic continuation of the $L$-function.  On the other hand, if we want to study algebraic aspects of the values of the $L$-function, it is not necessarily immediately apparent how to translate this analytic formulation (in terms of integrals) into algebraic information.  

The key is to reformulate the doubling pairing in terms of PEL data and the corresponding moduli spaces.  This is the approach employed by Harris in his study of critical values of $L$-functions associated to automorphic representations of unitary groups in \cite{harriscrelle}, and it is also the approach used in the constructions of $p$-adic $L$-functions for unitary groups in \cite{HELS}.  This approach is also the main topic of \cite{harrisdsv}.

Now, the groups arising in the full PEL moduli problem are {\em general} unitary groups, i.e.\ have similitude factors.  As seen in \cite[Sections 3.1 and 3.2]{harriscrelle}, though, it is straightforward to adapt the doubling method to the case of similitude groups.  The inclusion $U\times U\hookrightarrow U_W$ from \eqref{doublingembedding} extends to an inclusion
\begin{align}\label{GUincldouble}\index{$GU_W$}
G(U\times U)\hookrightarrow GU_W:=GU(W, \langle, \rangle_W),
\end{align}
where
\begin{align*}
G(U\times U):=\left\{(g, h)\in GU(V, \langle, \rangle)\times GU(V, -\langle, \rangle)\mid \nu(g) = \nu(h)\right\}.
\end{align*}
Then the doubling integral defined in Equation \eqref{doublintegral} is replaced by a similar integral (with the input automorphic forms now defined on similitude groups) but over 
\begin{align*}
\left(Z\left(\adeles_{\realfield}\right)\left(G\left(U\times U\right)\right)(\realfield)\right)\backslash \left(G\left(U\times U\right)\right)\left(\adeles_{\realfield}\right),
\end{align*}
where $Z$ denotes the identity component of the center and is identified with the center of $GU(V, \langle, \rangle)=GU(V, -\langle, \rangle).$
Likewise, the invariant pairing from Equation \eqref{equ:invtpairing} is replaced by an integral over 
\begin{align*}
Z(\adeles)GU_V(\realfield)\backslash GU_V(\adeles_{\realfield}), 
\end{align*}
where $GU_V :=GU(V, \langle, \rangle)$.

We also note that the definition of the Eisenstein series from Equation \eqref{equ:siegeleseries} can be extended to $GU_W$.  Similarly to the unitary setting above, we denote by $GP$ the parabolic subgroup of $GU_W$ preserving $V^\Delta$ in $V_\Delta\oplus V^\Delta$.  In place of Equation \eqref{equ:PorR}, we have
\begin{align}\label{equ:GPorR}
GP(R)=\left\{\begin{pmatrix} \lambda A & 0\\ 0 &  { }^t\bar{A}^{-1}\end{pmatrix}\begin{pmatrix}1_n & X\\ 0& 1_n\end{pmatrix}\mid \lambda\in R^\times, A\in  GL_{\cmfield\otimes_{\realfield} R}(V\otimes_\realfield R), X\in\hern(\cmfield\otimes_{\realfield}R)\right\}.
\end{align}
Given a $\realfield$-algebra $R$ and a character $\psi$ of $(\cmfield\otimes_{\realfield}R)^\times$, we obtain a character of $GP(R)$ via
\begin{align*}
p=\begin{pmatrix} \lambda A & B\\ 0 & { }^t\bar{A}^{-1}\end{pmatrix}\mapsto\psi(\det A(p))|\lambda(p)|_{\realfield}^{-ns},
\end{align*}
where $A(p)=A$ and $\lambda(p)=\lambda$.
Correspondingly, as detailed in, e.g.\, \cite[Section 3.1]{apptoSHLvv}, we replace $I(s, \chi)$ from above by the induction from $GP(\adeles_{\realfield})$ to $GU_W(\adeles_{\realfield})$ of
\begin{align*}
p\mapsto \chi(\det A(p))|\lambda(p)|_{\realfield}^{-ns}|\det A(p)|^{-s+n/2},
\end{align*}
and we sum over $GP(\realfield)\backslash GU_W(\realfield)$ instead of $ P(\realfield)\backslash U_W(\realfield)$ in the definition of the Eisenstein series.

\begin{rmk}
From the discussion in Section \ref{unitarycon}, it is straightforward to write the doubling integral and subsequent discussion in terms of algebraic groups defined over $\IQ$.
\end{rmk}

\subsubsection{PEL data for the doubling method}
Ultimately, we are interested in studying algebraicity of values of the above $L$-functions. For this, it will be useful to relate the above approach to the PEL data and moduli spaces introduced in Section \ref{PELdatasection}.  Similarly to \cite[Section 3.1]{HELS}, we choose PEL data of unitary type that will induce an embedding of moduli spaces corresponding to the inclusion \eqref{GUincldouble}.   
In particular, we fix the following PEL data of unitary type, following the conventions of Section \ref{unitarycon}:
\begin{align*}
\mathfrak{D}_+&:=\left(\cmfield, \ast, \OK, V, \langle, \rangle_\IQ, L, h \right) \mbox{is a PEL datum of unitary type associated with $(V, \langle, \rangle)$}\\
\mathfrak{D}_-&:=\left(\cmfield, \ast, \OK, V, -\langle, \rangle_\IQ, L, z\mapsto h(\bar{z}) \right) \\
\mathfrak{D}_{+,-}&:=\left(\cmfield\times\cmfield, \ast\times\ast, \OK\times\OK, V\oplus V, \langle, \rangle_\IQ\oplus-\langle, \rangle_\IQ, L\oplus L, z\mapsto h(z)\oplus h(\bar{z}) \right), \\
&\mbox{ where $\langle, \rangle_\IQ\oplus-\langle, \rangle_\IQ$ is defined as in \eqref{opluspairing}}\\
\mathfrak{D}&:=\left(\cmfield, \ast, \OK, V\oplus V,  \langle, \rangle_\IQ\oplus-\langle, \rangle_\IQ, L\oplus L, z\mapsto h(z)\oplus h(\bar{z})\right)
\end{align*}
We write $G_+$, $G_-$, $G_{+,-}$, and $G$ for each of the algebraic groups corresponding, respectively, to this PEL data, as in Equation \eqref{GofR}.  The subscripts have been chosen to emphasize the connection with the unitary similitude groups immediately above, in particular:
\begin{itemize}
\item{$G_+$ is defined in terms of a pairing $\langle, \rangle$}
\item{$G_-$ is defined in terms of $-\langle, \rangle$ and is canonically isomorphic to $G_+$}
\item{$G_{+, -}$ is canonically embedded in $G_+\times G_-$}
\item{$G$ is defined in terms of $\langle, \rangle\oplus -\langle, \rangle$, and $G_{+, -}$ is canonically embedded inside $G$}
\end{itemize}
These four groups play analogous roles to the groups $GU(V, \langle, \rangle)$, $GU(V, -\langle, \rangle)$, $G(U\times U)$, and $GU(W, \langle, \rangle_W)$, respectively, from above.  We choose compatible compact open subgroups $\cpct_\pm$, $\cpct_{+, -}$, $\cpct$ inside the groups $G_{\pm}$, $G_{+,-}$, $G$.  We continue to use the subscripts here to refer to objects corresponding to these PEL data.  Like in \cite[Equation (38)]{HELS}, for each $\CO_{\reflex, (p)}$-scheme $S$ (or each $\reflex$-scheme $S$, depending on which moduli problem we are working with), this induces natural $S$-morphisms
\begin{align}
\moduliint_{+, -, \cpct_{+, -}}&\rightarrow\moduliint_\cpct\label{moduliincl1}\\
\moduliint_{+, -, \cpct_{+, -}}&\rightarrow \moduliint_{+,\cpct_+}\times_S\moduliint_{-,\cpct_-}.\label{moduliincl2}
\end{align}
Via the map \eqref{moduliincl1}, we can also pullback (i.e.\ restrict) automorphic forms, viewed as global sections of the vector bundle $\CE^\rho$, on $\moduliint_\cpct$ to obtain automorphic forms on $\moduliint_{+, -, \cpct_{+, -}}$.  Via the map \eqref{moduliincl2}, we can evaluate those automorphic forms on products of pairs of abelian varieties with PEL structure.  As an exercise to check their understanding, the reader might write down the corresponding maps of abelian varieties with PEL structure induced by these two maps of moduli spaces.

This setup, together with the algebraicity of the Eisenstein series discussed in Section \ref{sec:algEseries}, enables us to view the doubling integral as a pairing between algebraic automorphic forms over the moduli space $\moduliint_{+, -, \cpct_{+, -}}$.

\subsection{Algebraicity of Eisenstein series}\label{sec:algEseries}

The strategy outlined above for studying rationality (or algebraicity) of values of automorphic $L$-functions relies on the rationality (or algebraicity) of the Eisenstein series that are input to the doubling method.  Thanks to the algebraicity properties of the Maass--Shimura operators mentioned earlier that can be used to study Eisenstein series at points $s$ where they might not be holomorphic, we focus our discussion here on holomorphic Eisenstein series.  In the case of modular forms, the algebraic $q$-expansion principle tells you that modular forms $f$ are determined by their $q$-expansions (i.e.\ value of $f$ at the Tate curve with the canonical differential, as in \cite[Sections 1.1 and 1.2]{katzmodular} or \cite{conradqexp}).  More precisely, we have the following proposition.
\begin{prop}[Corollary 1.6.2 of \cite{katzmodular}]\label{prop:qexpprinciple}
Let $f$ be a holomorphic modular form of level $n$, defined over a $\ZZ[1/n]$-algebra $S$.  Suppose the $q$-expansion coefficients of $f$ lie in a $\ZZ[1/n]$-subalgebra $R\subseteq S$.  Then $f$ is defined over $R$.
\end{prop}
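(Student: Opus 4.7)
The plan is to prove Proposition \ref{prop:qexpprinciple} by an algebraic-geometric argument: realize the modular form $f$ as a global section of an invertible sheaf on the compactified moduli space, and then deduce descent from $S$ to $R$ by combining compatibility of global sections with base change with the injectivity of the $q$-expansion map at the cusps.

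First, I would work on the compactified moduli scheme $\bar{\moduliint}_n$ over $\Spec\ZZ[1/n]$, which for $n \geq 3$ is smooth and proper with the cusps forming a regular divisor (the cases $n = 1, 2$ reduce to the higher-level case by descent along a natural cover). In the $\GL_2$ specialization of Definition \ref{algdef3}, a holomorphic modular form of weight $k$ defined over a $\ZZ[1/n]$-algebra $A$ is a global section $M_A := H^0(\bar{\moduliint}_{n,A},\uo^{\otimes k})$ over $\bar{\moduliint}_{n,A} := \bar{\moduliint}_n \times_{\Spec\ZZ[1/n]} \Spec A$. Because $\bar{\moduliint}_n$ is proper and flat over $\ZZ[1/n]$, cohomology and base change (with a little care: reduce to the range of weights $k$ where $H^1$ vanishes, e.g.\ by multiplying by a fixed high-weight form of level $n$) give $M_A = M_{\ZZ[1/n]} \otimes_{\ZZ[1/n]} A$ with $M_{\ZZ[1/n]}$ a finitely generated projective $\ZZ[1/n]$-module. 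In particular $M_S = M_R \otimes_R S$, and $M_R \hookrightarrow M_S$ is the inclusion induced by $R \hookrightarrow S$.

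Second, I would invoke the geometric core of the argument: injectivity of the $q$-expansion. At each cusp $c$, the Tate curve with its canonical differential provides a morphism $\Spec A[[q]] \to \bar{\moduliint}_{n,A}$, and pullback of a section of $\uo^{\otimes k}$ along this morphism is by definition the $q$-expansion of the form at $c$, an element of $A[[q]]$. Collecting these over one cusp per geometric component yields a homomorphism
\[
\phi_A\colon M_A \longrightarrow \prod_c A[[q]],
\]
and the crux is that $\phi_A$ is injective. Over a field this follows from the geometric connectedness of each component of $\bar{\moduliint}_n$ (Igusa) together with the fact that a section of a line bundle on an irreducible scheme vanishing in a Zariski neighborhood of a point vanishes on the whole component; over an arbitrary base $A$ one reduces to the field case, using that the formal completion of $\bar{\moduliint}_{n,A}$ along a cusp is $\Spec A[[q]]$.

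Third, a diagram chase finishes the proof. The naturality of $q$-expansion in $A$ gives a commutative square
\[
\begin{array}{ccc}
M_R & \xrightarrow{\phi_R} & \prod_c R[[q]] \\
\downarrow & & \downarrow \\
M_S & \xrightarrow{\phi_S} & \prod_c S[[q]]
\end{array}
\]
with injective horizontal arrows and vertical arrows induced by $R \hookrightarrow S$. Given $f \in M_S$ with $\phi_S(f) \in \prod_c R[[q]]$, write $f = \sum_i s_i (e_i \otimes 1)$ relative to an $R$-basis $\{e_i\}$ of the projective module $M_R$; then $\phi_S(f) = \sum_i s_i \phi_R(e_i)$, and matching $q$-expansion coefficients at each cusp (using that injectivity of $\phi_R$ makes the $\phi_R(e_i)$ sufficiently $R$-linearly independent in $\prod_c R[[q]]$ to extract each $s_i$ from the coefficients) forces every $s_i$ to lie in $R$. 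Hence $f$ lies in the image of $M_R \to M_S$, i.e., is defined over $R$. The main obstacle is the injectivity in the second step: it requires the full theory of the compactified moduli of elliptic curves over $\ZZ[1/n]$ and a careful analysis of the cusps as a regular divisor (Deligne--Rapoport); once this is in hand, both the base-change statement of step one and the coefficient-extraction of step three are essentially formal. The analogous Fourier--Jacobi principle for higher-rank groups is considerably more delicate, which is the content of Lan's work referenced in Remark \ref{rmk:qexp1}.
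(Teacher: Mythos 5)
The paper cites Katz's Corollary 1.6.2 and gives no proof, so there is no in-text argument to compare against; I assess your proposal on its own merits.

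The gap is in step three. Injectivity of $\phi_R$ tells you only that the $\phi_R(e_i)$ are $R$-linearly independent, i.e.\ that $\sum_i r_i \phi_R(e_i) = 0$ with $r_i \in R$ forces $r_i = 0$. It does \emph{not} let you conclude that $\sum_i s_i \phi_R(e_i) \in \prod_c R[[q]]$ with $s_i \in S$ forces $s_i \in R$: that is a purity statement, equivalent to the cokernel of $\phi_R$ being $R$-flat (or, for $M_R$ finitely generated projective, to the image of $\phi_R$ being an $R$-module direct summand of $\prod_c R[[q]]$). If that cokernel had, say, $m$-torsion, one could find $a \in \prod_c R[[q]]$ and $g \in M_R$ with $ma = \phi_R(g)$ but $a$ not in the image of $\phi_R$; then $f := m^{-1} g$ would lie in $M_{R[1/m]}$ and have $q$-expansion $a$ with coefficients in $R$, yet not be defined over $R$ --- exactly the conclusion the proposition forbids. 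So your step three silently assumes the very flatness/purity that constitutes the real content of Katz's theorem; your phrase ``sufficiently $R$-linearly independent \ldots\ to extract each $s_i$'' is not a consequence of step two but a restatement of what must be proved. Establishing the flatness requires the geometry of $\bar{\moduliint}_n$ over $\ZZ[1/n]$ (properness, smoothness, the cusps as a relative Cartier divisor) together with a short-exact-sequence and base-change argument on sheaf cohomology, not merely the injectivity of the $q$-expansion map. A secondary problem in step one: multiplying by a fixed high-weight form cannot be used to manufacture cohomological base change for $H^0(\uo^{\otimes k})$, since every positive-weight form vanishes somewhere on $\bar{\moduliint}_n$ (the sheaf $\uo$ has positive degree on each component), so that multiplication is not an isomorphism of invertible sheaves and cannot be inverted.
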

Because the algebraic $q$-expansions of a modular form $f$ agree with the analytic $q$-expansions (i.e.\ Fourier expansions) of $f$ (as noted in \cite[Equation (1.7.6)]{kaCM}), we can use the Fourier coefficients of a modular form to determine that it is defined over, say, $\IQ$ or some localization of a ring of integers.  In other words, to determine that $f$ is defined over some $\ZZ[1/n]$-algebra $R$, we can complete the following two steps:
\begin{enumerate}
\item{Determine the Fourier expansion of $f$.}
\item{Observe that the coefficients of $f$ are contained in $R$ (and apply Proposition \ref{prop:qexpprinciple}).}\label{item:step2q}
\end{enumerate}

Fortunately, as noted in Remark \ref{rmk:qexp1}, Lan has proved an analogous algebraic Fourier--Jacobi expansion principle for automorphic forms on unitary groups \cite[Proposition 7.1.2.14]{la}, and he has also shown that their analytically defined Fourier--Jacobi expansions also agree with algebraically defined Fourier--Jacobi expansions \cite{lancomparison}.  (For unitary groups of signature $(n,n)$ at each archimedean place, like in Section \ref{sec:qexp}, we can write this in a variable $q$ that makes it reasonable to call this a {\em $q$-expansion principle} again.)  This takes care of Step \eqref{item:step2q} in our setting.  Still, we are left with a potentially challenging problem: choosing an Eisenstein series and determining its Fourier cofficients.  The choice of Eisenstein series is influenced by our use of the doubling method introduced in Section \ref{sec:doubling}.  

To assist with computing the Fourier coefficients, we have a convenient result of Shimura.  Like above, we will work with Siegel Eisenstein series $E_f:=E_{f_{s, \chi}}$ associated to $f:={f_{s, \chi}}\in I(s, \chi)$ factoring as $f = \otimes_v f_v$, with $f_v\in I_v(s, \chi)$.  Recall that $E_f$ is an automorphic form on a  group that is of signature $(n,n)$ at each real place, which can be identified with the matrices preserving $\eta=\eta_n$ at each real place (as in Remark \ref{rmk:preservemx}) and that is assumed to be quasi-split.  In this case, $E_f$ has an adelic Fourier expansion, similar to the one in Equation \eqref{equ:Fourierexpnz}, whose Fourier coefficients each factor as a product of local Fourier coefficients. 
More precisely, we have the following result for $E_f^\ast(x):=E_f(x\eta^{-1}_f)$, where $\eta_f:=\prod_{v\ndivides \infty}\eta$.
\begin{thm}[Section 18.10 of \cite{sh}]\label{prop:sh1810}
\begin{align*}
E_f^\ast\left(q\right) = \sum_{\beta\in\hern(\cmfield)}c_\beta q^\beta,
\end{align*}
with $c_\beta$ complex numbers that factor as a precisely determined constant multiple of a product (over all places $v$ of $\realfield$) of local Fourier coefficients $c_{\beta, v}$ defined to be the Fourier transform of $f_v$ at $\beta$.  
\end{thm}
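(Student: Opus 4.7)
My approach would be to compute $c_\beta$ directly from the definition of a Fourier coefficient by integration against an additive character, then substitute the series definition of $E_f$ and unfold using the Bruhat decomposition of $U_W$ with respect to the Siegel parabolic $P$. Concretely, setting $n(X) = \begin{pmatrix} 1_n & X \\ 0 & 1_n \end{pmatrix}$ and $\psi_\beta(X) = \mathbf{e}\bigl(\tr_{\cmfield/\IQ}\tr(\beta X)\bigr)$, the coefficient can be extracted as
\begin{align*}
c_\beta \;=\; \int_{\hern(\cmfield)\backslash \hern(\adeles_\cmfield)} E_f^*\bigl(n(X)\bigr)\,\psi_\beta(X)^{-1}\,dX,
\end{align*}
and inserting $E_f^*(x) = \sum_{\gamma \in P(\realfield)\backslash U_W(\realfield)} f(\gamma x \eta_f^{-1})$ reduces the problem to a sum of orbital integrals, one per $P(\realfield)$-$N_P(\realfield)$ double coset in $U_W(\realfield)$.

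The central step is to carry out the unfolding. The double cosets are parametrized by isotropic subspaces of $W$ of varying dimension, with the open cell represented by the Weyl element $w$ interchanging $V^\Delta$ and $V_\Delta$. For $\beta$ of full rank, the contributions of smaller cells vanish after integration against $\psi_\beta$, because the stabilizer of such a representative in $N_P$ contains a positive-dimensional unipotent subgroup on which $\psi_\beta$ is nontrivial. The open cell has trivial stabilizer, so summing over $N_P(\realfield)$ collapses the integration over $N_P(\realfield)\backslash N_P(\adeles_{\realfield})$ into a single integration over all of $N_P(\adeles_{\realfield}) \cong \hern(\adeles_\cmfield)$, yielding
\begin{align*}
c_\beta \;=\; \kappa \int_{\hern(\adeles_\cmfield)} f\bigl(w n(X) \eta_f^{-1}\bigr)\,\psi_\beta(X)^{-1}\,dX
\end{align*}
for an explicit constant $\kappa$ depending only on measure normalizations and global invariants. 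The $\eta_f^{-1}$ shift built into the definition of $E_f^*$ is exactly what packages this identity uniformly across all $\beta$.

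The factorization is then immediate from the tensor-product structure. Since $f = \otimes_v f_v$, the character factors as $\psi_\beta = \prod_v \psi_{\beta,v}$, and the Tamagawa measure on $\hern(\adeles_\cmfield)$ factors as $dX = \prod_v dX_v$, so the global integral splits into an Eulerian product
\begin{align*}
c_\beta \;=\; \kappa \prod_v c_{\beta,v}, \qquad c_{\beta,v} \;=\; \int_{\hern(\cmfield_v)} f_v\bigl(w n(X)\eta_v^{-1}\bigr)\,\psi_{\beta,v}(X)^{-1}\,dX,
\end{align*}
identifying $c_{\beta,v}$ as the Fourier transform of $f_v$ at $\beta$ (up to the Weyl-element translate), which is the desired assertion.

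The principal obstacle will be justifying the unfolding rigorously: one must verify absolute convergence of the iterated sum-integral in the range $\Real(s) > n/2$ (using the convergence region of the Siegel Eisenstein series from Section~\ref{sec:doubling}) and only afterwards extract the coefficient identity by meromorphic continuation; verify through explicit stabilizer computations that the non-open Bruhat cells give vanishing or correctly-combined contributions; and pin down the constant $\kappa$, which involves bookkeeping of Tamagawa volumes of $\hern(\cmfield)\backslash \hern(\adeles_\cmfield)$ together with measure normalizations at archimedean places. Handling degenerate $\beta$ (those with $\det \beta = 0$) uniformly alongside non-degenerate $\beta$ is the subtlest point, and this is precisely what the choice $E_f^*(x) = E_f(x\eta_f^{-1})$ is engineered to enable.
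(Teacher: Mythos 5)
The paper does not prove this statement: it is attributed outright to Section 18.10 of \cite{sh}, and the remarks that follow only record the shape of the adelic Fourier expansion and the local Fourier transform $c_{\beta, v}$, explicitly declining to reproduce the derivation and deferring to \cite{sh} and \cite{shconfluent}. So there is no ``paper's proof'' to match your argument against; the relevant question is whether your sketch is a viable proof of the cited result. Your unfolding over the orbit space $P(\realfield)\backslash U_W(\realfield)/N_P(\realfield)$ is the right framework, and your treatment of nondegenerate $\beta$ is essentially correct: for $\beta$ of full rank, every cell other than the open one $P w N_P$ has an $N_P$-stabilizer on which $\psi_\beta$ restricts nontrivially, so those terms vanish, the open-cell stabilizer is trivial, and the integral collapses to a full adelic Fourier transform which factors because $f=\otimes_v f_v$.

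The gap is the degenerate $\beta$. You correctly flag it as the subtle point, but the resolution you offer is wrong. For $\beta$ of rank $\rho<n$, the cells $Pw_rP$ with $r\geq\rho$ all have representatives whose $N_P$-stabilizer annihilates $\psi_\beta$, so your unfolding produces a \emph{sum} of products of local integrals (already visible for $n=1$, $\beta=0$: the constant term of a Siegel Eisenstein series is $f+M(s,\chi)f$, and a sum of two Euler products is not an Euler product). This sum does not factor termwise, so the factorization claimed in the theorem does not follow from the unfolding alone. The shift by $\eta_f^{-1}$ is not ``engineered'' to kill the lower cells -- it is a normalization that inserts the Weyl element $w$ into the local integrand uniformly at every place, matching the displayed formula for $c_{\beta,v}$; the lower-cell contributions are unaffected by it. What Shimura actually does (in \cite[Section 18]{sh}, and in \cite{shconfluent}) is establish the single-cell formula first on a range of arguments $h=\diag({}^t\bar h^{-1},h)$ -- essentially where $|\det h_v|$ at a suitable nonarchimedean place forces the lower-cell local factors to vanish -- and then extend to all $\beta$ and $h$ by polynomiality/meromorphic continuation of both sides in $s$ and $h$. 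Without that restriction-and-extension step, your argument only proves the theorem for nondegenerate $\beta$.
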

Choosing a section $f$ that factors over places of $\realfield$ not only played a role in producing the Euler product for our $L$-function, but also plays a crucial role in breaking our global Fourier coefficients into a product of local Fourier coefficients.
\begin{rmk}\label{rmk:Fexpnsrmk}
We stated Theorem \ref{prop:sh1810} in such a way as to highlight the fact that each Fourier coefficient decomposes as a product local factors, at the cost of omitting the setup necessary to give more precise statements about adelic Fourier expansions.  Since we do not need those details anywhere else in this manuscript, we simply refer the reader to \cite[Section 2.2.4]{apptoSHL} or \cite[Section 3.1]{apptoSHLvv} for the details specific to our situation here or to \cite[Section 18]{sh} for a more general and more detailed discussion of the adelic Fourier coefficients of Eisenstein series.  In {\em loc. cit.} and \cite{shconfluent}, Shimura also computed local Fourier coefficients for these Eisenstein series (for specific choices of $f_v$), which one can use to determine the ring of definition of the Eisenstein series.  For other purposes, such as constructing $p$-adic families of Eisenstein series and $p$-adic $L$-functions (like in \cite{apptoSHL, apptoSHLvv, HELS}), one needs different choices at primes dividing $p$ from those chosen by Shimura, and determining them constitutes a significant step in the $p$-adic situation.  We also note that Shimura's rationality results for Eisenstein series were further extended by Harris in \cite{harrisbirkhauser, harrisannals}.
\end{rmk}

\begin{rmk}
At the risk of opening a can of worms, we briefly elaborate on details for the statement of Theorem \ref{prop:sh1810}.  The reader wishing to delve still further into this subject after reading this remark, though, should heed Remark \ref{rmk:Fexpnsrmk}.  While the equation in Theorem  \ref{prop:sh1810} is formatted to highlight connections with earlier material, the longer-form version of the equation is
\begin{align*}
E_f\left(\begin{pmatrix}1& m \\ 0&1\end{pmatrix}\begin{pmatrix}{ }^t\bar{h}^{-1} & 0\\ 0& h\end{pmatrix}\right) = \sum_{\beta\in\hern(\cmfield)} c_\beta(h)\mathbf{e}_{\adeles_{\realfield}}(\tr(\beta m)),
\end{align*}
for each $h = (h_v)_v\in \GL_n\left(\adeles_\cmfield\right)$ and $m\in \hern(\adeles_\cmfield)$, with $c_\beta(h)$ a complex number that depends only on $f$, $\beta$, and $h$.  Here, $\mathbf{e}_{\adeles_{\realfield}}((x)) = \prod_v\mathbf{e}_v(x_v)$ for each adele $x = (x_v)_v$, $\mathbf{e}_v(x_v) = e^{2\pi i x_v}$ if $v\divides \infty$, and $\mathbf{e}_v(x_v) = e^{-2\pi i y}$ with $y\in\IQ$ such that $\tr_{\realfield_v/\IQ_q}(x_v)-y\in\ZZ_p$ at each place $v$ over a rational prime $q$.  The product $c_\beta(h)$ then is given as a rational multiple of the product $\prod_v c_{\beta, v}(h)$, with $c_{\beta, v}$ the {\em Fourier transform} of $f_v$:
\begin{align*}
c_{\beta, v}(h):= \int_{\hern(\cmfield\otimes \realfield_v)}f_v\left(\begin{pmatrix}0 & -1\\ 1 & 0\end{pmatrix}\begin{pmatrix} 1 & m_v\\ 0 & 1
\end{pmatrix}\begin{pmatrix}{ }^t \bar{h}_v^{-1} & 0\\ 0 & h_v\end{pmatrix}\right)\mathbf{e}_v\left(-\tr \beta m_v\right)dm_v.
\end{align*}
\end{rmk}
It turns out that the numbers $c_{\beta, v}$ are completely determined by their values at $1$.  In the case of modular forms of weight $2k$ and level $1$, the sections $f_v$ can be chosen so that for $n\geq 1$, $c_{n, \infty}(1) = n^{2k-1}$ and $c_{n, p}(1) = \sum_{j=0}^{\ord_p(n)}p^{-j(2k-1)}$, so $c_n(1) = n^{2k-1}\prod_{p}\sum_{j=0}^{\ord_p(n)}p^{-j(2k-1)} = \sum_{d\divides n}d^{2k-1},$ the usual divisor function occurring in the Fourier expansion of the weight $2k$, level $1$ Eisenstein series.  For further details, the reader is again urged to consult the aforementioned references.

\bibliography{AWSbib}

\printindex
\end{document}